\newtheorem{theorem}{Theorem}[section]
\newtheorem{lemma}[theorem]{Lemma}
\newtheorem{prop}[theorem]{Proposition}
\newtheorem{cor}[theorem]{Corollary}
\newtheorem{definition}[theorem]{Definition}
\newtheorem{ques}[theorem]{Question}
\newtheorem{rmk}[theorem]{Remark}
\newcommand{\G}{{\Gamma}}
\newcommand{\Q}{{\mathbb Q}}
\newcommand{\R}{{\mathbb R}}
\newcommand{\Z}{{\mathbb Z}}
\newcommand{\N}{{\mathbb N}}
\newcommand{\hh}{{\mathcal H}}
\newcommand{\cd}{{\mathrm{cd}}}
\newcommand{\cdl}{{\mathrm{cd}_{(\infty)}}}
\newcommand{\vcd}{{\mathrm{vcd}}}
\newcommand{\Ind}{{\mathrm{Ind}_H^G}}
\newcommand{\sln}{{\mathrm{SL}}}
\title{Hyperbolicity and bounded-valued cohomology}
\author{Nansen Petrosyan}
\address{School of Mathematics, University of Southampton, Southampton SO17~1BJ, UK}
\email{n.petrosyan@soton.ac.uk}
\author{Vladimir Vankov}
\address{School of Mathematics, University of Bristol, Bristol BS8~1UG, UK}
\email{vlad.vankov@bristol.ac.uk}
\begin{document}

\begin{abstract} 
We generalise a theorem of Gersten on surjectivity of the restriction map in $\ell^{\infty}$-cohomology of groups. This leads to applications on subgroups of hyperbolic groups, quasi-isometric distinction of finitely generated groups and $\ell^{\infty}$-cohomology calculations for some well-known classes of groups. Along the way, we obtain hyperbolicity criteria for groups of type $FP_2(\Q)$ and for those satisfying a rational homological linear isoperimetric inequality, answering a question of Arora and Mart\'{i}nez-Pedroza. 
\end{abstract}

\maketitle

\section{Introduction}

Bounded-valued cohomology, also known as $\ell^{\infty}$-cohomology, is a quasi-isometry invariant of finitely generated groups. It  was introduced by Gersten as a tool to find lower bounds for the Dehn function of some finitely presented groups \cite{gersten_exotic,gersten_lowerbounds}. Later, Gersten found a complete characterisation of hyperbolic groups in terms of   $\ell^{\infty}$-cohomology \cite{cohomologicalcharacterisation}.

\begin{theorem}[Gersten]\label{intro_Gersten_cohom_char}
Suppose that $G$ is a finitely presented group. Then $G$ is hyperbolic if and only if  $H^2_{(\infty)}(G; \ell^{\infty}(\N, \R))=0$.
\end{theorem}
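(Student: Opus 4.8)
The plan is to convert the vanishing statement into a uniform bounded-primitive statement, then into a rational homological linear isoperimetric inequality, and finally to feed this into a hyperbolicity criterion. First I would fix a finite presentation of $G$ and compute $\ell^\infty$-cohomology from the complex of bounded cellular cochains $C^{*}_{(\infty)}(\widetilde X;V)$ on $\widetilde X$, the universal cover of a $K(G,1)$ with finite $2$-skeleton (such a model exists as $G$ is finitely presented); since $\widetilde X$ is contractible, $H_1(\widetilde X;\R)=0$ and every bounded $2$-cocycle on $\widetilde X$ is a coboundary over $\R$ through a possibly unbounded $1$-cochain. Unwinding the coefficients, a bounded $\ell^\infty(\N,\R)$-valued $2$-cocycle on $\widetilde X$ is exactly a uniformly bounded sequence $(z_n)_n$ of $\R$-valued $2$-cocycles, and it is a coboundary exactly when the $z_n$ admit \emph{uniformly} bounded primitives. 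A short argument --- if no uniform bound held, one could rescale witnesses and assemble them into a uniformly bounded sequence of cocycles with no uniformly bounded sequence of primitives, i.e.\ a nonzero class --- then gives the reformulation: $H^2_{(\infty)}(G;\ell^\infty(\N,\R))=0$ if and only if there is $C>0$ such that every bounded $2$-cocycle $z$ on $\widetilde X$ has a $1$-cochain $b$ with $\delta b=z$ and $\|b\|_\infty\le C\|z\|_\infty$.

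For the ``if'' direction I would assume this uniform bound. Given a finite $1$-cycle $c$ on $\widetilde X^{(1)}$, put $\mathrm{FA}_\R(c)=\inf\{\|u\|_1:u\in C_2(\widetilde X;\R),\ \partial u=c\}$; every such $c$ bounds, and in a finite subcomplex $Y$ containing $c$ and one of its fillings, finite-dimensional Hahn--Banach duality --- using $(\ker\partial_2^Y)^{\perp}=\{\delta_Y b:b\in C^1(Y;\R)\}$ and $\langle\delta_Y b,u\rangle=\langle b,\partial_2 u\rangle$ --- gives $\mathrm{FA}_\R^Y(c)=\max\{\,|\langle b,c\rangle|:b\in C^1(Y;\R),\ \|\delta_Y b\|_\infty\le 1\,\}$. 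For such a $b$, extend it by $0$ to $\widetilde X$; its coboundary is then a bounded $2$-cocycle $z$ with $\|z\|_\infty\le N$, where $N$ bounds the number of $1$-cells on a $2$-cell. The hypothesis produces $b'$ with $\delta b'=z$, $\|b'\|_\infty\le CN$; since $b-b'$ is a $1$-cocycle and $c$ a bounding $1$-cycle, $\langle b,c\rangle=\langle b',c\rangle$, so $|\langle b,c\rangle|\le CN\,|c|$. Thus $\mathrm{FA}_\R(c)\le CN|c|$ for all $c$, and as these linear programs have integral data, $\mathrm{FA}_\Q(c)=\mathrm{FA}_\R(c)$: the group $G$ satisfies a rational homological linear isoperimetric inequality. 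I would then invoke the hyperbolicity criterion that a group of type $FP_2(\Q)$ satisfying a rational homological linear isoperimetric inequality is hyperbolic --- the implication that underlies Gersten's characterisation and that is precisely the Arora--Mart\'{i}nez-Pedroza question addressed in this paper.

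For the converse, if $G$ is hyperbolic I would pass through bounded cohomology: $H^n_{(\infty)}(G;\ell^\infty(\N,\R))\cong H^n(G;\ell^\infty(G\times\N))$, and $\ell^\infty(G\times\N)=B(G,\ell^\infty(\N))$ is a relatively injective Banach $G$-module, so its bounded cohomology vanishes in positive degrees; Mineyev's theorem then makes the comparison map $H^2_b(G;\ell^\infty(G\times\N))\to H^2(G;\ell^\infty(G\times\N))$ surjective, forcing the target --- hence $H^2_{(\infty)}(G;\ell^\infty(\N,\R))$ --- to vanish. (Equivalently, the uniformly bounded primitives can be produced directly from bounded homotopy operators on the Rips complex of a hyperbolic group.) I expect the main obstacle to be that hyperbolicity criterion at the end of the ``if'' direction: bounded $1$-cochains pair only with \emph{homological} fillings, which may be vastly more efficient than combinatorial van Kampen fillings, so upgrading a merely homological linear bound to a linear Dehn function --- to thin triangles and genuine hyperbolicity --- is the delicate point; the sequence-versus-uniform reformulation, the Hahn--Banach duality, the $\Q$-versus-$\R$ comparison, and the converse are all routine.
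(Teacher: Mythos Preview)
Your overall architecture matches both Gersten's original argument and the paper's generalisation (Theorem~\ref{Gersten_l_hyp}): establish ``strong vanishing'' (uniform bounded primitives) from the $\ell^\infty$-coefficient hypothesis, dualise via Hahn--Banach to a linear bound on real homological fillings, and then feed this into a hyperbolicity criterion. The converse via Mineyev's surjectivity of the comparison map is exactly how the paper handles it (citing \cite{mineyev00}). So the strategy is correct and close to the paper's.

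There is, however, a genuine gap in your Hahn--Banach step. You take $b\in C^1(Y;\R)$ with $\|\delta_Y b\|_\infty\le 1$, extend by zero to $\widetilde X$, and assert that the resulting $z=\delta b$ satisfies $\|z\|_\infty\le N$. This fails on $2$-cells $\sigma\notin Y$ that have some boundary edges in $Y$: there $(\delta b)(\sigma)=\sum_{e\in\partial\sigma\cap Y}\pm b(e)$, and nothing in the constraint $\|\delta_Y b\|_\infty\le 1$ bounds $\|b\|_{\infty}$ on $Y$ (indeed $b$ could be a large $1$-cocycle on $Y$). So the extended cochain need not have bounded coboundary, and you cannot invoke the uniform-primitive hypothesis.

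The fix is to avoid the finite subcomplex altogether and run the duality globally on $\widetilde X$, which is precisely what Gersten does and what the paper follows in the proof of Theorem~\ref{Gersten_l_hyp}: take $S$ to be the set of linear functionals on $B_1(\widetilde X;\R)$ of operator norm $1$ with respect to the filling norm $\|\cdot\|_\R$, form the single $\ell^\infty(S,\R)$-valued $2$-cocycle $\sigma\mapsto(g\mapsto g(\partial\sigma))$, and apply strong vanishing once. This yields $\|c\|_\R\le M|c|_1$ directly, without any extension. Equivalently, in your language, for any $b\in C^1(\widetilde X;\R)$ with $\|\delta b\|_\infty\le 1$ the cocycle $z=\delta b$ is already globally bounded by $1$, the hypothesis produces $b'$ with $\|b'\|_\infty\le C$, and $\langle b,c\rangle=\langle b',c\rangle$ gives the bound. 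A minor further difference: you close by invoking the $FP_2(\Q)$ rational-filling criterion (Theorem~\ref{intro_thm:answer_AMP}), whereas the paper goes straight to Theorem~\ref{thm:mineyev}; since the former is proved via the latter, this is only a detour.
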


\noindent Recently, Milizia in \cite{Milizia}, extended this characterisation to relatively hyperbolic groups, and also gave a characterisation of amenable groups using the comparison map from ordinary group cohomology to $\ell^{\infty}$-cohomology. 
Another result of Gersten concerns the surjectivity of the restriction homomorphism  in $\ell^{\infty}$-cohomology of groups \cite[Theorem 6.3]{cohomologicalcharacterisation}.

\begin{theorem}[Gersten]\label{intro_cor_i_inft_Gersten}  Let $G$ be the fundamental group of a finite aspherical $3$-complex such that $\cd \, G=2$. Suppose $H$ is a finitely presented subgroup of $G$. Then the restriction homomorphism $H^2_{(\infty)}(G; V)\to H^2_{(\infty)}(H; V)$ is surjective  for all normed $\Z$-modules $V$.
\end{theorem}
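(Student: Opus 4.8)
The plan is to convert the statement into a purely algebraic surjectivity and then construct a preimage cocycle by hand. Recall that for a group $\G$ of type $FP_2$ (in particular for $G$, and for any finitely presented $H$) there is a natural identification $H^2_{(\infty)}(\G;V)\cong H^2(\G;\ell^\infty(\G,V))$, under which the restriction map factors as
$$H^2\!\left(G;\ell^\infty(G,V)\right)\xrightarrow{\ \operatorname{res}\ }H^2\!\left(H;\operatorname{Res}^G_H\ell^\infty(G,V)\right)\longrightarrow H^2\!\left(H;\ell^\infty(H,V)\right),$$
the second arrow being induced by the $\Z H$-linear map $f\mapsto f|_H$. This last map is a split epimorphism of $\Z H$-modules, a $\Z H$-equivariant section being $\psi\mapsto(\psi$ on $H$, $0$ off $H)$; hence it is onto on $H^2$, and it suffices to prove that the algebraic restriction $\operatorname{res}$ above is surjective.

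Next I would exploit the hypotheses on $G$. Being the fundamental group of a finite aspherical complex, $G$ is of type $F$, and since $\cd G=2$ the cellular chain complex of $\widetilde X$ is $\Z G$-chain homotopy equivalent to a length-two resolution $0\to D_2\to D_1\to D_0\to\Z\to 0$ by finitely generated projective $\Z G$-modules (one splits off the acyclic subcomplex supported in degrees $2,3$). Likewise $H$, being finitely presented with $\cd H\le 2$, is of type $FP$ and hence of type $F$ of dimension $\le 3$, so its chain complex is $\Z H$-chain equivalent to a length-two resolution. Consequently both $H^2_{(\infty)}(G;V)$ and $H^2_{(\infty)}(H;V)$ become \emph{cokernels} of maps between the degree-$\le 2$ terms --- there is no cocycle condition to impose --- and, crucially, $H^3_{(\infty)}(G;V)=0$; in fact there is a bounded $\Z G$-linear cochain contraction annihilating degree $3$, so every bounded $3$-cochain on $\widetilde X$ is $d$ of a bounded $2$-cochain with uniformly controlled support. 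Using $\cd G=2$ one may moreover choose a presentation of $G$ containing a finite presentation of $H$ and realise it as the $2$-skeleton of a finite aspherical $3$-complex; then the universal cover $\widetilde Q$ of the corresponding subcomplex sits $H$-equivariantly inside $\widetilde X$ as a subcomplex (after adding cells to make $\widetilde Q$ highly connected and extending the inclusion over the contractible $\widetilde X$).

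Finally, given a bounded $2$-cocycle $\alpha$ on $\widetilde Q$ representing a class in $H^2_{(\infty)}(H;V)$, extend it by $0$ on the $2$-cells outside $\widetilde Q$ to a bounded $2$-cochain $\Xi_0$ on $\widetilde X$; then $\widetilde f^*\Xi_0=\alpha$ exactly, and $\theta:=d\Xi_0$ is a bounded $3$-cochain which, because $\alpha$ is a cocycle, vanishes on every $3$-cell of $\widetilde X$ all of whose faces lie in $\widetilde Q$ --- so $\theta$ is supported in a bounded collar of $\widetilde Q$. It remains to correct $\Xi_0$ to an honest bounded cocycle $\Xi$ whose pull-back to $\widetilde Q$ is still cohomologous to $\alpha$, equivalently to produce a bounded primitive $\eta$ of $\theta$ that vanishes on $\widetilde Q$ (equivalently still, to kill the class of $\theta$ in $H^3_{(\infty)}(\widetilde X,\widetilde Q;V)$, which by $H^3_{(\infty)}(\widetilde X;V)=0$ is the cokernel of $\widetilde f^*$ on $H^2$). \textbf{This last step is the main obstacle.} The bounded contraction of the previous paragraph only yields a primitive of $\theta$ supported near the collar of $\widetilde Q$, and since $H$ need not be quasi-isometrically embedded in $G$ that collar may be geometrically wild, so such $\eta$ need not vanish on $\widetilde Q$. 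Closing this gap requires a finer construction of the primitive that uses the special structure of $\ell^\infty$-coefficients --- a flabbiness/averaging argument over the coset space $H\backslash G$, in the spirit of Gersten's original proof --- rather than merely the vanishing $H^3_{(\infty)}(\widetilde X;V)=0$; it is here that the finiteness and cohomological-dimension hypotheses on $G$ are used in full strength. Once the primitive is obtained, $\Xi:=\Xi_0-\eta$ restricts to $\alpha$, so $\operatorname{res}$ is surjective, and composing with the retraction onto $\ell^\infty(H,V)$ gives the theorem.
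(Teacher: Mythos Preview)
Your reduction in the first paragraph is sound, but the core of the argument --- the ``extend by zero and correct'' step --- is left unfinished, as you yourself flag. The obstacle is genuine: knowing $H^3_{(\infty)}(G;V)=0$ provides \emph{some} bounded primitive $\eta$ of $\theta=d\Xi_0$, but nothing in your outline forces $\eta|_{\widetilde Q}$ to be trivial (or even a bounded coboundary), and the averaging over $H\backslash G$ you allude to is not carried out. Without that, the argument does not close; and your claim that $\theta$ is supported in a ``bounded collar'' of $\widetilde Q$ is also not justified, since $H$ need not be quasiconvex.

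The paper (following Gersten, and in the generality of Theorem~\ref{l_inf_gen}) avoids this difficulty by working on the \emph{chain} side rather than the cochain side. One embeds a finite free $\Z H$-resolution $P_*\twoheadrightarrow\Z$ into a free $\Z G$-resolution $F_*\twoheadrightarrow\Z$ so that the cycle module $Z_1(F)$ is free (Proposition~\ref{compare}); the hypotheses that $G$ has a finite aspherical $3$-complex with $\cd G=2$ make $Z_1(F)$ finitely generated free, so its $\ell^1$-norm and its standard norm are equivalent. The quotient $Z_1(F)/Z_1(P)$ is $\Z H$-projective, so the sequence $0\to Z_1(P)\to Z_1(F)\to Z_1(F)/Z_1(P)\to 0$ splits, and Proposition~\ref{free_split} upgrades the splitting to a \emph{bounded} retraction $\rho:Z_1(F)\to Z_1(P)$ with $\|\rho(y)\|\le C|y|_1$. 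Now surjectivity is one line: given $\alpha\in Hom_\Z(Z_1(P),V)$ bounded on $H$-orbits, set $\beta:=\alpha\circ\rho$. Then $\iota^*\beta=\alpha$ since $\rho\circ\iota=\mathrm{id}$, and $\beta$ is bounded on $G$-orbits by the estimate in the proof of Theorem~\ref{l_inf_gen}. No cocycle condition needs checking in the top degree of a truncated resolution, and no correction step is required. Your scheme is in a sense dual --- you extend the cocycle and try to kill the error --- but the control that the bounded retraction supplies on chains is exactly what your cochain-side argument lacks.
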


\noindent By combining Theorems \ref{intro_Gersten_cohom_char} and \ref{intro_cor_i_inft_Gersten}, Gersten in \cite{cohomologicalcharacterisation} obtained a new proof of his remarkable theorem which states that a finitely presented subgroup of a hyperbolic group $G$ with $\cd \, G=2$ is hyperbolic \cite{Gersten_cd}.

Our main result is the following generalisation of Theorem \ref{intro_cor_i_inft_Gersten}.

 \begin{theorem}\label{intro_l_inf_gen} Let $R$ be a subring of $\Q$ containing $\Z$. Suppose $H$ is a  group of type $FP_n(R)$ and is a subgroup of $G$ with $\cd_R \, G = n<\infty$. Then the restriction homomorphism $H^n_{(\infty)}(G; V)\to H^n_{(\infty)}(H; V)$ is surjective for all normed $R$-modules $V$.
\end{theorem}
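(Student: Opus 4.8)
The plan is to reproduce the architecture of Gersten's proof of Theorem~\ref{intro_cor_i_inft_Gersten}, replacing the geometric hypothesis by the algebraic one and carrying the norms through every step. A useful preliminary observation is that the hypotheses already force $H$ to be of type $FP(R)$: since $H\le G$ one has $\cd_R H\le \cd_R G=n$, so in a partial resolution of $R$ by finitely generated projective $RH$-modules of length $n$ (available because $H$ is of type $FP_n(R)$) the top kernel has vanishing higher $\operatorname{Ext}$, hence is projective, and it is finitely generated by the $FP_n$ hypothesis; thus $H^{*}_{(\infty)}(H;-)$ is computed by a finite resolution of $R$ by finitely generated projective $RH$-modules of length $n$, whereas $H^{*}_{(\infty)}(G;-)$ can in general only be computed from a (possibly infinitely generated) projective $RG$-resolution of $R$ of length $n$.

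Next I would set up a normed (bounded) analogue of Shapiro's lemma: for a normed $R$-module $V$ one builds a normed $RG$-module $\operatorname{Ind}_H^G V$ by (co)induction, together with a natural isomorphism $H^{k}_{(\infty)}(H;V)\cong H^{k}_{(\infty)}(G;\operatorname{Ind}_H^G V)$ and a homomorphism of normed $RG$-modules $V\to\operatorname{Ind}_H^G V$ under which the restriction map becomes the map $H^{k}_{(\infty)}(G;V)\to H^{k}_{(\infty)}(G;\operatorname{Ind}_H^G V)$ induced on $\ell^\infty$-cohomology of $G$. Feeding the short exact sequence of normed $RG$-modules attached to $V\to\operatorname{Ind}_H^G V$ into the long exact sequence in $\ell^\infty$-cohomology of $G$, surjectivity of the restriction map in degree $n$ is reduced to the vanishing of an $\ell^\infty$-cohomology group of $G$ in degree $n$ of the quotient (or kernel) module in that sequence. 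Here $\cd_R G=n$ does the work: a length-$n$ projective resolution shows that the $\ell^\infty$-cohomology of $G$ vanishes above degree $n$ and identifies its degree-$n$ part with a connecting term of strictly smaller homological complexity, which — together with $\cd_R H\le n$ and the Shapiro identification above, now valid in every degree since both groups have finite cohomological dimension — yields the required vanishing, and hence surjectivity for all normed $V$.

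The homological skeleton above is formal; I expect the genuine difficulty to be the \emph{bounded-ness bookkeeping}. Because $G$ need not be of finite type, one must compare an infinitely generated length-$n$ resolution of $G$ with the finite resolution of $H$, and the whole argument hinges on knowing that the comparison chain maps, their homotopies, and the contracting homotopies realising the Shapiro isomorphism and the vanishing can be chosen with \emph{uniformly bounded coefficients}, so that they actually act on the bounded cochain complexes rather than merely on the algebraic ones. This normed refinement of the fact that induction and coinduction are exact and satisfy Shapiro's lemma is the technical heart of the theorem, and it is exactly the point that uses ``$H$ is of type $FP_n(R)$'' rather than merely ``$\cd_R H\le n$''; in Gersten's Theorem~\ref{intro_cor_i_inft_Gersten} it is hidden, because there $G$ is itself of type $F$.
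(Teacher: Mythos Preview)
Your plan has two genuine gaps, and neither is merely ``boundedness bookkeeping''.

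\textbf{The Shapiro step does not land in $\ell^\infty$-cohomology.} Ordinary Shapiro gives
\[
H^*_{(\infty)}(H;V)=H^*(H;\ell^\infty(H,V))\cong H^*\bigl(G;\operatorname{Coind}_H^G\ell^\infty(H,V)\bigr),
\]
but $\operatorname{Coind}_H^G\ell^\infty(H,V)$ consists of \emph{all} functions on a transversal $T$ of $H\backslash G$ with values in $\ell^\infty(H,V)$, whereas $\ell^\infty(G,W)$ consists only of the \emph{bounded} ones. When $[G:H]=\infty$ these differ, so there is no normed $R$-module $W$ with $H^*_{(\infty)}(H;V)\cong H^*_{(\infty)}(G;W)$, and the restriction map is not induced by a map of normed coefficients $V\to W$ as you assume.

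\textbf{The long-exact-sequence reduction is in the wrong direction.} Even if one had a short exact sequence $0\to V\to W\to Q\to 0$ with $H^n_{(\infty)}(G;W)\cong H^n_{(\infty)}(H;V)$, the vanishing $H^{n+1}_{(\infty)}(G;V)=0$ supplied by $\cd_R G=n$ only makes $H^n_{(\infty)}(G;W)\to H^n_{(\infty)}(G;Q)$ surjective; for surjectivity of $H^n_{(\infty)}(G;V)\to H^n_{(\infty)}(G;W)$ you need $H^n_{(\infty)}(G;Q)=0$, and ``smaller homological complexity'' is not an argument for this. Indeed the analogous statement in \emph{ordinary} cohomology is false (e.g.\ $G=\Z$, $H=2\Z$, $M=\Z/2$, $n=1$: restriction $H^1(G;M)\to H^1(H;M)$ is zero between two nonzero groups), so no purely formal homological skeleton can succeed---the argument must use something specific to $\ell^\infty$-cohomology.

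\textbf{What the paper does instead.} The key is a new description of $\ell^\infty$-cohomology (Lemma~\ref{lem:equiv_infty}): for \emph{any} projective $RG$-resolution $P_*\to R$, $H^*_{(\infty)}(G;V)$ is the cohomology of the complex $\operatorname{Hom}^{bG}_R(P_*,V)$ of $R$-linear (not $RG$-linear!) cochains that are bounded on $G$-orbits. One then builds an embedded pair of resolutions $P_*\hookrightarrow F_*$ with $P_*$ a finitely generated free $RH$-resolution and $Z_{n-1}(F)$ a free $RG$-module, and obtains an $RH$-linear retraction $\rho:Z_{n-1}(F)\to Z_{n-1}(P)$ which is \emph{bounded} in the sense $\|\rho(y)\|\le C|y|_1$ (this is Gersten's Proposition~\ref{free_split}, which needs $Z_{n-1}(P)$ finitely generated---one place $FP_n(R)$ enters). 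Given a cocycle $\alpha\in\operatorname{Hom}^{bH}_R(Z_{n-1}(P),V)$, set $\beta=\alpha\circ\rho$. Then $\iota^*\beta=\alpha$, and the point is that $\beta$, though only $RH$-linear, is bounded on $G$-orbits: $G$-invariance of the $\ell^1$-norm on $Z_{n-1}(F)$ plus boundedness of $\rho$ plus the \emph{finite} generating set of $P_n$ (the second place $FP_n(R)$ enters) give a uniform bound on $\|\beta(g\sigma)\|_V$. This direct construction of a lift is exactly what exploits the weaker equivariance required of $\ell^\infty$-cochains and has no analogue in ordinary cohomology; it is not a Shapiro/long-exact-sequence argument at all.
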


\noindent Here, $\cd_R \, G$ stands for the {\it cohomological dimension} of a group $G$ over a ring $R$.  When $R=\Z$, we omit it from the notation and simply write $\cd \, G$. This notion and properties $F$, $FP(R)$, $FP_n(R)$ and $FP_{\infty}(R)$ are reviewed is Section \ref{sec:prelim}.

\noindent Theorem \ref{intro_l_inf_gen} generalises Theorem \ref{intro_cor_i_inft_Gersten} in several ways: by allowing torsion in $G$, by removing the finiteness assumption on $G$, and by extending to higher cohomological dimensions. We can then apply Theorem \ref{intro_l_inf_gen} to subgroups of hyperbolic groups of type $F$, or more generally of type $FP(\Q)$.

 A major tool in the proof of Theorem \ref{intro_l_inf_gen} is our new characterisation of  $\ell^{\infty}$-cohomology of a group $G$ as the cohomology of orbitally-bounded cochains on any projective resolution of $R$ over the group ring $RG$. The freedom to interpolate between different projective resolutions allows us to also generalise Gersten's Theorem \ref{intro_Gersten_cohom_char}, from finitely presented groups, to groups of type $FP_2(\Q)$. Together, they lead to applications such as Corollary \ref{cor_sub_dim_2}  below.

\begin{cor}\label{intro_cor_i_inft}
Let $G$ be a hyperbolic group which is not virtually free. Suppose $H$ is a subgroup of type $F$. Then 
$H^n_{(\infty)}(H; V)=0$  for all normed $\R$-modules $V$, where $n=\cd_{\Q} \,G$.
\end{cor}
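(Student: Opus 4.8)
The plan is to derive the corollary from Theorem~\ref{intro_l_inf_gen} by feeding it the vanishing of the $\ell^{\infty}$-cohomology of the ambient hyperbolic group in all degrees at least $2$.

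\textbf{Reductions.} Since $G$ is hyperbolic it acts properly and cocompactly on a Rips complex, so $\vcd G<\infty$ and hence $n:=\cd_{\Q}G$ is finite; and since $G$ is not virtually free we have $n\ge 2$, because a finitely generated group of rational cohomological dimension at most $1$ is virtually free. As $H$ is of type $F$ it is in particular of type $FP_{n}(\Q)$, and every normed $\R$-module is a normed $\Q$-module by restriction of scalars along $\Q\hookrightarrow\R$. Applying Theorem~\ref{intro_l_inf_gen} with $R=\Q$, the restriction homomorphism
\[
H^{n}_{(\infty)}(G;V)\longrightarrow H^{n}_{(\infty)}(H;V)
\]
is surjective for every normed $\R$-module $V$. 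Consequently it suffices to prove $H^{n}_{(\infty)}(G;V)=0$ for every normed $\R$-module $V$.

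\textbf{Vanishing on $G$.} For the base case $n=2$, since $G$ is hyperbolic and of type $FP_{2}(\Q)$, the generalised form of Gersten's characterisation (our extension of Theorem~\ref{intro_Gersten_cohom_char} to groups of type $FP_{2}(\Q)$) gives $H^{2}_{(\infty)}(G;\ell^{\infty}(\N,\R))=0$. One then upgrades this to an arbitrary normed $\R$-module $V$ by embedding $V$ isometrically into $\ell^{\infty}(S,\R)$ for a suitable index set $S$ and using that the $\ell^{\infty}$-cohomology of $G$ vanishes in positive degrees on $\ell^{\infty}$-induced (``relatively injective'') modules, so the associated long exact sequences force $H^{2}_{(\infty)}(G;V)=0$. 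For $n>2$ one propagates the vanishing upward by dimension shifting: embed $V$ into the $\ell^{\infty}$-acyclic module $\ell^{\infty}(G,V)$, producing a short exact sequence of normed $\R$-modules whose long exact sequence identifies $H^{n}_{(\infty)}(G;V)$ with $H^{n-1}_{(\infty)}(G;-)$ of the quotient, and iterate down to degree $2$. (Equivalently, one may invoke that a hyperbolic group satisfies a homological linear isoperimetric inequality in every dimension and that such an inequality in dimension $k$ forces $H^{k}_{(\infty)}(G;V)=0$ for all normed $V$.) Combining the two steps gives $H^{n}_{(\infty)}(G;V)=0$, and surjectivity of the restriction map then yields $H^{n}_{(\infty)}(H;V)=0$, as claimed.

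The corollary itself is a short deduction; the real content lies in the two ingredients it invokes, and the main obstacle is establishing those. The key enabler is the paper's description of $\ell^{\infty}$-cohomology as the cohomology of orbitally-bounded cochains on an \emph{arbitrary} projective resolution of $R$ over $RG$: this flexibility is what simultaneously lets one run Gersten's hyperbolicity argument under the weaker hypothesis $FP_{2}(\Q)$ (so that $G$ may have torsion and need not be finitely presented) and push the vanishing into the top degree $n=\cd_{\Q}G$, where a finite-length resolution is available. Carrying out the degree-$2$ characterisation for $FP_{2}(\Q)$ groups, and verifying that the $\ell^{\infty}$-acyclicity and long-exact-sequence formalism behaves exactly as in Gersten's finitely presented setting, is where I expect the main effort to go; once those are in place, the present corollary follows as above.
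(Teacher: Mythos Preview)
Your overall strategy is exactly the paper's: apply Theorem~\ref{intro_l_inf_gen} with $R=\Q$ to reduce to showing $H^{n}_{(\infty)}(G;V)=0$, and then use hyperbolicity of $G$ for the latter. The paper's proof is in fact two lines (see Corollary~\ref{cor_i_inft}): it simply cites Mineyev \cite{mineyev00} for the vanishing $H^{n}_{(\infty)}(G;V)=0$ for all $n\ge 2$ and all normed $\R$-modules $V$, which is precisely your parenthetical alternative about homological linear isoperimetric inequalities in all dimensions.

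Where your write-up overreaches is in the attempted self-contained derivation of that vanishing. The dimension-shifting step is not justified as stated: you assert that $\ell^{\infty}(G,V)$ is $\ell^{\infty}$-acyclic and that a short exact sequence $0\to V\to \ell^{\infty}(G,V)\to Q\to 0$ of normed modules induces a long exact sequence in $\ell^{\infty}$-cohomology. Neither claim is available for free. Since $H^{*}_{(\infty)}(G;W)=H^{*}(G;\ell^{\infty}(G,W))$, exactness would require $0\to \ell^{\infty}(G,V)\to \ell^{\infty}(G,\ell^{\infty}(G,V))\to \ell^{\infty}(G,Q)\to 0$ to be short exact, which in turn needs a bounded linear section of $\ell^{\infty}(G,V)\twoheadrightarrow Q$; you have not provided one. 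Likewise, $\ell^{\infty}(G,V)$ is not the full coinduced module $\mathrm{Maps}(G,V)$, so the usual acyclicity argument for coinduced modules does not apply, and there is no reason to expect $H^{k}_{(\infty)}(G;\ell^{\infty}(G,V))$ to vanish for a general group. These gaps are not easily patched, and the paper avoids them entirely by invoking \cite{mineyev00}; I would recommend you do the same and drop the dimension-shifting paragraph.
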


\begin{cor}\label{cor_sub_dim_2} Let $H$ be a subgroup of a hyperbolic group $G$ such that $\cd_{\Q} \,G = 2$. Then $H$ is hyperbolic if and only if $H$ is of type $FP_2(\Q)$.
\end{cor}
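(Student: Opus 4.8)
The plan is to combine Theorem \ref{intro_l_inf_gen} with the extension of Gersten's hyperbolicity criterion (Theorem \ref{intro_Gersten_cohom_char}) from finitely presented groups to groups of type $FP_2(\Q)$. The forward implication is standard: a hyperbolic group is finitely presented, hence of type $FP_2(\Q)$.

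For the converse, assume $H$ is of type $FP_2(\Q)$ and $H \leq G$ with $G$ hyperbolic and $\cd_{\Q} G = 2$. By the $FP_2(\Q)$-version of Theorem \ref{intro_Gersten_cohom_char}, it suffices to show that $H^2_{(\infty)}(H; \ell^{\infty}(\N, \R)) = 0$. Regard $\ell^{\infty}(\N, \R)$ as a normed $\Q$-module by restriction of scalars. Applying Theorem \ref{intro_l_inf_gen} with $R = \Q$ (a subring of $\Q$ containing $\Z$) and $n = 2$ --- valid since $H$ is of type $FP_2(\Q)$ and $\cd_{\Q} G = 2$ --- the restriction homomorphism
\[
H^2_{(\infty)}(G; \ell^{\infty}(\N, \R)) \longrightarrow H^2_{(\infty)}(H; \ell^{\infty}(\N, \R))
\]
is surjective. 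Since $G$ is hyperbolic and therefore finitely presented, Gersten's Theorem \ref{intro_Gersten_cohom_char} gives $H^2_{(\infty)}(G; \ell^{\infty}(\N, \R)) = 0$, so the target vanishes as well; the $FP_2(\Q)$-criterion then yields that $H$ is hyperbolic.

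The whole argument is a short splice of two substantial results, so there is no genuine obstacle inside this proof itself; the only points requiring care are bookkeeping ones. First, one must check that $\ell^{\infty}(\N, \R)$ is legitimately a normed module over $\Q$, so that Theorem \ref{intro_l_inf_gen} applies with $R = \Q$. Second, Theorem \ref{intro_l_inf_gen} demands that $n$ equal $\cd_{\Q} G$ exactly, and this is precisely the standing hypothesis $\cd_{\Q} G = 2$; there is no need to control $\cd_{\Q} H$, which in any case satisfies $\cd_{\Q} H \leq \cd_{\Q} G = 2$. If one prefers to avoid invoking finite presentability of $G$, one may instead apply the $FP_2(\Q)$-version of Theorem \ref{intro_Gersten_cohom_char} directly to $G$, since every hyperbolic group is of type $FP_2(\Q)$.
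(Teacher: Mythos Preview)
Your proposal is correct and follows essentially the same route as the paper: surjectivity of the restriction map in degree $2$ via Theorem~\ref{intro_l_inf_gen} with $R=\Q$, vanishing of $H^2_{(\infty)}(G;\ell^\infty(\N,\R))$ for the hyperbolic group $G$, and then the $FP_2(\Q)$ version of Gersten's criterion (Theorem~\ref{Gersten_l_hyp}) applied to $H$. The paper's proof is the same chain of implications, compressed into two sentences.
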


\noindent Weaker versions of Corollary \ref{cor_sub_dim_2} were previously established by Gersten when $\cd \,G=2$  and $H$ is of type $FP_2(\Z)$  \cite{Gersten_cd}, and  more recently by Arora and Mart\'{i}nez-Pedroza \cite{rational} when $\cd_{\Q} \,G=2$ and $H$ is finitely presented. Gersten's result implied that homological coherence and coherence are equivalent for hyperbolic groups of cohomological dimension two. Corollary \ref{cor_sub_dim_2} strengthens this both in the direction of the hyperbolic group and the subgroup: for hyperbolic groups of rational cohomological dimension two, rational homological coherence is equivalent to coherence.

In \cite{Rips}, Rips constructed the first examples of finitely generated subgroups of hyperbolic (small cancellation) groups that are not finitely presented and hence not hyperbolic (see Theorem \ref{rips_const}). Next corollary shows that these subgroups are not even of type $FP_2(\Q)$.

\begin{cor}\label{cor:ripscor}
Given any finitely presented group $Q$, let
$$1\to N\to G\xrightarrow{q} Q\to 1$$
be the Rips construction \cite[Theorem 1]{Rips}, for G being $C'(1/6)$, with $N$ finitely generated. Suppose $P$ is a finitely generated subgroup of $Q$ which is not finitely presented. Then $q^{-1}(P)$ is finitely generated but not of type $FP_2(\Q)$.

\end{cor}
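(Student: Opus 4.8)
The plan is to derive Corollary~\ref{cor:ripscor} by combining the structure of the Rips construction with Corollary~\ref{cor_sub_dim_2}. First I would recall that in the Rips construction with $G$ a $C'(1/6)$ small cancellation group, one has $\cd_{\Q} G = 2$: indeed $G$ is a torsion-free (or virtually torsion-free, and in any case $\Q$-acyclic in dimension $\geq 3$) group given by a finite presentation whose presentation $2$-complex is aspherical by small cancellation theory, so $\cd G = 2$ and a fortiori $\cd_{\Q} G \leq 2$; it is not free since $N$ is a nontrivial finitely generated normal subgroup of infinite index (as $Q$ is infinite whenever $P$ is an infinite non-finitely-presented subgroup), so $\cd_{\Q} G = 2$ exactly, and moreover $G$ is not virtually free. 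Next I would observe that $q^{-1}(P)$ is finitely generated: it is an extension $1 \to N \to q^{-1}(P) \to P \to 1$ with $N$ finitely generated and $P$ finitely generated, so $q^{-1}(P)$ is finitely generated.

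The core step is then a proof by contradiction. Suppose $q^{-1}(P)$ were of type $FP_2(\Q)$. Since $q^{-1}(P)$ is a subgroup of the hyperbolic group $G$ with $\cd_{\Q} G = 2$, Corollary~\ref{cor_sub_dim_2} applies and forces $q^{-1}(P)$ to be hyperbolic. A hyperbolic group is of type $F$ (in particular finitely presented), so $q^{-1}(P)$ is finitely presented. Now I would use the fact that $N$ is finitely presented — indeed $N$ is finitely \emph{generated} by hypothesis, and in the Rips construction $N$ is generated by the two stable letters, but the key point I actually need is different: I need to transfer finite presentability of $q^{-1}(P)$ down to $P$. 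Since $N$ is normal in $q^{-1}(P)$ with quotient $P$, and $N$ is finitely generated, a standard fact (e.g.\ the argument that a quotient of a finitely presented group by the normal closure of finitely many elements is finitely presented) gives that $P \cong q^{-1}(P)/N$ is finitely presented. This contradicts the hypothesis that $P$ is not finitely presented, completing the proof.

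The step I expect to be the main obstacle is the descent of finite presentability from $q^{-1}(P)$ to $P = q^{-1}(P)/N$. This is valid precisely because $N$ is finitely generated: if $q^{-1}(P)$ is finitely presented and $N$ is the normal closure in $q^{-1}(P)$ of a finite set, then the quotient is finitely presented. Here $N$ is finitely generated as an abstract group, but since $N \trianglelefteq q^{-1}(P)$ it is in particular the normal closure of that finite generating set, so the hypothesis is exactly what is needed. One should double-check that "$N$ finitely generated" in the Rips construction hypothesis is used in exactly this way and that no subtlety about $N$ being finitely generated versus normally finitely generated in the larger group intervenes — it does not, since any finite generating set of $N$ also normally generates $N$ inside $q^{-1}(P)$. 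A minor point to verify carefully is that $Q$, hence $P$, being infinite is automatic: a finitely generated group that is not finitely presented is infinite, so $P$ is infinite, hence $q^{-1}(P)$ has infinite image in $Q$ and $N$ has infinite index, which is what pins down $\cd_{\Q} G = 2$ and non-virtual-freeness and lets Corollary~\ref{cor_sub_dim_2} bite.
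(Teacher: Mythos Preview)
Your proposal is correct and follows essentially the same route as the paper: show $q^{-1}(P)$ is finitely generated via the extension by $N$, note that $G$ is hyperbolic with $\cd_\Q G \le 2$ (the paper simply cites Ol'shanski\u{\i} for this rather than arguing via asphericity of the presentation complex), and then apply Corollary~\ref{cor_sub_dim_2} to deduce that if $q^{-1}(P)$ were $FP_2(\Q)$ it would be hyperbolic, hence finitely presented, forcing $P=q^{-1}(P)/N$ to be finitely presented since $N$ is finitely generated. The paper does not pause to argue that $\cd_\Q G$ equals $2$ exactly---your extra care there is harmless but unnecessary, since if $\cd_\Q G\le 1$ then $G$ is virtually free and every subgroup is hyperbolic anyway.
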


 An advantage of using rational cohomological dimension over the integral one is that it allows torsion. Corollary \ref{cor_sub_dim_2} can then be directly applied to one-relator groups.

\begin{cor}\label{intro_1-rel_sub} A subgroup of a hyperbolic one-relator group is hyperbolic if and only if it is of type $FP_2(\Q)$. \end{cor}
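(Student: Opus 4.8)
The plan is to deduce the corollary from Corollary~\ref{cor_sub_dim_2}. The ``only if'' direction is immediate, since a hyperbolic group is of type $FP_\infty(\Z)$ and hence of type $FP_2(\Q)$. For the ``if'' direction, the key input I would establish is that a hyperbolic one-relator group $G=\langle X\mid r\rangle$ satisfies $\cd_\Q G\le 2$; granting this, either $\cd_\Q G=2$ and Corollary~\ref{cor_sub_dim_2} applies verbatim to any subgroup $H\le G$, or $\cd_\Q G\le 1$ and a short separate argument finishes the proof.

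To bound $\cd_\Q G$ I would split into two cases. If $r$ is not a proper power then $G$ is torsion-free and Lyndon's theorem on one-relator groups gives $\cd_\Z G\le 2$, hence $\cd_\Q G\le 2$. If $r=s^k$ with $k\ge 2$ and $s$ not a proper power, then $G$ has torsion and $\cd_\Z G=\infty$, and this is precisely where rationalising pays off. By Lyndon's Identity Theorem, the cellular boundary map $\partial_2$ of the universal cover $\widetilde Y$ of the presentation $2$-complex of $\langle X\mid r\rangle$ has kernel the annihilator in $\Z G$ of $\sigma:=1+s+\dots+s^{k-1}$. Over $\Q G$ the element $e:=\tfrac1k\sigma$ is an idempotent (because $s^k=1$ in $G$), so $\Q G=\Q Ge\oplus\Q G(1-e)$ and $\mathrm{Ann}_{\Q G}(\sigma)=\Q G(1-e)$ is a projective direct summand of $C_2(\widetilde Y;\Q)\cong\Q G$. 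Splitting it off, the rationalised cellular chain complex of $\widetilde Y$ collapses to a length-$2$ projective $\Q G$-resolution $0\to\Q Ge\to(\Q G)^{(X)}\to\Q G\to\Q\to 0$ of the trivial module, so $\cd_\Q G\le 2$.

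Finally I would dispose of the case $\cd_\Q G\le 1$. Being hyperbolic, $G$ is finitely generated, so Dunwoody's theorem on groups of rational cohomological dimension one shows that $G$ is virtually free. Then every subgroup $H\le G$ is virtually free, and for a virtually free group being hyperbolic is equivalent to being finitely generated, which in turn is equivalent to being of type $FP_2(\Q)$ (a group of type $FP_2(\Q)$ is finitely generated, and a finitely generated virtually free group is hyperbolic). This settles the corollary when $\cd_\Q G\le 1$; when $\cd_\Q G=2$ it follows at once from Corollary~\ref{cor_sub_dim_2}.

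I expect the crux to be the second paragraph: making the identification of $\ker\partial_2$ with $\mathrm{Ann}_{\Z G}(\sigma)$ and the subsequent idempotent splitting completely precise for one-relator groups with torsion --- equivalently, nailing down the cleanest reference for the bound $\cd_\Q G\le 2$. The other ingredients, namely the elementary ``only if'' direction, the virtually-free case, and the appeal to Corollary~\ref{cor_sub_dim_2}, are routine.
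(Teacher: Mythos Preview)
Your proposal is correct and follows exactly the route the paper intends: the paper simply remarks that Corollary~\ref{cor_sub_dim_2} ``can then be directly applied to one-relator groups'' (implicitly using the well-known bound $\cd_{\Q} G\le 2$, which you spell out carefully via Lyndon's Identity Theorem and the idempotent $e=\tfrac1k\sigma$). Your explicit treatment of the edge case $\cd_{\Q} G\le 1$ via Dunwoody's theorem is a nice addition that the paper leaves tacit.
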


\noindent Corollary \ref{intro_1-rel_sub} in particular applies to one-relator groups with torsion, since they are  hyperbolic \cite{onerelatorhyperbolic}. Note that Jaikin-Zapirain and Linton subsequently proved that one-relator groups are coherent \cite{onerelatorcoherent}, from which the above may be alternatively deduced in combination with earlier results (in fact, $FP_2(\Q)$ may be relaxed to finitely generated).

Groups of rational cohomological dimension $2$ play a prominent role in geometric group theory. Besides aforementioned examples, they also notably include random groups and fundamental groups of graphs of virtually free groups with virtually cyclic edge groups. Both of these classes of groups were shown to satisfy Gromov's Surface Subgroup Conjecture \cite{cal_wal}, \cite{Wilton}.

Recall that a finitely presented group is necessarily of type $FP_2(\Z)$, i.e. almost finitely presented, which in turn is stronger than being of type $FP_2(\Q)$. In order to relax the finiteness assumption on the subgroup $H$ in Corollary \ref{cor_sub_dim_2}, we generalise in Section \ref{sec:hypcrit} Gersten's Theorem \ref{intro_Gersten_cohom_char} and  a result of Mineyev in \cite{mineyev} from finitely presented groups to groups of type $FP_2(\Q)$. This also leads us to answer positively a question of Arora and Mart\'{i}nez-Pedroza \cite[Question 1.9]{rational} on hyperbolicity of groups of type $FP_2(\Q)$ satisfying a rational homological linear isoperimetric inequality.

\begin{theorem}\label{intro_thm:answer_AMP}
Suppose $G$ is a group. Then $G$ is hyperbolic if and only if it is of type $FP_2(\Q)$ and its rational homological filling function of degree 2 is bounded above by a linear function.
\end{theorem}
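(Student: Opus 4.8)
The plan is to reduce the theorem to two equivalences, both valid for an arbitrary group $G$ of type $FP_2(\Q)$: first, $G$ is hyperbolic if and only if $H^2_{(\infty)}(G;\ell^{\infty}(\N,\R))=0$; and second, $H^2_{(\infty)}(G;\ell^{\infty}(\N,\R))=0$ if and only if the rational homological filling function of $G$ in degree $2$ is bounded above by a linear function. Granting these, the theorem follows at once: if $G$ is hyperbolic then it is of type $FP_\infty$, in particular of type $FP_2(\Q)$, so both equivalences apply and yield the linear filling bound; conversely, a group of type $FP_2(\Q)$ whose degree-$2$ rational homological filling function is linear satisfies $H^2_{(\infty)}(G;\ell^{\infty}(\N,\R))=0$ by the second equivalence, hence is hyperbolic by the first.

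The first equivalence is precisely the generalisation of Gersten's Theorem~\ref{intro_Gersten_cohom_char} (and of the companion result of Mineyev in \cite{mineyev}) from finitely presented groups to groups of type $FP_2(\Q)$, carried out in Section~\ref{sec:hypcrit}; I would take it as established there. Its proof relies on the characterisation of $H^{\ast}_{(\infty)}(G;-)$ as the cohomology of the complex of orbitally-bounded cochains on an \emph{arbitrary} projective $\Q G$-resolution of $\Q$: when $G$ is of type $FP_2(\Q)$ one may choose such a resolution $P_{\bullet}$ with $P_0,P_1,P_2$ finitely generated, and then in degrees at most $2$ the orbitally-bounded cochains are simply the $\R$-linear maps from the free modules into the coefficients that are bounded on the finitely many orbits of basis elements.

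For the second equivalence I would prove a duality lemma between $\ell^{1}$-chains and $\ell^{\infty}$-cochains on such a resolution. Fix $P_{\bullet}$ as above, equip $P_1$ and $P_2$ with the associated $\ell^{1}$-norms, and write $Z_1=\ker\partial_1=\operatorname{im}\partial_2$. Up to the standard equivalence of filling functions, linearity of the degree-$2$ rational homological filling function means that there is a constant $C$ with the property that every $z\in Z_1$ admits some $c\in P_2$ with $\partial_2 c=z$ and $\|c\|_1\le C\|z\|_1$; write $\operatorname{fv}(z)$ for the infimum of $\|c\|_1$ over all such $c$. Dualising, a $2$-cocycle is an orbitally-bounded $\R$-linear map $f\colon P_2\to\ell^{\infty}(\N,\R)$ vanishing on $\operatorname{im}\partial_3=\ker\partial_2$ — and here exactness of the resolution is exactly what is used — hence factoring through $P_2/\ker\partial_2\cong Z_1$; the filling bound makes the induced map $Z_1\to\ell^{\infty}(\N,\R)$ bounded with respect to $\|\cdot\|_1$, and extending it coordinatewise over $P_1$ by Hahn–Banach gives an orbitally-bounded $1$-cochain whose coboundary is $f$, so $H^2_{(\infty)}(G;\ell^{\infty}(\N,\R))=0$. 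For the reverse direction, if the filling function is not linear choose cycles $z_k\in Z_1$ with $\operatorname{fv}(z_k)/\|z_k\|_1\to\infty$; using the dual description of the filling norm, pick linear functionals $\varphi_k$ on $Z_1$ with $\varphi_k(z_k)=\operatorname{fv}(z_k)$ and $\|\varphi_k\circ\partial_2\|_{\infty}\le 1$, and assemble $(\varphi_k\circ\partial_2)_k$ into a single $\ell^{\infty}(\N,\R)$-valued $2$-cocycle; a bounded primitive of it would force $\operatorname{fv}(z_k)\le M\|z_k\|_1$ for a uniform $M$, a contradiction.

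The genuinely hard part is not the assembly above but the input from Section~\ref{sec:hypcrit}: generalising Gersten's and Mineyev's hyperbolicity criteria to groups of type $FP_2(\Q)$, which itself rests on setting up $\ell^{\infty}$-cohomology via orbitally-bounded cochains on an arbitrary projective resolution and on the freedom to interpolate between resolutions. Within the present argument the only delicate points are book-keeping: checking that the algebraic, resolution-based filling function used in the duality lemma agrees, up to the standard equivalence, with the geometric definition of \cite{rational} — so that ``bounded above by a linear function'' in the statement is literally the normed-module condition exploited above — and keeping the norm estimates uniform in the cocycle so that the two equivalences can be composed.
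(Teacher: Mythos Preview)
Your proposal is correct but organises the argument differently from the paper. The paper does \emph{not} pass through $\ell^{\infty}$-cohomology to prove Theorem~\ref{intro_thm:answer_AMP}. Instead it first extends Mineyev's isoperimetric criterion (Theorem~7 of \cite{mineyev}) to the $FP_2(\Q)$ setting: building a $2$-complex $X$ from the Cayley graph by attaching $2$-cells along a finite $\Q G$-generating set of integral $1$-cycles, it shows that $G$ is hyperbolic if and only if $\|b\|_{\Q}\le K|b|_1$ for all $b\in B_1(X,\Z)$. The implication ``$FV^2_{G,\Q}$ linear $\Rightarrow$ hyperbolic'' is then a direct contradiction argument: if $G$ were not hyperbolic one finds integral cycles $b_K$ violating the linear bound, scales them into a concrete integral part $I\subset Z_1(X,\Q)$, and uses the homogeneity $\|rb\|_{\Q}=|r|\|b\|_{\Q}$ to contradict linearity of $FV^2_{G,\Q}$. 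The converse is simply cited from \cite{rational}. Your route instead factors through the equivalence $H^2_{(\infty)}(G;\ell_\infty)=0 \Leftrightarrow FV^2_{G,\Q}$ linear, proved by a Hahn--Banach duality argument, and then invokes the $FP_2(\Q)$ version of Gersten's cohomological criterion (the paper's Theorem~\ref{Gersten_l_hyp}). Since Theorem~\ref{Gersten_l_hyp} itself is deduced from the Mineyev-type criterion, both approaches ultimately rest on the same geometric input; yours is more circuitous for the theorem at hand, but has the bonus of making the equivalence between vanishing $\ell^{\infty}$-cohomology and linear rational filling explicit, and of giving an independent proof of the forward direction rather than quoting \cite{rational}. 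The ``book-keeping'' you flag---passing from the integral-part definition of $FV^2_{G,\Q}$ to a linear bound on all of $Z_1$ with respect to $\ell^1$-norms---is exactly what the paper handles concretely via its scaling lemma on $B_1(X,\Q)$, and does require care.
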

\noindent A similar result over other rings, but with a notion of an isoperimetric inequality that is not the usual one, appears in \cite{kielakkropholler}.
 
As Gersten states in \cite{gersten_boundedcocycles}, ``Calculation of $H^n_{(\infty)}(G; \R)$ poses an interesting but difficult problem''. Indeed, there are only a few such calculations in the literature. Also, applications of these calculations have mostly been in degrees $n=1, 2$.  Next, we state some applications of Theorem \ref{intro_l_inf_gen}  on (non-)vanishing of $H^n_{(\infty)}(G; \R)$ in the top degree.

\begin{cor}\label{intro_non_vanish_lattice} Let $A=\Z$ or $\R$. Suppose $\Gamma$ is one of the following groups and $d=\vcd \, \Gamma\geq 1$:
\begin{enumerate}[label=(\roman*), nosep]
\item{ $\Gamma$ is a finitely generated right-angled Artin group;}
\item{ $\Gamma$ is the Bestvina-Brady group of any finite flag complex $L^n$ with $H^n(L; A)=0$;}
\item{ $\Gamma$ is the mapping class group  of a closed orientable surface of genus $2$;}
\item{ \label{cor_part_sl} $\Gamma=\sln (n,\Z)$ for $n\geq 2$;}
\item{ $\Gamma=\mathrm{Out}(\mathbb F_n)$ for $n\geq 2$.}
\end{enumerate}
Then the cokernel of the comparison map $\iota^d:H^d(\Gamma; A)\to H^d_{(\infty)}(\Gamma; A)$ contains a continuum of linearly independent elements.
\end{cor}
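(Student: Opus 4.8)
The plan is to push the non-vanishing up from a well-understood subgroup using Theorem~\ref{intro_l_inf_gen}. The reduction rests on an elementary diagram chase: if $H\leq\Gamma$ and the restriction $s\colon H^d_{(\infty)}(\Gamma;A)\to H^d_{(\infty)}(H;A)$ is surjective, then since the comparison map is natural, the square with horizontal maps $\iota^d_\Gamma,\iota^d_H$ and vertical maps (ordinary restriction and) $s$ commutes; as $s$ is onto and $s(\operatorname{im}\iota^d_\Gamma)\subseteq\operatorname{im}\iota^d_H$, it descends to a surjection $\operatorname{coker}\iota^d_\Gamma\twoheadrightarrow\operatorname{coker}\iota^d_H$. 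Preimages of a continuum-sized linearly independent family along a surjection of $A$-modules are again linearly independent, so it suffices to exhibit such a family in $\operatorname{coker}\iota^d_H$ for a single subgroup $H$ in each case. The surjectivity of $s$ is supplied by Theorem~\ref{intro_l_inf_gen}: with $R=\Z$, $n=d=\cd_\Z\Gamma$, $V=A$ when $\Gamma$ is torsion-free; with $R=\Q$, $n=d=\cd_\Q\Gamma$, $V=\R$ when $A=\R$ and $\Gamma$ has torsion; and, when $A=\Z$ and $\Gamma$ has torsion, after first replacing $\Gamma$ by a torsion-free finite-index subgroup $\Gamma_0$ (restriction $H^d_{(\infty)}(\Gamma;\Z)\to H^d_{(\infty)}(\Gamma_0;\Z)$ is an isomorphism by quasi-isometry invariance, and the same chase gives $\operatorname{coker}\iota^d_\Gamma\twoheadrightarrow\operatorname{coker}\iota^d_{\Gamma_0}$), then applying the torsion-free case to $\Gamma_0$.

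Next one must produce, in each case, a torsion-free finitely generated nilpotent (often free abelian) subgroup $H$ of Hirsch length exactly $d=\vcd\Gamma$ which realises $\cd_R\Gamma=d$; here $\cd_\Q\Gamma=\vcd\Gamma$ always (the bound $\leq$ by transfer over $\Q$, the bound $\geq$ because such an $H$ is present) and $\cd_\Z\Gamma=\vcd\Gamma$ for torsion-free $\Gamma$. The choices: (i) for a RAAG $A_L$, $d$ is the clique number of $L$ and $H=\Z^d$ spans a maximal clique; (ii) with $\dim L=n$, the hypothesis $H^n(L;A)=0$ forces $H_n(L;\Z)=0$ (free, being top homology of an $n$-complex), hence $\cd_\Z BB_L=n=d$, and $H=\Z^n$ is the augmentation kernel on the $\Z^{n+1}$ carried by any $n$-simplex; (iii) $\vcd\,\mathrm{MCG}(\Sigma_2)=4g-5=3=3g-3$ (the numerical coincidence that forces genus two) and $H=\Z^3$ is generated by Dehn twists along a pants decomposition; (iv) $\vcd\,\sln(n,\Z)=\binom n2$, realised by $H=U_n$, the group of upper unitriangular integer matrices, torsion-free nilpotent of Hirsch length $\binom n2$; (v) $\vcd\,\mathrm{Out}(\mathbb{F}_n)=2n-3$, realised by an abelian subgroup $\Z^{2n-3}$. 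After passing to $\Gamma_0$, replace $H$ by $H\cap\Gamma_0$, which is of the same type and Hirsch length and still realises $\cd_\Z\Gamma_0=d$.

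It then remains to prove: for $N$ finitely generated torsion-free nilpotent of Hirsch length $d\geq1$ (in particular $N=\Z^d$), $\operatorname{coker}(\iota^d_N\colon H^d(N;A)\to H^d_{(\infty)}(N;A))$ contains a continuum of linearly independent elements. Since $N$ is an orientable Poincar\'e duality group of dimension $d$, $H^d(N;A)\cong A$, so it suffices to find continuum-many bounded top-cochain classes that stay linearly independent modulo $\operatorname{im}\iota^d_N$. Realise $K(N,1)$ as a closed nilmanifold $N_\R/N$ with an $N$-invariant cell structure, fix a surjection $h\colon N\to\Z$ (one exists since $d\geq1$, chosen so its level sets run in a step-one direction), and for $\alpha\in(0,1)$ let $\omega_\alpha$ be the bounded $A$-valued $d$-cochain on $N_\R$ equal, near the cell at $x$, to $g_\alpha(h(x))$ with $g_\alpha(k)=\lfloor(k+1)^\alpha\rfloor-\lfloor k^\alpha\rfloor$. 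For the weighted box $B_R$ of radius $R$ in Malcev coordinates, the ``flux'' $\Phi(\,\cdot\,)=\sum_{\text{top cells in }B_R}(\,\cdot\,)$ satisfies $\Phi(\omega_\alpha)\asymp R^{\delta-1}\lfloor R^\alpha\rfloor\asymp R^{\delta-1+\alpha}$, where $\delta$ is the growth degree of $N$, because each level set $\{h=k\}\cap B_R$ with $|k|\lesssim R$ meets $\asymp R^{\delta-1}$ cells; meanwhile $\Phi$ of the coboundary of a bounded $(d-1)$-cochain is $O(|\partial B_R|)=O(R^{\delta-1})$, and $\Phi$ of $\iota^d_N$ of the fundamental class is $\asymp|B_R|\asymp R^{\delta}$. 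As the exponents $\{\delta-1+\alpha:\alpha\in(0,1)\}$ are pairwise distinct, all strictly greater than $\delta-1$ and all strictly less than $\delta$, a leading-term comparison shows the $[\omega_\alpha]$ are linearly independent in $\operatorname{coker}\iota^d_N$ (over $\Z$ if $A=\Z$, over $\R$ if $A=\R$). For $N=\Z^d$ every estimate here is elementary, which already settles (i), (ii), (iii) and (v).

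The main obstacle is the nilpotent computation behind (iv) for $n\geq 3$: establishing the two-sided growth estimates $|\{h=k\}\cap B_R|\asymp R^{\delta-1}$ and $|\partial B_R|=O(R^{\delta-1})$, that is, choosing $h$ and the box $B_R$ compatibly with the lower central series so that the $(d-1)$-dimensional boundary is dominated by the weight-one direction, in the spirit of Pansu's asymptotics for balls in nilpotent groups. The supporting facts --- the identities $\cd_R\Gamma=\vcd\Gamma$ and the explicit top-Hirsch-length nilpotent subgroups in (i)--(v) --- are standard but must be assembled with care; once the growth estimates are in hand the exponent bookkeeping in the previous paragraph is routine.
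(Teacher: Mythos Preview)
Your reduction via Theorem~\ref{intro_l_inf_gen} to an amenable Poincar\'e duality subgroup $N$ of Hirsch length $d$, and the case-by-case identification of $N$ (abelian in (i)--(iii) and (v), upper unitriangular in (iv)), match the paper exactly. The two arguments diverge only in how the continuum is extracted from $N$ and how the cokernel is handled.

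The paper does not build cocycles. Its Lemma~\ref{lem:PD} uses Poincar\'e duality to identify $H^d_{(\infty)}(N;\R)$ with the coinvariants $\ell^\infty(N,\R)_N$, i.e.\ degree-zero uniformly finite homology; Block--Weinberger gives non-vanishing for amenable $N$, and results from Blank's thesis upgrade this to uncountable dimension. The case $A=\Z$ is reduced to $A=\R$ by Gersten's identity $H^i_{(\infty)}(G;\Z)\cong H^i_{(\infty)}(G;\R)$ for $i\ge2$ (Lemma~\ref{int_real}). For the cokernel, the paper works at the level of $\Gamma$ rather than $N$: since $\Gamma$ is of type $VF$ in (i), (iii)--(v), and has countably generated $H^d$ in case (ii) by a result of Leary, the image of $\iota^d$ is at most countably generated and cannot kill a continuum of independent classes.

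Your flux construction is in effect a bare-hands reproof of the Block--Weinberger/Blank input in these special cases. For $N=\Z^d$ it goes through (modulo the cosmetic issue that $g_\alpha(k)$ is undefined for $k<0$), so (i)--(iii) and (v) are fine. For the unipotent subgroup $U_n$ in case~(iv), however, you yourself flag the two-sided growth estimates for level sets and boundaries of Malcev boxes as ``the main obstacle'' and leave them as a sketch appealing to Pansu; as written, (iv) is therefore incomplete. The paper sidesteps this entirely by quoting the literature, which treats all amenable PD groups uniformly. Your alternative cokernel argument---pulling back from $\operatorname{coker}\iota^d_N$ via the naturality square, using that $H^d(N;A)\cong A$ has rank one---is valid and arguably tidier than bounding $H^d(\Gamma;A)$, but it is not what the paper does.
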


\noindent In contrast, by a result of Wienhard \cite{wienhard},  all uniform lattices in connected semisimple Lie groups with finite centre  have vanishing $\ell^{\infty}$-cohomology with real coefficients in dimensions above the dimension of maximal flats of the associated symmetric space. In Section \ref{sec:applications}, we obtain an analogue of Wienhard's result for relatively hyperbolic groups (see Corollary \ref{rel_hyp_cd}).

Poincar\'{e} duality groups have  vanishing $\ell^{\infty}$-cohomology with real coefficients in the top dimension precisely when they are not amenable (see Lemma \ref{lem:PD}). This can be seen from the fact that $\ell^{\infty}$-cohomology of a countable group is dual to the {\it uniformly finite homology} of Block and Weinberger and their fundamental characterisation of amenable groups via  uniformly finite homology in degree zero \cite{BlockWein92}. The following are applications of this result and Theorem \ref{intro_l_inf_gen}.

\begin{cor}\label{intro_quasi_rel} Let $G$ be a finitely generated  relatively hyperbolic group with $\cd_{\Q} \, G<\infty$. 
Let $\G$ be a finitely generated group with $2\leq \cd_{\Q} \, \G <\infty$. Suppose that $\G$ has an  amenable  Poincar\'{e} duality subgroup of dimension $\cd_{\Q} \, \G$.  If $G$ is quasi-isometric to $\G$, then there is a peripheral subgroup $H$ of $G$ such that $\cd_{\Q} \, H=\cd_{\Q} \, G$.
\end{cor}

\noindent Corollary \ref{intro_quasi_rel} applies, for example, to groups $\G$ listed in Corollary \ref{intro_non_vanish_lattice}; of note being the case when $\G$ is a Bestvina-Brady group, as the other cases were previously known.

\begin{cor}\label{intro_quasi_graph} 
Let $\G$ be a finitely generated group with $2\leq \cd_{\Q} \,\G<\infty$. Suppose that $\G$ has an  amenable Poincar\'{e} duality subgroup of dimension $\cd_{\Q} \, \G$.  Suppose $\G$ is quasi-isometric to the fundamental group $G$ of a finite graph of hyperbolic groups. Then  $\cd_{\Q} \, G=\cd_{\Q} \, \G= 2$.
\end{cor}

\begin{cor}\label{intro_quasi_graph_app} Let $\G$ be a finitely generated group satisfying the hypothesis of Corollary \ref{intro_non_vanish_lattice}. Suppose $\G$ is quasi-isometric to the fundamental group of a finite graph of hyperbolic groups. Then $\cd \,\G\leq 2$, and $\G$ is either a right-angled Artin group or a Bestvina-Brady group. 
\end{cor}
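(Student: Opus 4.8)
The plan is to deduce the statement directly from Corollary~\ref{intro_quasi_graph}. The crucial input --- which is already used to establish Corollary~\ref{intro_non_vanish_lattice} --- is that each group $\G$ in its list is finitely generated, has finite rational cohomological dimension, and contains an amenable Poincar\'e duality subgroup of dimension $\cd_\Q\G$. For a right-angled Artin group $A_L$ one uses $\Z^{\omega(L)}$ supported on a maximal clique of $L$, where $\omega(L)=\cd_\Q A_L$. For $\sln(n,\Z)$ one uses the group of upper unitriangular integral matrices, a finitely generated torsion-free nilpotent --- hence amenable Poincar\'e duality --- group of Hirsch length $\binom{n}{2}$, which equals $\cd_\Q\sln(n,\Z)=\vcd\sln(n,\Z)$ since $\sln(n,\Z)$ is a virtual duality group. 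For the mapping class group of the closed genus-$2$ surface one uses $\Z^{3}$ spanned by Dehn twists along a pants decomposition, and here the maximal free abelian rank $3g-3$, the virtual cohomological dimension, and hence $\cd_\Q$ all equal $3$. For $\mathrm{Out}(\mathbb{F}_n)$ one uses a free abelian subgroup of rank $2n-3=\vcd\mathrm{Out}(\mathbb{F}_n)=\cd_\Q\mathrm{Out}(\mathbb{F}_n)$. Finally, for a Bestvina--Brady group $BB_L$ the hypothesis $H^{\dim L}(L;A)=0$ forces $\cd_\Q BB_L=\dim L$, and the rank-$\dim L$ free abelian subgroup $\ker(\Z^{\dim L+1}\to\Z)$ associated to a top-dimensional simplex of $L$ has exactly that rank. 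Throughout one may assume $\cd_\Q\G\ge 2$; the remaining case, in which $\G$ is virtually free, is handled separately.

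With these hypotheses verified, Corollary~\ref{intro_quasi_graph} applies to $\G$ and to the fundamental group $G$ of the given finite graph of hyperbolic groups, and yields $\cd_\Q G=\cd_\Q\G=2$.

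It then remains to identify $\G$ within the list. Since $\cd_\Q\G=2$, the families (iii)--(v) are excluded: the genus-$2$ mapping class group has $\cd_\Q=3$, whereas $\sln(n,\Z)$ has $\cd_\Q=\binom{n}{2}$ and $\mathrm{Out}(\mathbb{F}_n)$ has $\cd_\Q=2n-3$, and for every $n\ge 2$ neither of these is $2$. Hence $\G$ lies in family (i) or (ii), i.e.\ $\G$ is a right-angled Artin group or a Bestvina--Brady group; in particular $\G$ is torsion-free. For a right-angled Artin group with $\cd_\Q=2$ the complex $L$ is a graph and $\cd\G=2$ at once; for a Bestvina--Brady group with $\cd_\Q BB_L=2$ one has $\dim L=2$, and the cohomology computations of Bestvina--Brady and Leary--Saadeto\u{g}lu pin $\cd BB_L$ down to $2$ as well. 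Thus $\cd\G\le 2$, which completes the proof.

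The step I expect to be the main obstacle is the uniform verification of the amenable Poincar\'e duality subgroup hypothesis across all five families --- in particular identifying $\cd_\Q BB_L$ from the vanishing of $H^{\dim L}(L;A)$ and exhibiting a free abelian subgroup of matching rank in a Bestvina--Brady group, and realising a free abelian subgroup of full rank $2n-3$ in $\mathrm{Out}(\mathbb{F}_n)$ --- along with the low-dimensional bookkeeping needed to settle the case $\cd_\Q\G\le 1$.
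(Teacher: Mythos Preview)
Your approach is essentially identical to the paper's: it too refers back to the proof of Corollary~\ref{intro_non_vanish_lattice} for the amenable Poincar\'e duality subgroups, invokes Corollary~\ref{intro_quasi_graph} to force $\cd_\Q\G=2$, and then eliminates cases (iii)--(v). The paper's argument is terser---it neither spells out the passage from $\cd_\Q\G=2$ to $\cd\G\le 2$ nor addresses the low-dimensional case you flag---but the structure and the key inputs are the same.
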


Besides our main result, Theorem  \ref{intro_l_inf_gen} and its applications, in Section \ref{sec:applications}, we consider a long exact sequence  in group cohomology arising from a group pair of a relatively hyperbolic group $G$  and the collection of its peripheral subgroups. We use this sequence for calculations of  $\ell^{\infty}$-cohomology of $G$. The following result is an application of such a calculation.

\begin{theorem}\label{intro_rel_hyp_fund_grp} Let $M$ be a complete, finite-volume Riemannian manifold of dimension $n$ of negative sectional curvature bounded away from zero. Then
$H^{n}_{(\infty)}(M; \R)=0$. In addition, if $M$ is non-compact, then $H^{n-1}_{(\infty)}(M; \R)$ contains a continuum of linearly independent elements.
\end{theorem}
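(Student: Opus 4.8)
The plan is to pass to $G=\pi_1(M)$, which is legitimate because $M$ is aspherical by Cartan--Hadamard, so that $H^{*}_{(\infty)}(M;\R)=H^{*}_{(\infty)}(G;\R)$, and then to play off two features: in the cusped case $G$ has small rational cohomological dimension, while the peripheral subgroups are Poincar\'e duality groups of dimension $n-1$ with very large top-degree $\ell^{\infty}$-cohomology. First I would record the geometry. In pinched negative curvature the thick--thin decomposition realises $M$ as the interior of a compact aspherical manifold $\bar M$ whose boundary is a disjoint union of closed infranilmanifolds $N_1,\dots,N_k$; consequently $G$ is finitely presented and hyperbolic relative to $\mathcal P=\{P_1,\dots,P_k\}$ with $P_i=\pi_1(N_i)$, each $P_i$ is an infinite, finitely generated, virtually nilpotent --- hence amenable --- virtual $\mathrm{PD}_{n-1}$-group (virtually $\Z^{n-1}$ if $M$ has constant curvature), and, as $M$ has finite positive volume, $G$ is a non-elementary relatively hyperbolic group, so it contains a non-abelian free subgroup and is non-amenable.

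For the first assertion: if $M$ is closed then $G$ is a hyperbolic $\mathrm{PD}_n$-group with $n\ge 2$, hence not virtually cyclic and so non-amenable, whence $H^{n}_{(\infty)}(M;\R)=0$ by Lemma~\ref{lem:PD}. If $M$ is non-compact then $\bar M$ is a compact $n$-manifold with non-empty boundary, hence homotopy equivalent to a finite complex of dimension $\le n-1$; being aspherical, $\bar M$ is then a finite $(n-1)$-dimensional $K(G,1)$, so $\cd_{\R}G\le\cd_{\Z}G\le n-1<n$, and the characterisation of $\ell^{\infty}$-cohomology as the cohomology of orbitally-bounded cochains on a length-$\cd_{\R}G$ projective $\R G$-resolution of $\R$ gives $H^{n}_{(\infty)}(M;\R)=0$.

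Now assume $M$ is non-compact, so $k\ge 1$. Since $P_1\le G$ is a virtual $\mathrm{PD}_{n-1}$-group, $\cd_{\Q}G\ge n-1$, so in fact $\cd_{\Q}G=n-1$; and $P_1$ is of type $FP_{\infty}$, in particular of type $FP_{n-1}(\Q)$. Applying Theorem~\ref{intro_l_inf_gen} to the inclusion $P_1\le G$ with $R=\Q$ and $V=\R$ shows that the restriction $H^{n-1}_{(\infty)}(G;\R)\to H^{n-1}_{(\infty)}(P_1;\R)$ is surjective, so it suffices to exhibit a continuum of linearly independent elements in $H^{n-1}_{(\infty)}(P_1;\R)$ and lift them along this surjection. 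When $P_1$ is virtually $\Z^{n-1}$, Corollary~\ref{intro_non_vanish_lattice}(i) applied to $\Z^{n-1}$ in degree $d=n-1$ provides such a family for $\Z^{n-1}$, and quasi-isometry invariance of $\ell^{\infty}$-cohomology transports it to $P_1$. In general, Poincar\'e duality for the amenable virtual $\mathrm{PD}_{n-1}$-group $P_1$ identifies $H^{n-1}_{(\infty)}(P_1;\R)$ with the degree-zero uniformly finite homology $H^{uf}_0(P_1;\R)$, which carries a continuum of linearly independent classes because an infinite amenable group has an enormous space of invariant means --- a quantitative refinement of the Block--Weinberger criterion behind Lemma~\ref{lem:PD}.

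The step I expect to be the main obstacle is exactly this last one for non-abelian nilpotent cusps: promoting the Block--Weinberger non-vanishing of $H^{uf}_0(P_i;\R)$ to a continuum of linearly independent classes --- say, via an almost-disjoint family of coarse ``blocks'' along an exhaustion of $P_i$ --- and pairing it with a Poincar\'e duality for the peripheral groups at the level of $\ell^{\infty}$-cohomology; the remaining steps are formal given Theorem~\ref{intro_l_inf_gen}, Lemma~\ref{lem:PD}, Corollary~\ref{intro_non_vanish_lattice}, and the standard structure of cusps. As an alternative to invoking Theorem~\ref{intro_l_inf_gen} in the cusped case, one may instead run the long exact sequence of the group pair $(G,\mathcal P)$ in $\ell^{\infty}$-cohomology developed in Section~\ref{sec:applications}: the relative term $H^{n}_{(\infty)}(G,\mathcal P;\R)$ vanishes because a Poincar\'e--Lefschetz duality identifies it with $H^{uf}_0$ of the universal cover of $\bar M$, which is quasi-isometric to $G$ and hence has vanishing $H^{uf}_0$ since $G$ is non-amenable, so the connecting map is zero and restriction onto $\bigoplus_i H^{n-1}_{(\infty)}(P_i;\R)$ is surjective.
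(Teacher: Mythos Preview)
Your argument is correct, and in fact your flagged ``main obstacle'' is not an obstacle at all: Lemma~\ref{lem:PD} as stated in the paper already asserts that for an amenable Poincar\'e duality group $P$ of dimension $d$, the space $H^{d}_{(\infty)}(P;\R)$ has uncountably infinite $\R$-dimension (the proof cites Blank's thesis for a lower bound of $2^{2^{|P|}}$), so no separate promotion of Block--Weinberger non-vanishing is needed. With that, your route via Theorem~\ref{intro_l_inf_gen} is exactly the mechanism behind Proposition~\ref{non_vanish_gen}, applied with $\Gamma=G$, $N=P_1$, $d=n-1=\cd_{\Q}G$.

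The paper's own proof takes your ``alternative'' route rather than your main one. It relies on Corollary~\ref{rel_hyp_fund_grp}, which feeds through Corollary~\ref{rel_hyp_cd}: one runs the long exact sequence of the pair $(G,\mathcal P)$ with coefficients $\ell^{\infty}(G,\R)$ and invokes Milizia's vanishing $H^{j}_{(\infty)}(G,\mathcal P;V)=0$ for $j\ge 2$ to obtain both $H^{n}_{(\infty)}(G;\R)=0$ and a surjection $H^{n-1}_{(\infty)}(G;\R)\twoheadrightarrow \prod_i H^{n-1}_{(\infty)}(P_i;\R)$, then applies Lemma~\ref{lem:PD} to the peripherals. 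Your justification for the vanishing of the relative term via Poincar\'e--Lefschetz duality and non-amenability is heuristically right but the paper simply cites Milizia for it. Your main route is arguably more elementary in the non-compact case: you get $H^{n}_{(\infty)}=0$ for free from $\cd G\le n-1$ without touching relative hyperbolicity, and the surjection onto $H^{n-1}_{(\infty)}(P_1;\R)$ comes from the paper's main Theorem~\ref{intro_l_inf_gen} rather than from the pair sequence. The paper's route, on the other hand, does not need to verify $\cd_{\Q}G=n-1$ and yields the stronger computation $\cdl G=n-1$ along the way.
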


Another motivation to study $\ell^{\infty}$-cohomology comes from its connection  to Gromov's bounded cohomology  via the comparison maps:
\begin{equation}\label{eq_compare} H^n_b(G;V)\xrightarrow{c^n} H^n(G; V)\xrightarrow{\iota^n} H^n_{(\infty)}(G;V).
 \end{equation}

 \noindent The composition of the comparison maps is known to vanish \cite{wienhard} and Wienhard asks when the sequence is exact \cite[Question 8]{wienhard}. Blank in \cite[Question 6.3.10]{Blank_thesis} also asks if the sequence (\ref{eq_compare}) can be extended. Frigerio and Sisto in \cite{FriSi}, show that the exactness of (\ref{eq_compare}), when $G$ is finitely presented, $V=\R$ and $n=2$, is equivalent to a conjecture of Gromov on bounded primitives of differential forms on universal covers of closed Riemannian manifolds \cite{Gromov93}. Using this reformulation, Gromov's conjecture was recently disproved by Ascari and Milizia  \cite{AscMil}.

 In Section \ref{sec:linf}, we make the following observation.

\begin{theorem}\label{intro_exact_rel_hyp} Let $G$ be  a relatively hyperbolic group with amenable peripheral subgroups, then the sequence (\ref{eq_compare}) is exact for all $n\geq 2$ and all dual normed $\R G$-modules $V$.
\end{theorem}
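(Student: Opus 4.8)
The plan is to compare the three cohomology theories appearing in \eqref{eq_compare} along the long exact sequence of the group pair $(G,\mathcal{P})$, where $\mathcal{P}=\{P_1,\dots,P_k\}$ is the finite collection of (amenable) peripheral subgroups. Since the composition $\iota^n\circ c^n$ already vanishes by \cite{wienhard}, the content of the theorem is the reverse inclusion $\ker\iota^n\subseteq\operatorname{im}c^n$, for each $n\geq 2$ and each dual normed $\R G$-module $V$, and this is what I would establish; note that the special case $\mathcal{P}=\emptyset$ is just the known vanishing of $\iota^n$ on hyperbolic groups in degrees $\geq 2$.

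First I would run the long exact sequence of the pair $(G,\mathcal{P})$ in bounded cohomology and in ordinary cohomology, with coefficients in $V$, compatibly via the comparison maps $c^\bullet$ and the relative comparison map $c^\bullet_{\mathrm{rel}}$, and I would use amenability of the $P_i$ twice. On the one hand, because each $P_i$ is amenable and $V$ is dual, $H^j_b(P_i;V)=0$ for all $j\geq 1$; substituting this into the bounded-cohomology long exact sequence of the pair makes the map $H^n_b(G,\mathcal{P};V)\to H^n_b(G;V)$ an isomorphism for every $n\geq 2$. On the other hand, since $G$ is hyperbolic relative to $\mathcal{P}$, I would invoke the relatively hyperbolic analogue of Mineyev's surjectivity theorem (the absolute case being \cite{mineyev}): the relative comparison map $c^n_{\mathrm{rel}}\colon H^n_b(G,\mathcal{P};V)\to H^n(G,\mathcal{P};V)$ is surjective for $n\geq 2$. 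Chasing the commuting square relating $c^n$, $c^n_{\mathrm{rel}}$ and the two maps out of $H^n_b(G,\mathcal{P};V)$, and then using exactness of the ordinary long exact sequence of the pair, I would obtain
\[
\operatorname{im}(c^n)=\operatorname{im}\bigl(H^n(G,\mathcal{P};V)\to H^n(G;V)\bigr)=\ker\bigl(H^n(G;V)\xrightarrow{\,\mathrm{res}\,}\textstyle\bigoplus_{i}H^n(P_i;V)\bigr),
\]
so that it remains only to show that a class killed by $\iota^n$ restricts trivially to every peripheral subgroup.

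For that, take $\xi\in H^n(G;V)$ with $\iota^n(\xi)=0$. By naturality of the comparison map under restriction, $\iota^n_{P_i}(\mathrm{res}_{P_i}\xi)=\mathrm{res}_{P_i}(\iota^n\xi)=0$ for each $i$. Here amenability enters a second time: for an amenable group $P$ and a dual normed $\R P$-module the comparison map $\iota^n_{P}\colon H^n(P;V)\to H^n_{(\infty)}(P;V)$ is injective --- this is (the easy half of) Milizia's characterisation of amenable groups via the comparison map \cite{Milizia}, and it is proved by averaging an orbitally bounded cochain on a free resolution against an invariant mean on $P$, the hypothesis that $V$ is dual being exactly what makes such a $V$-valued mean meaningful and yielding a cochain-level left inverse to $\iota_P$. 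Hence $\mathrm{res}_{P_i}\xi=0$ for every $i$, so $\xi$ lies in the kernel of restriction to the peripherals, which by the displayed identity equals $\operatorname{im}c^n$. Together with $\operatorname{im}c^n\subseteq\ker\iota^n$ this gives $\ker\iota^n=\operatorname{im}c^n$, i.e.\ exactness of \eqref{eq_compare} at $H^n(G;V)$, for all $n\geq 2$.

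The formal part of this argument --- the compatible long exact sequences of the pair, the collapse of the bounded-cohomology row via Ivanov's vanishing, the diagram chases, and the appeal to Milizia's amenability criterion --- I expect to be routine. The one genuinely substantial ingredient, and the step I expect to be the main obstacle, is the relatively hyperbolic version of Mineyev's surjectivity theorem at the level of generality needed here: an arbitrary (possibly not even $FP_2(\Q)$) relatively hyperbolic group $G$ and arbitrary dual normed coefficients. I would expect to obtain it either by transporting Mineyev's straightening/contracting-homotopy argument to a cusped or coned-off hyperbolic model of $(G,\mathcal{P})$, or by deducing it from the absolute statement for an associated hyperbolic group via the long exact sequence of the pair; pinning this down is the geometric heart of the result, everything else being homological bookkeeping.
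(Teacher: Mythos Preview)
Your proposal is correct and follows essentially the same route as the paper's proof: a diagram chase combining surjectivity of the relative comparison map $c'\colon H^n_b(G,\mathcal H;V)\to H^n(G,\mathcal H;V)$ with injectivity of $\iota''\colon H^n(H_i;V)\to H^n_{(\infty)}(H_i;V)$ for amenable $H_i$ (the latter cited from \cite{Milizia}). The ``main obstacle'' you anticipate is not one---the relative Mineyev-type surjectivity is already available in the literature as \cite{Franceschini2018}, which the paper invokes directly; your additional appeal to Ivanov's vanishing for $H^*_b(P_i;V)$ is unnecessary for the chase (only the inclusion $\ker(\mathrm{res})\subseteq\operatorname{im}c^n$ is needed, not the reverse) but does no harm.
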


\noindent We should note that when $n=2$ and $V=\R$, Theorem \ref{intro_exact_rel_hyp} follows from Theorem 4 and Proposition 14 of \cite{FriSi}.

In Section \ref{sec:applications}, we also define {\it $\ell^{\infty}$-cohomological dimension} $\cdl G$ of a group  $G$. It is a quasi-isometry invariant of finitely generated groups. As we have alluded to in Corollaries \ref{intro_quasi_rel} and \ref{intro_quasi_graph_app}, $\cdl G$ can be useful for distinguishing finitely generated groups up to quasi-isometry.  By Theorem \ref{intro_Gersten_cohom_char} and  \cite[Theorem 20]{mineyev00}, a finitely presented group $G$ is hyperbolic if and only if  $\cdl G\leq 1$. It is natural to ask what other properties of a group $G$ can be detected by $\cdl G$. In Section \ref{sec:questions}, we end with some questions on $\cdl G$ and its connections to acylindrical hyperbolicity, the dimension of maximal flats in cocompact CAT(0) complexes, and higher homological filling functions of $G$.

\subsection{A few words on the proof of Theorem \ref{intro_l_inf_gen}} In Gersten's proof of Theorem \ref{intro_cor_i_inft_Gersten} \cite[Theorem 6.3]{cohomologicalcharacterisation}, he constructs a retraction from the relation module $Z_1(G)$ to the relation module $Z_1(H)$ which is a bounded linear map with respect to an $\ell^1$-norm on $Z_1(G)$ and the standard norm on $Z_1(H)$. 
A key property used by Gersten is that $Z_1(G)$ can be constructed to be finitely generated and free. This follows from the assumptions that $\cd \, G=2$ and that $G$ admits a finite $3$-dimensional $K(G,1)$-complex. The  $\ell^1$-norm on $Z_1(G)$ is then equivalent to the standard norm, and the retraction becomes bounded with respect to the standard norms. As he states, ``When translated into the language of $\ell^{\infty}$-cohomology, this means that the restriction homomorphism is surjective''.

This method of using the standard norm and Gersten's original definition of $\ell^{\infty}$-cohomology does not work in our generality, because we have no finiteness assumptions on $G$.  First, we make the key observation that the $\ell^{\infty}$-cohomology of a group can be defined using orbitally-bounded cochains on any projective resolution. For our purposes, this resolution is the truncation of a suitably constructed free $RG$-resolution $F_*\twoheadrightarrow R$. In order to deal with torsion in $G$, we work over any ring $\Z\leq R\leq \Q$. The resolution $F_*\twoheadrightarrow R$ comes with an embedded  $RH$-resolution $P_*\twoheadrightarrow R$ by finitely generated free $RH$-modules and such that $Z_{n-1}(F_*)$ is a free $RG$-module equipped with an $\ell^1$-norm.  We then construct a retraction  $\rho : Z_{n-1}(F_*)\to Z_{n-1}(P_*)$ which is also a bounded linear map with respect to the corresponding norms. With this in hand, given an $n$-cocycle $\alpha\in Hom_R(Z_{n-1}(F_*), V)$ bounded on $H$-orbits,  the retraction $\rho$ defines an $n$-cocycle $\beta\in Hom_R(Z_{n-1}(F_*), V)$, which maps to $\alpha$ under the restriction map. The fact that $\rho$ is a bounded linear map allows us to show that $\beta$ is bounded on $G$-orbits. This shows that the restriction map in $\ell^{\infty}$-cohomology is surjective in degree $n$.

\noindent{\bf Acknowledgements.} The authors are thankful to Roberto Frigerio, Ian Leary, Clara L\"{o}h and Eduardo Mart\'{i}nez-Pedroza for helpful discussions and comments. We are also grateful to Francesco Milizia for his remarks which led to an improvement of Corollary \ref{rel_hyp_cd}, and to Robert Kropholler for spotting unnecessarily strong assumptions in an earlier draft. Finally,  we are thankful to the referee for the careful reading of the paper and their suggestions.

\section{Preliminary results}\label{sec:prelim}

First, we define the necessary finiteness notions and mention their relevant properties. For more details, we refer the reader to Chapter VIII of \cite{brown_book}.

Let $R$ denote a commutative ring with unit and $G$ denote a discrete group. The \textit{cohomological dimension of $G$ over $R$} can be defined by
$$\cd_R \, G=\sup\{n\in\mathbb{N}\mid  H^{n}(G,M)\neq 0\text{ for some } R G\text{-module }M\}.$$
When $R=\Z$, we omit it from the notation and denote the cohomological dimension by $\cd \, G$. A group $G$ is said to have finite {\it virtual cohomological dimension} equal to $n$, denoted $\vcd \, G$, if $G$ has a subgroup $H$ of finite index with $\cd \, H =n$. This is a well-defined notion by \cite[VIII.2.4]{brown_book}. A group $G$ is said to be of \textit{type} $FP_n(R)$ for some $n\in\mathbb{Z}_{\geqslant0}\cup\{\infty\}$ if there is a projective $RG$-resolution 
\[\cdot\cdot\cdot\rightarrow P_2\rightarrow P_1\rightarrow P_0\twoheadrightarrow R\]
 such that $P_i$ is finitely generated and projective as an $RG$-module for all $i\leqslant n$. A projective $RG$-resolution $P_*\twoheadrightarrow R$ is said to be of {\it length $n$}, if 
 $P_n\ne 0$ and $P_i=0$ for all $i> n$.
 A group $G$ is said to be of {\it type $FP(R)$}, if there is a projective $RG$-resolution of $R$ by finitely generated projective $RG$-modules of finite length. It is immediate from the definitions that if $G$ is of type $FP(R)$, then it is of type $FP_{\infty}(R)$. 

Denote $\cd_R \, G=n$. By \cite[VIII.2.1]{brown_book}, it follows that  $G$ is of type $FP(R)$ if and only if $n<\infty$  and $G$ is of type $FP_n(R)$. 

 There are analogous topological finiteness properties which we explain next.

 An Eilenberg-MacLane space or $K(G, 1)$-complex is an aspherical CW-complex with fundamental group equal to $G$. A group $G$ is of {\it type} $F_n$ if there is a $K(G, 1)$-complex $X$ with finite $n$-skeleton and it is of {\it type}
$F$ if $X$ is finite. If $G$ is of type $F_n$ or $F$, then it is of type $FP_n(\Z)$ or $FP$, respectively. A group $G$ is finitely generated if and only if it is of type $F_1$ or $FP_1(R)$  \cite[1.2.1]{bieri_book}. A group $G$ is finitely presented if and only if it is of type $F_2$.

Let  $H$ be a subgroup of $G$. Recall that given an $RH$-module $M$, the induced $RG$-module is defined as $\Ind{M}:=RG\otimes_{RH} M$. The left action of $G$ on $RG$ induces an $RG$-module structure on $\Ind{M}$.  Define an $RH$-module homomorphism
$$j_M: M\to \Ind M$$
by $j_M(m)=1_G\otimes m, \, \forall m\in M$. The following result is a direct corollary of \cite[III.5.1]{brown_book}. 

\begin{lemma}\label{lem:ind} Suppose  $G$ is a group, $H$ is a subgroup of $G$ and $M$ is an $RH$-module. Then $j_M$ is a canonical embedding  of  $M$ as a direct  $RH$-module summand in $\Ind M$. 
\end{lemma}

Throughout the remainder of this section, $R$ will denote a subring of $\Q$ containing $\Z$.
Let $F$ be a free $RG$-module with  free $RG$-basis $\Lambda$. Then $\Omega=\{g\alpha \;|\; g\in G, \alpha\in \Lambda\}$ is a free $R$-basis of $F$.
The {\it $\ell^1$-norm on $F$ with respect to  $\Lambda$} is defined as
$$\left|\sum_{x\in \Omega}a_x x\right|_1=\sum_{x\in \Omega}|a_x|.$$

\begin{definition}{\rm
Let $\rho: F\twoheadrightarrow M$ be a surjective homomorphism of $RG$-modules where $F$ is a free $RG$-module  given the $\ell^1$-norm $|\cdot|_1$ with respect to a fixed basis. The {\it filling norm} on $M$ (with respect to $\rho$) is defined as
$$
\|m\|=\inf\bigl\{\ |x|_1 \; \big|\; x\in F,\ \rho(x)=m\bigr\}.
$$
When $F$ is finitely generated over $RG$, we will say that this is the {\it standard norm} on $M$}.

\begin{rmk}{\normalfont The filling norm on an $RG$-module is a pseudo-norm. It is a norm for example if $R=\Z$.}
\end{rmk}

\end{definition}

\begin{lemma}[{\cite[Lemma 4.6]{EMP}}]\label{fg_fill_norms} Let $f: M\to N$ be a homomorphism between finitely generated $RG$-modules with standard norms $\|\cdot\|_M$ and $\|\cdot\|_N$, respectively. Then, there is a constant $C>0$ so that
$\|f(m)\|_N\leq C\|m\|_M$ for all $m\in M$.
\end{lemma}

Two filling norms $\|\cdot\|_1$ and $\|\cdot\|_2$ on an $RG$-module $M$ are called {\it equivalent} if there  a constant $C >0$ such that for all $m\in M$,
$$C^{-1}\|m\|_1\leq \|m\|_2\leq C\|m\|_1.$$

Lemma  \ref{fg_fill_norms} implies the following result.

\begin{lemma}\label{norm_equiv} Any two standard norms on a finitely generated $RG$-module are equivalent.
\end{lemma}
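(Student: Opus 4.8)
The plan is to obtain this as an immediate formal consequence of Lemma \ref{fg_fill_norms}, applied to the identity homomorphism. Let $M$ be a finitely generated $RG$-module, and let $\|\cdot\|_1$ and $\|\cdot\|_2$ be two standard norms on $M$, say arising from surjections $\rho_i : F_i \twoheadrightarrow M$ with each $F_i$ a finitely generated free $RG$-module equipped with the $\ell^1$-norm with respect to a chosen basis.

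First I would apply Lemma \ref{fg_fill_norms} to the identity map $\mathrm{id}_M : M \to M$, taking the source to be $M$ with standard norm $\|\cdot\|_1$ and the target to be $M$ with standard norm $\|\cdot\|_2$. Since $M$ is finitely generated and both norms are standard, the lemma produces a constant $C_1 > 0$ such that $\|m\|_2 \le C_1\|m\|_1$ for all $m \in M$. Next I would run the same argument with the two norms interchanged, yielding a constant $C_2 > 0$ with $\|m\|_1 \le C_2\|m\|_2$ for all $m \in M$. Setting $C = \max\{C_1, C_2\}$ then gives
$$
C^{-1}\|m\|_1 \le \|m\|_2 \le C\|m\|_1 \qquad \text{for all } m \in M,
$$
which is exactly the definition of equivalence of the two filling norms.

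Since the entire argument is a two-step invocation of the already-established Lemma \ref{fg_fill_norms}, there is no genuine obstacle here; the only thing to observe is that the hypotheses of that lemma are symmetric in source and target, so that applying it to $\mathrm{id}_M$ in both directions upgrades a one-sided bound to a genuine equivalence. (The substantive content, namely that homomorphisms of finitely generated $RG$-modules are bounded with respect to standard norms, is precisely what Lemma \ref{fg_fill_norms} supplies.)
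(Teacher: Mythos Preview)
Your proposal is correct and matches the paper's approach exactly: the paper simply states that Lemma~\ref{fg_fill_norms} implies the result, and your argument---applying that lemma to $\mathrm{id}_M$ in both directions and taking $C=\max\{C_1,C_2\}$---is precisely the intended unpacking of that implication.
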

 By Lemma \ref{norm_equiv}, the standard norm on a finitely generated $RG$-module is well-defined up to linear equivalence.

We will need the following result of Gersten in \cite{Gersten_cd}. 

\begin{prop}[{\cite[Proposition 4.4]{Gersten_cd}}]\label{free_split}Assume that there is a short exact sequence of $RH$-modules
$$0\to M\xrightarrow{\iota}N\xrightarrow{\pi} P\to 0,$$
where  $M$ is a finitely generated $RH$-module and equipped with  the standard norm, $N$ is $RH$-free equipped with the associated $\ell^1$-norm, and $P$ is projective. Then there is an $RH$-retraction $\rho: N\to M$ for $\iota$ and a constant $C>0$ so that $\|\rho(x)\|\leq C|x|_{1}$ for all $x\in N$. 
\end{prop}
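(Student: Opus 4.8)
The plan is to reduce to the finitely generated situation, where Lemma~\ref{fg_fill_norms} applies verbatim. Let $\Lambda$ be the free $RH$-basis of $N$ with respect to which the $\ell^1$-norm $|\cdot|_1$ is taken. Since $M$ is finitely generated and $\iota$ is injective, the $\iota$-images of a finite generating set of $M$ are each supported on only finitely many elements of $\Lambda$; let $S\subseteq\Lambda$ be the union of these (finite) supports, and set $N_0=\bigoplus_{\lambda\in S}RH\lambda$ and $N_1=\bigoplus_{\lambda\in\Lambda\setminus S}RH\lambda$, so that $N=N_0\oplus N_1$, the submodule $N_0$ is a finitely generated free $RH$-module, and $\iota(M)\subseteq N_0$. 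Two bookkeeping facts are recorded for later use: the $\ell^1$-norm of $N_0$ with respect to $S$ is a standard norm on $N_0$ (it is the filling norm associated to the identity map $N_0\to N_0$), and $|x_0+x_1|_1=|x_0|_1+|x_1|_1$ whenever $x_0\in N_0$ and $x_1\in N_1$, since $N_0$ and $N_1$ are spanned by disjoint portions of the $R$-basis $\Omega$.

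Next I would check that the quotient $N_0/\iota(M)$ is projective. Because $\iota(M)\subseteq N_0$ and $N_0\cap N_1=0$, passing to the quotient $P=N/\iota(M)$ carries $N_1$ isomorphically onto a complement of $N_0/\iota(M)$, giving an $RH$-module decomposition $P\cong\bigl(N_0/\iota(M)\bigr)\oplus N_1$. As $P$ is projective and a direct summand of a projective module is projective, $N_0/\iota(M)$ is projective. Hence the short exact sequence
$$0\to M\xrightarrow{\iota}N_0\xrightarrow{\pi_0}N_0/\iota(M)\to 0$$
of $RH$-modules splits, producing an $RH$-linear retraction $\rho_0\colon N_0\to M$ with $\rho_0\circ\iota=\mathrm{id}_M$.

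Then I would observe that $\rho_0$ is a homomorphism between the finitely generated $RH$-modules $N_0$ and $M$, equipped respectively with the standard norm $|\cdot|_1$ and the given standard norm $\|\cdot\|$ on $M$, so Lemma~\ref{fg_fill_norms} supplies a constant $C>0$ with $\|\rho_0(x_0)\|\le C|x_0|_1$ for all $x_0\in N_0$. Define $\rho\colon N\to M$ by letting $\rho$ agree with $\rho_0$ on $N_0$ and vanish on $N_1$. Since $\iota(M)\subseteq N_0$, we have $\rho\circ\iota=\rho_0\circ\iota=\mathrm{id}_M$, so $\rho$ is a retraction for $\iota$; and for $x=x_0+x_1\in N$ with $x_i\in N_i$ one gets $\|\rho(x)\|=\|\rho_0(x_0)\|\le C|x_0|_1\le C|x|_1$, which is the desired bound.

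The only step with genuine content is the passage from projectivity of $P$ to projectivity of $N_0/\iota(M)$, which is exactly what allows $M$ to be split off inside the finitely generated free module $N_0$; everything else is bookkeeping with the $\ell^1$-norm. The mild subtleties to be careful about are that the restricted $\ell^1$-norm on $N_0$ really is a standard norm (so that Lemma~\ref{fg_fill_norms} is applicable) and that the $\ell^1$-norm splits additively across $N=N_0\oplus N_1$ (so that extending $\rho_0$ by zero does not worsen the constant).
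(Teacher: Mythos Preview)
The paper does not supply its own proof of this proposition; it is quoted verbatim from \cite[Proposition 4.4]{Gersten_cd} and used as a black box. So there is nothing in the paper to compare your argument against.

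That said, your proof is correct and is the natural argument. The key step---showing that $N_0/\iota(M)$ is projective because it appears as a direct summand of $P\cong (N_0/\iota(M))\oplus N_1$---is sound: since $\iota(M)\subseteq N_0$ and $N_0\cap N_1=0$, the quotient map $N\to P$ restricts to an injection on $N_1$ whose image complements that of $N_0$, giving the claimed splitting of $P$. Once $M$ splits off inside the finitely generated free module $N_0$, Lemma~\ref{fg_fill_norms} delivers the bound, and extending by zero on $N_1$ costs nothing because the $\ell^1$-norm is additive across the decomposition $N=N_0\oplus N_1$. Your bookkeeping remarks about the restricted $\ell^1$-norm on $N_0$ being a standard norm are exactly what is needed to make Lemma~\ref{fg_fill_norms} applicable.
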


We are ready to prove the main result of this section.

\begin{prop} \label{compare} Let $G$ be a group with  $\cd_{R} \, G =n\geq 1$. Suppose $H$ is a subgroup of $G$ of type $FP(R)$ and 
$P_*\twoheadrightarrow R$ is a resolution by finitely generated  free $RH$-modules. Then, the following hold.
\begin{enumerate}
\item There  is  a free $RG$-resolution $F_*\twoheadrightarrow R$ where $Z_{n-1}(F_*)$ is $RG$-free equipped with the associated $\ell^1$-norm and an injective $RH$-morphism $\iota_*: P_* \to F_*$.
\item There is a constant $C>0$,  and an $RH$-retraction  $\rho: Z_{n-1}(F_*)\twoheadrightarrow Z_{n-1}(P_*)$ for $\iota_{n-1}:Z_{n-1}(P_*)\hookrightarrow Z_{n-1}(F_*)$ such that 
$\|{\rho}(y)\|\leq C|y|_{1}$ for all $y\in Z_{n-1}(F_*)$. 
\end{enumerate}
\end{prop}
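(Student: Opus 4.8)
The plan is to build the resolution $F_*$ by starting from a free $RG$-resolution of $R$ and surgically replacing its bottom $n$ terms so that it contains $P_*$ as an $RH$-subcomplex and has the right freeness and norm properties at level $n-1$. First I would pick any free $RG$-resolution $C_* \twoheadrightarrow R$; since $\cd_R G = n$, the kernel $Z_{n-1}(C)$ is projective as an $RG$-module (Schanuel-type argument together with the fact that a module of projective dimension $\le n-1$ sitting as the $(n-1)$st syzygy of a free resolution is projective). The key idea is to compare $C_*$ with the $RH$-resolution $P_*$ over $RH$: by the comparison theorem there is an $RH$-chain map $P_* \to C_*$ lifting $\mathrm{id}_R$, and by induction this can be arranged to be injective in each degree, with each $C_i/f_i(P_i)$ again $RH$-free — this is where the hypothesis that $P_*$ is a resolution by \emph{finitely generated} free $RH$-modules is used, so that the complements can be split off. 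One then splices: set $F_i = C_i$ for $i \ge n$ (or rather, take a suitable $RG$-resolution whose $(n-1)$st syzygy is \emph{free}, not merely projective — one can always enlarge by adding a free summand, i.e. replace $Z_{n-1}$ by $Z_{n-1} \oplus Q$ for $Q$ free with $Z_{n-1} \oplus Q$ free, using that projective $\oplus$ suitable free is free via Eilenberg swindle if $Z_{n-1}$ is countably generated, or more carefully so that the extension through degree $n$ still works). After this adjustment, $Z_{n-1}(F)$ is $RG$-free; equip it with the $\ell^1$-norm coming from a chosen $RG$-basis. This proves part (1), with $f_* : P_* \to F_*$ the injective $RH$-morphism.

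For part (2), I would apply Proposition~\ref{free_split} to the short exact sequence of $RH$-modules
$$0 \to Z_{n-1}(P) \xrightarrow{\iota} Z_{n-1}(F) \xrightarrow{\pi} Z_{n-1}(F)/Z_{n-1}(P) \to 0.$$
Here $\iota$ is induced by $f_*$ (one checks $f_*$ carries cycles to cycles, hence $Z_{n-1}(P)$ into $Z_{n-1}(F)$, and injectivity of $f_*$ gives injectivity of $\iota$). The hypotheses of Proposition~\ref{free_split} must be verified: $Z_{n-1}(P)$ is finitely generated because $H$ is of type $FP(R)$ and $P_*$ is a finite-type resolution, so $Z_{n-1}(P)$ is a kernel of a map between finitely generated modules over a Noetherian-enough setup — more precisely, since $P_*$ is a resolution by finitely generated frees and has finite length (or at least $Z_{n-1}(P)$ is the image of $P_n \to P_{n-1}$, hence finitely generated), and we equip it with its standard norm; $Z_{n-1}(F)$ is $RG$-free hence $RH$-free (a free module over $RG$ is free over $RH$ on the basis $\{g\alpha\}$ with $g$ ranging over coset representatives), with the associated $\ell^1$-norm, which is exactly the restriction of the $\ell^1$-norm on $Z_{n-1}(F)$; and the quotient $Z_{n-1}(F)/Z_{n-1}(P)$ is projective over $RH$ — this follows from the splicing construction, since $C_i/f_i(P_i)$ was arranged to be $RH$-free and one takes a syzygy. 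Proposition~\ref{free_split} then yields a retraction $\rho : Z_{n-1}(F) \to Z_{n-1}(P)$ (note: the statement should read $\rho : Z_{n-1}(F) \to Z_{n-1}(P)$, a retraction for $\iota$) with $\|\rho(y)\| \le C|y|_1$ for all $y \in Z_{n-1}(F)$, which is the desired bound.

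\textbf{Main obstacle.} The delicate point is part (1): arranging simultaneously that (a) $P_*$ embeds $RH$-equivariantly into $F_*$ with $RH$-free cokernel in each degree, and (b) the $(n-1)$st syzygy $Z_{n-1}(F)$ is genuinely $RG$-\emph{free}, not just projective, while (c) keeping $F_*$ an honest $RG$-resolution that agrees with the finite-type picture below degree $n$. The tension is that forcing $Z_{n-1}(F)$ to be free may require adding free summands, and these additions must be made compatibly at degree $n$ (adjusting $F_n$ and the boundary map $F_n \to F_{n-1}$) without disturbing the embedding of $P_*$; I expect this to require a careful inductive construction where at each stage one builds $F_i$ as $f_i(P_i)$ plus a free $RG$-complement chosen large enough to surject onto the syzygy, using that over $RG$ one may always replace a projective syzygy module $Z$ by $Z \oplus (\text{free})$ which, after absorbing into the next free term, is again free. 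The verification that the resulting complex is exact and that the norms behave — in particular that the $\ell^1$-norm on $Z_{n-1}(F)$ as an $RG$-module restricts to the $\ell^1$-norm needed for the $RH$-structure in Proposition~\ref{free_split} — is the technical heart.
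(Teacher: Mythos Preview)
Your strategy for part (2) matches the paper's: once the short exact sequence
\[
0 \to Z_{n-1}(P) \xrightarrow{\iota} Z_{n-1}(F) \to Z_{n-1}(F)/Z_{n-1}(P) \to 0
\]
is in place with the quotient $RH$-projective, Proposition~\ref{free_split} finishes immediately. The paper establishes projectivity of the quotient by forming $Q_* := F_*/P_*$, noting from the long exact sequence that $Q_*$ is acyclic, and then reading off inductively that each $Z_i(Q)$ is $RH$-projective from the short exact sequences $0 \to Z_{i+1}(Q) \to Q_{i+1} \to Z_i(Q) \to 0$.

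For part (1), however, you are missing the device that makes the construction go through. You propose to take a free $RG$-resolution $C_*$ and arrange, by induction, that the comparison map $f_*: P_* \to C_*$ is injective with $RH$-free cokernel in each degree. This is where the argument breaks down: there is no reason a lift of $\mathrm{id}_R$ can be made injective, and your suggested fix of building $F_i$ as ``$f_i(P_i)$ plus a free $RG$-complement'' is ill-posed because $f_i(P_i)$ is only an $RH$-submodule, not an $RG$-submodule, so it cannot serve as a summand of a free $RG$-module. You correctly flag this as the main obstacle, but you do not overcome it.

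The paper sidesteps the issue entirely by passing to the \emph{algebraic mapping cylinder} $C(f)_*$ of the comparison map $f_*: P_* \to F'_*$, with $C(f)_i = P_i \oplus P_{i-1} \oplus F'_i$ (the $P$-terms implicitly induced up to $RG$). The embedding $P_* \hookrightarrow C(f)_*$ into the first summand is then injective \emph{by construction}, with quotient manifestly $RH$-free, regardless of any property of $f_*$. Since $C(f)_*\twoheadrightarrow R$ is a free $RG$-resolution and $\cd_R G = n$, the module $Z_{n-1}(C(f))$ is $RG$-projective; the Eilenberg swindle then adds a free summand $N$ in degrees $n$ and $n-1$ to produce $F_*$ with $Z_{n-1}(F)$ free, without disturbing the embedding of $P_*$. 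This is the missing idea.
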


\begin{proof}  The induction of $P_*\twoheadrightarrow R$ gives the free $RG$-resolution $\Ind P_*\twoheadrightarrow \Ind R$. Let $F'_*\twoheadrightarrow R$ be a free $RG$-resolution.  By the Fundamental Lemma of Homological Algebra \cite[Lemma I.7.4]{brown_book}, there is an $RG$-morphism $f_*:\Ind P_*\to F'_*$ extending the augmentation map $\Ind R\twoheadrightarrow R \,:\, g\otimes r\mapsto r, \, \forall g\in G, \forall r\in R$. Denote by $C(f)_*$ the algebraic mapping cylinder of $f_*$ \cite[1.5.5]{weibel_book}. Recall that  
$$C(f)_i= \Ind P_i\oplus \Ind P_{i-1}\oplus F'_i$$ for all $i> 0$, $C(f)_0= \Ind P_0\oplus F'_0$. Since the natural inclusion $F'_*\hookrightarrow C(f)_*$ is a chain homotopy equivalence \cite[1.5.4]{weibel_book}, we have that  $C(f)_*\twoheadrightarrow R$ is a free $RG$-resolution. Since  $\cd_{R} \, G =n$, the submodule $Z_{n-1}(C(f)_*)$ of $(n-1)$-cycles in $C(f)_{n-1}$ is $RG$-projective. By applying the Eilenberg swindle \cite[Lemma VIII.2.7]{brown_book}, we can add an $RG$-free module $N$ as a direct summand to both $C(f)_n$ and $C(f)_{n-1}$, so that the resulting complex $F_*$ stays exact and $Z_{n-1}(F_*)\cong N$. The complex $P_*$  embeds in $F_*$ by the maps $j_{P_i}$ that send $P_i$ naturally into the $\Ind P_i$ summand in $F_i$ for each $i$. Denote this embedding by $\iota_*: P_*\hookrightarrow F_*$. Observe that by construction and Lemma \ref{lem:ind}, $\iota_i$ is an embedding of $P_i$ as a direct $RH$-module summand in $F_i$, for all $i\geq 0$. Hence $\iota_i$ splits for all $i\geq 0$. We form the quotient complex
$$
Q_{*}:=F_{*}/{P_*},
$$
and consider the  short exact sequence of $RH$-modules:
\begin{equation}\label{eq:sesqdef}
\begin{tikzcd}
0 \arrow[r] & P_* \arrow[r,hook, "\iota_*"] & F_{*} \arrow[r,two heads] & Q_{*} \arrow{r} & 0.
\end{tikzcd}
\end{equation}
\noindent Since $\iota_i$ splits and $F_i$ is $RG$-free and in particular $RH$-free, $Q_i$ is $RH$-projective for all $i\geq 0$. The long exact sequence in homology associated to (\ref{eq:sesqdef}), shows that the complex $Q_*$ is acyclic.
Since 

$$
\begin{tikzcd}
0 \arrow[r] & Z_{1}(Q_*) \arrow[r] & Q_1 \arrow[r] & Q_0 \arrow[r] & 0
\end{tikzcd}
$$

\noindent is exact and  $Q_*$ is $RH$-projective, it follows that $Z_{1}(Q_*)$ is a projective  $R H$-module. Proceeding by induction, and considering the short exact sequence
$$
\begin{tikzcd}
0 \arrow[r] & Z_{i}(Q_*) \arrow[r] & Q_{i} \arrow[r] & Z_{i-1}(Q_*) \arrow[r] & 0
\end{tikzcd}
$$
 for all $i\geq 2$, we obtain that $Z_{i}(Q_*)$ is a  projective  $R H$-module for all $i\geq 1$.  Therefore, we obtain the short exact sequence:
\begin{equation}\label{eq:1cycleses}
\begin{tikzcd}
0 \arrow[r] & Z_{n-1}(P_*) \arrow[r,hook, "\iota_{n-1}"] & Z_{n-1}(F_*) \arrow[r] & Z_{n-1}(Q_*) \arrow[r] & 0
\end{tikzcd}
\end{equation}
of projective $R H$-modules.  Since $H$ is of type $FP(R)$, by \cite[VIII.4.3]{brown_book}, $Z_{n-1}(P_*)$ is a finitely generated $RH$-module.  Since $Z_{n-1}(F_*)$ is a free $RH$-module, by Proposition \ref{free_split},  there is a constant $C>0$,  and an $RH$-retraction  $\rho: Z_{n-1}(F_*)\twoheadrightarrow Z_{n-1}(P_*)$ of $\iota_{n-1}$ such that 
$\|{\rho}(y)\|\leq C|y|_{1}$ for all $y\in Z_{n-1}(P_*)$. 
\end{proof}

\section{$\ell^{\infty}$-cohomology of groups}\label{sec:linf}

We start this section with preliminary results on $\ell^{\infty}$-cohomology of groups and derive our main result, Theorem \ref{intro_l_inf_gen}. 

Throughout, let $R$ be a subring of $\R$ containing $\Z$. We equip $R$ with the norm which is the restriction of the usual norm on $\R$. A {\it norm} on an $R$-module $V$ is a function $\|\cdot\|_V:V\to \R$, so that for any $m, m'\in V$ and $r\in R$, 
\begin{enumerate}[label=(\arabic*), nosep]
    \item $\|m\|_V\geq 0$ and $\|m\|_V=0$ implies $m=0$,
    \item $\|m+m'\|_V\leq \|m\|_V+\|m'\|_V$,
    \item $\|rm\|_V\leq |r|\|m\|_V$.
\end{enumerate}

Let $G$ be a group. A {\it normed} $RG$-module is a normed $R$-module with an isometric $R$-linear action of $G$. Let $V$ be a  normed $RG$-module. We denote by
$$\ell^{\infty}(G, V)=\{f:G\to V \;|\; f(G)\mbox{ is a bounded subset of } V\}$$ the $RG$-module of bounded functions defined on $G$ with values in $V$.
The  {\it $\ell^{\infty}$-cohomology of a group $G$} can be defined as the ordinary (integral) cohomology of $G$ with coefficients in $\ell^{\infty}(G, V)$,
$$H^*_{(\infty)}(G; V):=H^*(G; \ell^{\infty}(G, V))=\mbox{Ext}_{\Z G}(\Z, \ell^{\infty}(G, V)).$$
This definition of $\ell^{\infty}$-cohomology in terms of ordinary group cohomology  is due to  Wienhard \cite{wienhard}.  It reduces to Gersten's definition when $G$ is assumed to be of type $F_n$, a necessary assumption in the original definition.  

\begin{rmk}\label{rmk:quasi_isom_inv}
\normalfont As noted in \cite{Milizia}, two  finitely generated quasi-isometric groups have isomorphic $\ell^{\infty}$-cohomologies. This can be seen by the fact that $\ell^{\infty}$-cohomology of a finitely generated group coincides with Elek's cohomology theory \cite{Elek} of its Cayley graph. The latter is invariant under quasi-isometries of graphs \cite[Theorem 2.1]{Elek}. In fact, Elek  defines $\ell^{\infty}$-cohomology only with trivial coefficients $V=\R$, but both \cite[Definition 2.1]{Elek} and \cite[Theorem 2.1]{Elek} generalise verbatim to any normed $\R$-module coefficients $V$. Alternatively, for general coefficients $V$, the quasi-isometry invariance of $\ell^{\infty}$-cohomology follows from Corollary 3.39 of \cite{Li2018}.   When $G$ is of type $F_n$, quasi-isometric invariance of $H^n_{(\infty)}(G; \R)$ was already established in \cite{gersten_boundedcocycles}. 
\end{rmk}

 \begin{definition}\label{def_l_inf}{\normalfont Suppose $P_*$ be a projective $R G$-resolution of $R$. We say a cochain $\beta\in {Hom}_R(P_i, V)$ is {\it bounded on $G$-orbits} if  for all $\sigma \in P_i$, the subset $\{ \beta(g\cdot \sigma) \;|\; g\in G\}\subset V$ is bounded. We denote by ${Hom}^{bG}_R(P_i, V)$ the subspace containing all such co-chains.}
 \end{definition}

 The following observation is essential for our purposes.

 \begin{lemma}\label{lem:equiv_infty} ${Hom}^{bG}_R(P_*, V)$ is a co-chain complex whose cohomology is naturally isomorphic to $H^*_{(\infty)}(G; V)$. 
 \end{lemma}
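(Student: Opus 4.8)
The plan is to identify $H^*_{(\infty)}(G;V) = H^*(G;\ell^\infty(G,V))$ with the cohomology of $\operatorname{Hom}_{RG}(P_*, \ell^\infty(G,V))$ for any projective $RG$-resolution $P_*\twoheadrightarrow R$, and then exhibit a natural isomorphism of cochain complexes (or at least a natural chain map inducing an isomorphism) between $\operatorname{Hom}_{RG}(P_*,\ell^\infty(G,V))$ and $\operatorname{Hom}^{bG}_R(P_*,V)$. The first step is standard: group cohomology with coefficients in the $RG$-module $\ell^\infty(G,V)$ can be computed from any projective resolution, so this is just invoking the Fundamental Lemma of Homological Algebra; one should remark that $\ell^\infty(G,V)$ is indeed an $RG$-module under the action $(g\cdot\phi)(x) = g\cdot\phi(g^{-1}x)$, with no norm or topology used on this side.

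The heart of the argument is the adjunction-type identification $\operatorname{Hom}_{RG}(P_i,\ell^\infty(G,V)) \cong \operatorname{Hom}^{bG}_R(P_i,V)$, natural in $P_i$ and compatible with differentials. First I would define the map: given an $RG$-morphism $\Phi: P_i \to \ell^\infty(G,V)$, set $\beta_\Phi(\sigma) := \Phi(\sigma)(e) \in V$, the evaluation at the identity element $e\in G$. Conversely, given $\beta \in \operatorname{Hom}^{bG}_R(P_i,V)$, define $\Phi_\beta(\sigma)$ to be the function $G\to V$, $x\mapsto \beta(x^{-1}\sigma)$; this lands in $\ell^\infty(G,V)$ precisely because $\beta$ is bounded on $G$-orbits, and the orbit-boundedness condition is exactly what makes this well-defined. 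One checks these are mutually inverse $R$-linear bijections, that $\Phi_\beta$ is $RG$-equivariant (a short computation with the diagonal action on $\ell^\infty(G,V)$), and that $\beta_\Phi$ is $R$-linear with image in $\operatorname{Hom}^{bG}_R(P_i,V)$ — the latter because $\beta_\Phi(g\sigma) = \Phi(g\sigma)(e) = (g\Phi(\sigma))(e) = g\cdot(\Phi(\sigma)(g^{-1}))$, and $\{\Phi(\sigma)(x): x\in G\}$ is bounded since $\Phi(\sigma)\in\ell^\infty(G,V)$ and $G$ acts isometrically on $V$. Finally, naturality in $P_i$ gives commutation with the coboundary maps $\partial^*$, so we get an isomorphism of cochain complexes and hence of cohomology.

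The main obstacle, and the point to treat carefully rather than wave at, is verifying that the correspondence genuinely respects the module and boundedness structures in both directions — in particular that $\Phi_\beta$ is $RG$-linear (not merely $R$-linear) and that its values really lie in $\ell^\infty(G,V)$, which is where the hypothesis ``bounded on $G$-orbits'' is consumed, together with the isometry of the $G$-action on $V$. I would also note explicitly that the isomorphism is natural with respect to chain maps of resolutions, so it is independent of the choice of $P_*$ and agrees with the identification obtained from the standard bar resolution, which is what pins down the naturality of the isomorphism with $H^*_{(\infty)}(G;V)$ as defined. Everything else — that $\operatorname{Hom}^{bG}_R(P_*,V)$ is closed under the coboundary (immediate, since $\partial$ is $RG$-linear so $\partial^*$ preserves orbit-boundedness) — is routine.
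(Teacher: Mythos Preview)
Your approach is essentially identical to the paper's: define mutually inverse maps between $\operatorname{Hom}_{RG}(P_i,\ell^\infty(G,V))$ and $\operatorname{Hom}^{bG}_R(P_i,V)$ via evaluation at the identity and its inverse, then conclude. Two small points the paper handles explicitly that your outline glosses: first, your inverse formula $\Phi_\beta(\sigma)(x)=\beta(x^{-1}\sigma)$ is only $RG$-equivariant when $G$ acts trivially on $V$; for general normed $RG$-modules the paper uses $g\mapsto g\cdot\beta(g^{-1}\sigma)$ (the extra twist is exactly what makes the verification go through, and the paper then remarks afterwards that the action on $V$ is irrelevant). Second, since $H^*_{(\infty)}(G;V)$ is defined as $H^*(G;\ell^\infty(G,V))$ over $\Z G$ while $P_*$ is a projective $RG$-resolution of $R$, the paper invokes restriction/extension-of-scalars adjointness to pass from $H^*_R(G;-)$ to $H^*(G;-)$; you should flag this rather than absorb it into ``standard''.
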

 \begin{proof}  The boundary operator on $P_*$ gives a co-boundary operator on ${Hom}_R(P_*, V)$ which restricts to ${Hom}^{bG}_R(P_*, V)$.
 
We claim that ${Hom}^{bG}_R(P_i, V)$ is naturally isomorphic to ${Hom}_{RG}(P_i, \ell^{\infty}(G,V))$. For each $i\geq 0$, define
$$\Phi_i:{Hom}_{RG}(P_i, \ell^{\infty}(G, V))\to {Hom}^{bG}_R(P_i, V), \; f\mapsto (x\mapsto f(x)(1_G)),
$$
$$\Psi_i:{Hom}^{bG}_{R}(P_i, V)\to {Hom}_{RG}(P_i, \ell^{\infty}(G, V)), \; \theta\mapsto (x\mapsto (g\mapsto g\theta(g^{-1}x))).
$$
\noindent One can check that $\Phi_i$ and $\Psi_i$ are well-defined co-chain maps and are inverses of each other. This proves the claim.
Thus, the cohomology of ${Hom}^{bG}_R(P_i, V)$ is naturally isomorphic to $H^*_R(G; \ell^{\infty}(G, V))=\mbox{Ext}_{R G}(R, \ell^{\infty}(G, V))$. 

Let $F_*$ be a free $\Z G$-resolution of $\Z$.
By \cite[III.3.3]{brown_book}, we get the natural isomorphism
$${Hom}_{RG}(RG\otimes_{\Z G} F_i, \ell^{\infty}(G, V)) \cong {Hom}_{\Z G}(F_i, \ell^{\infty}(G, V)),$$ which shows that $H^*_R(G; \ell^{\infty}(G, V))$ is naturally  isomorphic to $H^*(G; \ell^{\infty}(G, V))$.
  \end{proof}
 
  \begin{rmk}{\normalfont
 It follows from Lemma \ref{lem:equiv_infty} that the $G$-action on the coefficient module $V$ has no bearing on the $\ell^{\infty}$-cohomology of $G$, i.e. two  different $G$-actions  on $V$ induce isomorphic cohomology groups, a fact that can also be seen directly from the definition. It therefore suffices to consider coefficients that are simply normed $R$-modules with trivial $G$-action.}
 \end{rmk}

We are now ready to prove our main result.

 \begin{theorem}\label{l_inf_gen}  Let $R$ be a subring of $\Q$ containing $\Z$. Suppose $H$ is a  group of type $FP_n(R)$ and is a subgroup of $G$ with $\cd_R \, G = n<\infty$. Then the restriction homomorphism $H^n_{(\infty)}(G; V)\to H^n_{(\infty)}(H; V)$ is surjective for all normed $R$-modules $V$.
\end{theorem}

\begin{rmk}{\normalfont Note that since $H$ is a subgroup of $G$, we have $\cd_R \, H \leq \cd_R \, G =n$. By Section \ref{sec:prelim}, it follows that $H$ is of type $FP_n(R)$ if and only if it is of type $FP(R)$.}\end{rmk}

\begin{proof}[Proof of Theorem \ref{l_inf_gen}] Since the $n=0$ case is trivially satisfied, we assume $n\geq 1$. Since $H$ is of type of type $FP(R)$, it is of type $FP_{\infty}(R)$. So, there is a free $RH$-resolution $P_*\twoheadrightarrow R$ by finitely generated free $RH$-modules (possibly of infinite length) \cite[VIII.4.5]{brown_book}. We use the notation of Proposition \ref{compare}. Observe that the truncation of the resolutions $P_*$ and $F_*$ at $n$,
$$0\to Z_{n-1}(P_*)\to P_{n-1}\to \dots \to P_0\twoheadrightarrow R$$
and 
$$0\to Z_{n-1}(F_*)\to F_{n-1}\to \dots \to F_0\twoheadrightarrow R$$
are $RH$- and $RG$-projective resolutions of $R$, respectively.  Let $\alpha\in Hom^{bH}_R(Z_{n-1}({P_*}), V)$ be a cocycle which is bounded on $H$-orbits. Then $\beta:=\alpha\circ \rho\in Hom_R(Z_{n-1}({F_*}), V)$. Since ${\iota}^{n-1}(\beta)=\alpha$, it is left to show that $\beta$ is bounded on $G$-orbits.

Let $\sigma\in Z_{n-1}(F_*)$.  By Proposition \ref{compare}, there is a constant $C>0$ such that $\forall g\in G$, 
$$\|\rho(g\cdot \sigma)\|\leq C|g\cdot \sigma|_{1}= C|\sigma|_{1}.$$
Now, let $\omega_1, \dots, \omega_k$ be the standard generators of  $P_{n}$ as a free $RH$-module. Consider $\lambda \in P_n$ so that $\partial_n^P(\lambda)=\rho(g\cdot \sigma)$.  Then $\lambda=q_1 \omega_1+\cdots +  q_k\omega_k$ with  $q_i\in RH$. We can write each $q_i=\sum_j r_{ij}h_{ij}$ where $h_{ij}\in H$ and $r_{ij}\in R$. Then
$$|\lambda|_1=\sum_{j} |r_{1j}|+\cdots +\sum_{j} |r_{kj}|.$$

\noindent By the above inequality and the definition of the filling norm on $\rho(g\cdot \sigma)$, it follows that for any $\varepsilon >0$, there is such $\lambda$ so that 
$$\sum_{j} |r_{1j}|+\cdots +\sum_{j} |r_{kj}|\leq C|\sigma|_{1}+\varepsilon.$$ 
It follows that
\begin{align*}
\|\beta(g\cdot \sigma)\|_V&=\|\alpha(\rho(g\cdot \sigma))\|_V\\
&=\|\alpha(\partial_n^P(\lambda))\|_V\\
&=\| \alpha(q_1\partial_n^P(\omega_1))+\cdots +  \alpha(q_k\partial_n^P(\omega_k))\|_V\\
&\leq \| \alpha (q_1\partial_n^P(\omega_1))\|_V+\cdots +  \|\alpha (q_k\partial_n^P(\omega_k))\|_V\\
&\leq \sum_{j} |r_{1j}| \|\alpha(h_{1j}\cdot\partial_n^P(\omega_1))\|_V+\cdots +  \sum_{j} |r_{kj}| \|\alpha(h_{kj}\cdot\partial_n^P(\omega_k))\|_V\\
&\leq \left(\sum_{j} |r_{1j}|+\cdots +\sum_{j} |r_{kj}|\right)\cdot \max\{\|\alpha(h_{ij}\cdot \partial_n^P(\omega_i))\|_V \; | \; i, j\} \\
&\leq (C|\sigma|_{1}+\varepsilon)\cdot \max\{\|\alpha(h_{ij}\cdot \partial_n^P(\omega_i))\|_V \; | \; i, j\},
\end{align*}

\noindent Let $L=\sup\{\|\alpha(h\cdot \partial_n^P(\omega_i))\|_V \; | \; 1\leq i\leq k, h\in H\}.$ Since $\alpha$ is bounded on $H$-orbits, $L<\infty$. We have established that $\|\beta(g\cdot \sigma)\|_V\leq LC|\sigma|_{1}$ for all $g\in G$. This shows that $\beta$ is bounded on $G$-orbits.
\end{proof}

\begin{rmk}{\normalfont The first example of a non-hyperbolic subgroup $H$ of type $F$ of a hyperbolic group $G$ was given in \cite{typefsubgroup}. By Theorem \ref{intro_Gersten_cohom_char},  we know that $H^2_{(\infty)}(H;\ell^{\infty}(\N,\R))\ne 0$. Yet, since $G$ is hyperbolic, $H^2_{(\infty)}(G;\ell^{\infty}(\N,\R))= 0$. This shows that Theorem \ref{l_inf_gen} does not generalise to degrees below the top degree, which in this example is $\cd \, G=5$.}
\end{rmk}

We conclude this section with an observation regarding relatively hyperbolic groups.

\begin{theorem}\label{exact_rel_hyp} Let $G$ be  a relatively hyperbolic group with amenable peripheral subgroups, then the sequence 
$$H^n_b(G;V)\xrightarrow{c^n} H^n(G; V)\xrightarrow{\iota^n} H^n_{(\infty)}(G;V)$$
\noindent is exact for all $n\geq 2$ and all dual normed $\R G$-modules $V$.
\end{theorem}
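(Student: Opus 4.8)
The plan is to exploit the standard dichotomy for relatively hyperbolic groups: bounded cohomology of $G$ relative to its peripheral family is well-behaved, and amenability of the peripherals lets one pass between $H^*_b(G;V)$ and $H^*_b(\mathcal{P};V)$. Concretely, I would first recall the long exact sequence of the group pair $(G,\mathcal{P})$ in bounded cohomology, together with the analogous sequence in ordinary cohomology and in $\ell^\infty$-cohomology, and set up the obvious ladder of comparison maps between these three sequences. Since each peripheral subgroup $P$ is amenable, $H^n_b(P;V)=0$ for $n\geq 1$ (for dual normed modules $V$, using that bounded cohomology of amenable groups vanishes with dual coefficients), so $H^n_b(G,\mathcal{P};V)\xrightarrow{\sim} H^n_b(G;V)$ for $n\geq 2$; this is Mineyev--Yaman-type input on relative bounded cohomology of relatively hyperbolic groups. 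The key structural fact I would invoke is that for a relatively hyperbolic group the comparison map $H^n_b(G,\mathcal P;V)\to H^n(G,\mathcal P;V)$ from \emph{relative} bounded cohomology to ordinary relative cohomology is surjective for $n\geq 2$ (this is the relative analogue of Mineyev's theorem, which for $n=2$ over $V=\R$ recovers the Frigerio--Sisto result cited in the remark).

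Next I would run a diagram chase on the ladder. Given a class $\xi\in H^n(G;V)$ with $\iota^n(\xi)=0$ in $H^n_{(\infty)}(G;V)$, the goal is to lift $\xi$ to $H^n_b(G;V)$. Using the isomorphism $H^n_b(G,\mathcal P;V)\cong H^n_b(G;V)$ and the surjectivity of the relative comparison map, it suffices to show that $\xi$ comes from $H^n(G,\mathcal P;V)$, i.e. that its image in $H^n(\mathcal P;V)=\bigoplus_P H^n(P;V)$ vanishes, \emph{and} that the chosen preimage in $H^n(G,\mathcal P;V)$ can be taken in the image of the relative comparison map compatibly with $\xi$. For the first point, I would use that the peripheral subgroups are amenable hence have vanishing $\ell^\infty$-cohomology in positive degrees (an amenable group has $H^n_{(\infty)}=0$ for $n\geq 1$, since $\ell^\infty(P,V)$ is relatively injective over $RP$ when $P$ is amenable — or, more elementarily, via the averaging/transfer argument), so the hypothesis $\iota^n_G(\xi)=0$ already forces the restriction of $\xi$ to each peripheral $P$ to be detected only through the boundary map in the pair sequence; combined with the commuting ladder in $\ell^\infty$-cohomology this pins $\xi$ down as coming from the relative group.

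The main obstacle I anticipate is purely homological-algebraic bookkeeping: making the three long exact sequences (bounded, ordinary, $\ell^\infty$) genuinely compatible via a single ladder, and checking that the relevant squares commute with the correct signs, so that the diagram chase produces a bounded primitive rather than merely an abstract class-counting coincidence. In particular I would need to be careful that ``dual normed $\R G$-module'' is exactly the hypothesis under which (a) bounded cohomology of amenable groups vanishes, and (b) the relative bounded cohomology of $(G,\mathcal P)$ behaves as expected — this is where the duality assumption is genuinely used, and where the Ivanov/Löh machinery for bounded cohomology with dual coefficients enters. Once the ladder and these vanishing statements are in place, exactness of $H^n_b\to H^n\to H^n_{(\infty)}$ falls out of the Five Lemma applied to the segment of the ladder around degree $n$, for every $n\geq 2$. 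I would finish by remarking, as the paper does, that for $n=2$, $V=\R$ this recovers \cite{FriSi}.
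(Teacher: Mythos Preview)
Your overall architecture matches the paper's proof: set up the group-pair diagram, use surjectivity of the relative comparison map $H^n_b(G,\mathcal H;V)\twoheadrightarrow H^n(G,\mathcal H;V)$ for relatively hyperbolic $G$ (the paper cites \cite{Franceschini2018} for this), and chase. But there is a genuine error at the pivotal step.

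You assert that an amenable group $P$ has $H^n_{(\infty)}(P;V)=0$ for $n\geq 1$, justifying this by ``$\ell^\infty(P,V)$ is relatively injective'' or an averaging argument. This is false: you are confusing $\ell^\infty$-cohomology with bounded cohomology. For instance $H^d_{(\infty)}(\Z^d;\R)\neq 0$ --- indeed the paper itself (Lemma~\ref{lem:PD}) shows that an amenable Poincar\'e duality group of dimension $d$ has \emph{nonzero} $\ell^\infty$-cohomology in degree $d$, and Proposition~\ref{non_vanish_gen} exploits exactly this. The averaging argument produces a $G$-invariant mean on $\ell^\infty(G)$, which kills bounded cochains; it does nothing for the cochain complex ${Hom}^{bG}_R(P_*,V)$. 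Moreover, even if your vanishing claim were true it would not by itself force the restriction of $\xi$ to $H^n(P;V)$ to vanish --- vanishing of the \emph{target} of $\iota''$ tells you nothing about the \emph{source}.

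The correct input about amenable peripherals, and the one the paper uses, is Milizia's result \cite[Theorem~1.1]{Milizia}: for amenable $P$ the comparison map $\iota'':H^n(P;V)\hookrightarrow H^n_{(\infty)}(P;V)$ is \emph{injective}. With this in hand the chase is immediate: $\iota(\xi)=0$ in $H^n_{(\infty)}(G;V)$ implies, by commutativity of the lower-right square, that the image of $\xi$ in $\prod_i H^n_{(\infty)}(H_i;V)$ vanishes; injectivity of $\iota''$ then gives vanishing in $\prod_i H^n(H_i;V)$; exactness of the middle row lifts $\xi$ to $H^n(G,\mathcal H;V)$; and surjectivity of $c'$ finishes. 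No Five Lemma is needed --- this is a two-step chase on a small diagram, exactly as the paper presents it. Once you replace your vanishing claim with Milizia's injectivity, your proof is the paper's proof.
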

\begin{proof} Denote by $\mathcal H= \{H_i\}_{i\in I}$ the collection of peripheral subgroups of $G$. There is a commutative diagram with exact middle row:
\begin{equation}\label{eq:compare}
\begin{tikzcd}
& H^n_b(G, \mathcal H; V) \arrow[d, two heads, "c'"] \arrow[r] & H^n_b(G; V)  \arrow[d,"c"] \\
& H^n(G, \mathcal H; V)  \arrow[r] & H^n(G; V)  \arrow[d, "\iota"] \arrow[r] & \prod_{i\in I} H^n(H_i; V)  \arrow[d, hook, "\iota''"]\\
 & & H^n_{(\infty)}(G; V) \arrow[r] & \prod_{i\in I} H^n_{(\infty)}(H_i; V) 
\end{tikzcd}
\end{equation}
We know that the homomorphism $c'$ is surjective \cite{Franceschini2018} and $\iota''$ is injective \cite[Theorem 1.1]{Milizia}. The result now follows by a diagram chase. 
\end{proof}

\section{Hyperbolicity criteria}\label{sec:hypcrit}

In \cite{mineyev}, numerous equivalent conditions for a finitely presented group $G$ to be hyperbolic are listed. In this section, we make the observation that (with a suitable restatement of the results) the finite presentability of $G$ may be relaxed to $FP_2(\Q)$. 

A group $G$ of type $FP_2(\Q)$ is finitely generated, so we may fix a finite generating set and form the Cayley graph $\Gamma$. 
The property $FP_2(\Q)$ now implies that $Z_1(\Gamma,\Q)$ is finitely generated as a $\Q G$-module. We choose a finite generating set $\{\alpha_i\}_{i\in I}$ for $Z_1(\Gamma,\Q)$ (as a $\Q G$-module) where each generator $\alpha_i$ lies in $C_1(\Gamma,\Z)$ and hence is represented by a circuit $w_i$ in $\Gamma$. Without loss of generality we may assume that there are no superfluous generators in this generating set.  We attach 2-cells with attaching maps $gw_i$ for each $g\in G$  and $i\in I$  and make the corresponding $2$-complex $X$ (where $X^{(1)}=\Gamma$). By construction,  $H_1(X;\Q)=0$ and $G$ acts freely and cocompactly on $X$. 

To distinguish filling norms with different coefficients, we adopt notation similar to \cite{mineyev}. For a 1-boundary with coefficients in $A=\Z, \Q, \R$, we define:
$$
\|b\|_{A}:=\inf{\mleft\{|a|_1 : a\in C_2(X,A),\ \partial_2(a)=b\mright\}}.
$$
We also have the norm $|\cdot|_1$ on 1-boundaries coming from the $\ell_1$-norm on 1-chains.

The following result extends Theorem 7 of \cite{mineyev}.

\begin{theorem}
Suppose the group $G$ is of type $FP_2(\Q)$, and $X$ is the complex above. Then the following are equivalent:
\begin{enumerate}[label=(\arabic*), nosep]
    \item \label{item1}{  $G$ is hyperbolic.}
    \item \label{item2}{  There exists $K\geqslant0$ such that for any $b\in B_1(X,\Z)$, we have the inequality:
$$
\|b\|_{\Q}\leqslant K|b|_1.
$$}
\item\label{item3}{  There exists $K\geqslant0$ such that for any $b\in B_1(X,\R)$, we have the inequality:
$$
\|b\|_{\R}\leqslant K|b|_1.
$$}
\end{enumerate}

\label{thm:mineyev}
\end{theorem}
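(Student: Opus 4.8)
\textbf{Proof plan for Theorem \ref{thm:mineyev}.}

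The plan is to prove the cycle of implications \ref{item1} $\Rightarrow$ \ref{item3} $\Rightarrow$ \ref{item2} $\Rightarrow$ \ref{item1}, since \ref{item3} $\Rightarrow$ \ref{item2} is immediate: the infimum defining $\|b\|_\Q$ is taken over a subset (rational $2$-chains sit inside real $2$-chains once we fix $b\in B_1(X,\Z)$), so $\|b\|_\R\leqslant\|b\|_\Q$ would go the wrong way — instead, what we use is that a rational filling can be \emph{scaled} from an integral $b$ to a real one with the same norm, or more simply that $\|b\|_\R\leqslant\|b\|_\Q$ is false but $\|b\|_\R \le \|b\|_\Z$ and a density/clearing-denominators argument shows the $\Q$-inequality in \ref{item2} forces the $\R$-inequality; I would phrase this carefully, noting that for $b\in B_1(X,\Z)$ any rational filling $a$ with $\partial_2 a = b$ has $|a|_1$ approximating $\|b\|_\R$ arbitrarily well because $C_2(X,\Q)$ is dense in $C_2(X,\R)$ in the relevant finite-dimensional-per-support sense and $\partial_2$ is continuous, so $\|b\|_\R=\|b\|_\Q$ for integral (indeed rational) $b$. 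Thus \ref{item2} $\Leftrightarrow$ \ref{item3} restricted to integral $b$, and since every real $1$-boundary is, up to scaling, close to rational ones supported on the same finite set, the quantified inequality over $B_1(X,\Z)$ upgrades to all of $B_1(X,\R)$ by homogeneity and a limiting argument.

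For \ref{item1} $\Rightarrow$ \ref{item3}: assuming $G$ hyperbolic, by Theorem \ref{intro_Gersten_cohom_char} (the $FP_2(\Q)$-generalisation of Gersten's characterisation, which the paper establishes in this section) we have $H^2_{(\infty)}(G;\R)=0$, equivalently $H^2_{(\infty)}(G;\ell^\infty(G,\R))=0$. By Lemma \ref{lem:equiv_infty} this cohomology is computed by orbitally-bounded cochains on the truncated resolution associated to $X$, whose degree-two term involves $Z_1(\Gamma,\R)$. The vanishing of $H^2_{(\infty)}$ says every orbitally-bounded cocycle on $Z_1$ is a coboundary; dualising, this is exactly the statement that the filling map $\partial_2\colon C_2(X,\R)\to B_1(X,\R)$ admits a bounded (with respect to $|\cdot|_1$ on the source mod kernel, i.e.\ the filling norm) linear section up to the standard norm on $B_1$ — and since $B_1(X,\Q)$, hence $B_1(X,\R)$, carries a standard norm ($X$ being cocompact so $B_1$ is finitely generated as a $\Q G$-module), Lemma \ref{norm_equiv} identifies $|\cdot|_1$ and $\|\cdot\|_\R$ up to the linear constant $K$. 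I would follow Mineyev's argument (\cite[Theorem 7]{mineyev}) essentially verbatim, the only change being that wherever finite presentability was used to guarantee $Z_1$ or $B_1$ is finitely generated over the group ring, we now cite $FP_2(\Q)$ and the construction of $X$ given just above the theorem.

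For \ref{item2} $\Rightarrow$ \ref{item1}: the homological linear isoperimetric inequality $\|b\|_\Q\leqslant K|b|_1$ on integral $1$-boundaries is precisely (after normalising the complex) the hypothesis that the rational homological filling function of degree $2$ is bounded by a linear function, so this implication is Theorem \ref{intro_thm:answer_AMP} (the positive answer to the Arora--Mart\'inez-Pedroza question), whose proof is developed in this section — I would either invoke it directly or, to keep the section self-contained, reprove it by the standard linear-isoperimetry-implies-hyperbolicity route: the linear bound on $\|\cdot\|_\Q$ gives a linear bound on the area of loops in the Cayley complex sense over $\Q$, which by a thin-bigons / divergence-of-geodesics argument (as in Mineyev, or via the equivalence of the homological and the usual Dehn function up to the $FP_2(\Q)$ hypothesis) yields a $\delta$-hyperbolic Cayley graph. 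The main obstacle I anticipate is the \ref{item2} $\Leftrightarrow$ \ref{item3} step being less trivial than it looks: one must be careful that scaling a real filling to clear denominators does not change the $|\cdot|_1$-norm (it does, by the scaling factor), so the clean statement is the homogeneous/limiting one, and I would want to state a small lemma that for $b$ supported on a finite set, $\|b\|_\R = \inf\{|a|_1 : a\in C_2(X,\Q),\ \partial_2 a = b\}$ whenever $b$ itself is rational, proved by continuity of $\partial_2$ on the finite-dimensional subspace of $2$-chains supported near the support of a minimising real filling. Everything else is bookkeeping: the translation between the orbitally-bounded-cochain picture of Lemma \ref{lem:equiv_infty} and the filling-norm inequalities, and the citation of the already-established $FP_2(\Q)$ versions of the hyperbolicity criteria.
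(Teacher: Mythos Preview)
Your plan has a genuine circularity problem. You propose to deduce \ref{item2} $\Rightarrow$ \ref{item1} from Theorem~\ref{intro_thm:answer_AMP}, and \ref{item1} $\Rightarrow$ \ref{item3} from ``the $FP_2(\Q)$-generalisation of Gersten's characterisation, which the paper establishes in this section''. But in the paper the logical flow runs the other way: Theorem~\ref{thm:mineyev} is the foundational result of Section~\ref{sec:hypcrit}, and both Theorem~\ref{thm:answer_AMP} (= Theorem~\ref{intro_thm:answer_AMP}) and Theorem~\ref{Gersten_l_hyp} are proved \emph{using} Theorem~\ref{thm:mineyev}. Concretely, the proof of Theorem~\ref{thm:answer_AMP} begins ``Suppose that $G$ is not hyperbolic. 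Then by Theorem~\ref{thm:mineyev}\ldots'', and the proof of Theorem~\ref{Gersten_l_hyp} ends ``Finally, by Theorem~\ref{thm:mineyev} we deduce that $G$ is hyperbolic.'' So invoking either of them here is circular. Your fallback for \ref{item2} $\Rightarrow$ \ref{item1} (``equivalence of the homological and the usual Dehn function up to the $FP_2(\Q)$ hypothesis'') is exactly the delicate point: there is no usual Dehn function without finite presentability, so this equivalence is not available.

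The paper's proof avoids all of this by working directly and geometrically, with the implications in the order \ref{item1} $\Rightarrow$ \ref{item2} $\Rightarrow$ \ref{item3} $\Rightarrow$ \ref{item1}. For \ref{item1} $\Rightarrow$ \ref{item2} one uses $\delta$-hyperbolicity of the Cayley graph and inducts on $|b|_1$: the base case $|b|_1\le 16\delta$ holds because there are finitely many $G$-orbits of short circuits, and the inductive step splits a long circuit via \cite[III.H, Lemma~2.6]{bridsonhaefliger} into a strictly shorter circuit and one of length $\le 16\delta$. For \ref{item2} $\Rightarrow$ \ref{item3} one does not argue by density as you suggest, but cites Mineyev's Theorems~4 and~6 in \cite{mineyev} to write any $b\in B_1(X,\R)$ as $\sum \alpha_i p_i$ with $p_i\in B_1(X,\Z)$, $\alpha_i>0$, and $|b|_1=\sum\alpha_i|p_i|_1$; the same $K$ then works. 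For \ref{item3} $\Rightarrow$ \ref{item1} one argues by contradiction: if $G$ is not hyperbolic, Papasoglu \cite{papasoglu} and Gersten \cite[Lemma~5.2]{cohomologicalcharacterisation} (which need only finite generation) produce arbitrarily long thick quadrilaterals $w$ in $X^{(1)}$, and Mineyev's argument in \cite[Proposition~8]{mineyev} (which needs only $H_1(X;\R)=0$, not simple connectivity) gives $\|w\|_\R\gtrsim |w|_1^2$, contradicting \ref{item3}. None of these steps touches $\ell^\infty$-cohomology.
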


Note that since $X$ is not necessarily simply-connected even when $G$ is finitely presented, results such as Theorem 7 in \cite{mineyev} do not apply verbatim. Therefore we give a proof, although the arguments used are almost the same as in the proofs of the original results in the literature.

\begin{proof}
Note that by construction of $X$, any circuit in $\Gamma$ lies in $B_1(X,\Q)$.

\noindent $\ref{item1}\implies \ref{item2}$: Without loss of generality, $\Gamma$ is $\delta$-hyperbolic for some positive integer $\delta$. Since $b$ is an integral 1-cycle, it can be represented by a circuit $\gamma_b$ in $X^{(1)}$. We use induction on $|b|_1$. Since $X^{(1)}$ is locally finite, there are finitely many $G$-orbits of circuits of a given length, thus for the base case of $0\leqslant |b|_1 \leqslant 16\delta$ we may take:
$$
K:=\max_{|b|_1\leqslant16\delta}\mleft\{\|b\|_{\Q}\mright\}.
$$

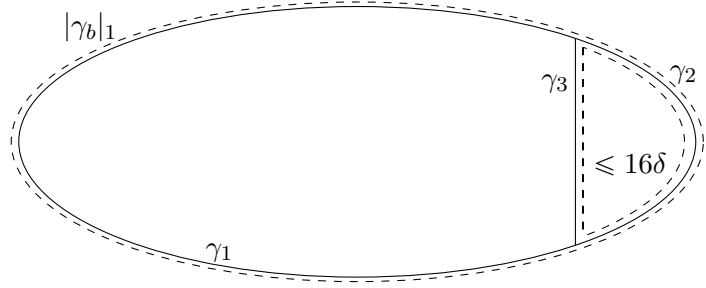
\begin{figure}[h]
\centering
\begin{tikzpicture}
\draw (0,0) circle [x radius=4.5cm, y radius=0.6*3cm];
\draw[dashed] (0,0) circle [x radius=4.6cm, y radius=0.6*3.1cm];
\draw (-3.05cm,0.6*2.55cm) node[left] {$|\gamma_b|_1$};
\draw (-2.15cm,-0.6*2.45cm) node[right] {$\gamma_1$};
\draw (4.02cm,0.6*1.5cm) node[right] {$\gamma_2$};
\draw (2.95cm,0.6*1.3cm) node[left] {$\gamma_3$};
\draw (2.9cm,-0.6*2.29cm) -- (2.9cm,0.6*2.29cm);
\begin{scope}
\clip (3cm,-0.6*2.1cm) rectangle (4.5cm,0.6*2.1cm);
\draw[dashed] (0,0) circle [x radius=4.35cm, y radius=0.6*2.9cm];
\draw[dashed,thick] (3cm,-0.6*3cm) -- node[below right] {$\leqslant16\delta$} (3cm,0.6*3cm);
\end{scope}
\end{tikzpicture}
\caption{Splitting up a filling of $\gamma_b$.}\label{fig:gammapic}
\end{figure}

For the inductive step, suppose we have $|b|_1>16\delta$. If $\gamma_b$ self-intersects, we may split it into two shorter circuits and the result follows by induction. Otherwise, we may invoke Lemma 2.6 in \cite{bridsonhaefliger} (III.H) to represent $\gamma_b=\gamma_1+\gamma_2$ as a sum in $C_1(X, \Z)$ of a shorter circuit $\gamma_a=\gamma_1+\gamma_3$ and a circuit $\gamma_c=\gamma_2-\gamma_3$ of length at most $16\delta$ (see Figure \ref{fig:gammapic}, where dotted lines denote length). By induction,
$$
\|\gamma_a\|_\Q\leqslant K |\gamma_a|_1.
$$
Moreover, $\|\gamma_c\|_\Q\leqslant K$. Finally,
$$
\|b\|_\Q\leqslant \|\gamma_a\|_\Q+\|\gamma_c\|_\Q\leqslant K(|b|_1-1)+K,
$$
whence by induction the result holds.

\noindent $\ref{item2}\implies \ref{item3}$: Suppose that $b\in B_1(X,\R)$. Applying Theorem 4 and the second part of Theorem 6 from \cite{mineyev}, there exist $p_1,p_2,\dots,p_n\in B_1(X,\Z)$ and positive $\alpha_i\in\R$ such that
$$
b = \sum_{i=1}^n \alpha_i p_i
$$
and
$$
|b|_1=\sum_{i=1}^n \alpha_i |p_i|_1,
$$
from which the required inequality follows (the same value for $K$ works).

\noindent $\ref{item3}\implies \ref{item1}$: There are finitely many orbits of 2-cells in $X$, therefore there exists a constant $M$ such that for all $a\in C_2(X,\R)$,
$$
|\partial_2(a)|_1\leqslant M|a|_1
$$
holds. Arguing by contradiction, suppose that $G$ is not hyperbolic, but $\ref{item3}$ holds. Theorem 1.4 of Papasoglu in \cite{papasoglu} and its corollary, Lemma 5.2 of Gersten in \cite{cohomologicalcharacterisation}, only rely on the group being finitely generated as they are results concerning the Cayley graph. We may therefore find arbitrarily long and thick quadrilaterals $w$ in $X^{(1)}$, just like in the proof of Proposition 8 of Mineyev in \cite{mineyev}. Following the argument there, which we can do since the first real homology of $X$ vanishes, we can get:
$$
\|w\|_{\R}\geqslant \frac{1}{800M^2}\left(|w|_1\right)^2.
$$
Choosing $|w|_1$ large enough, this contradicts $\ref{item3}$.
\end{proof}

\begin{definition}[Integral part] {\normalfont 
We say that a $\Z G$-submodule $I$ of a finitely generated $\Q G$-module $M$ is an \emph{integral part} of $M$ if $I$ is finitely generated as a $\Z G$-module as well as generating $M$ as a $\Q G$-module.}
\end{definition}

Note that for a group $G$ which is of type $FP_2(\Q)$ but not of type $FP_2(\Z)$, the relation module $Z_1(\Gamma,\Z)$ is not finitely generated as a $\Z G$-module, thus is not an integral part of $Z_1(\Gamma,\Q)$.

\begin{lemma}\label{lem:scalingallowed}
For $b\in B_1(X,\Q)$ and any $r\in\Q$, we have $\|rb\|_{\Q}=|r|\|b\|_{\Q}$.
\end{lemma}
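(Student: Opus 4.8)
The plan is to prove the identity $\|rb\|_{\Q} = |r|\,\|b\|_{\Q}$ for $b \in B_1(X,\Q)$ and $r \in \Q$ by a straightforward two-sided inequality, handling the trivial case $r = 0$ separately (where both sides vanish since $0$ is filled by $0 \in C_2(X,\Q)$). For $r \neq 0$, first I would establish $\|rb\|_{\Q} \leq |r|\,\|b\|_{\Q}$: given any $a \in C_2(X,\Q)$ with $\partial_2(a) = b$, the chain $ra \in C_2(X,\Q)$ satisfies $\partial_2(ra) = rb$, and $|ra|_1 = |r|\,|a|_1$ because the $\ell_1$-norm on $C_2(X,\Q)$ is genuinely homogeneous (scaling each coefficient by $r$ scales the sum of absolute values by $|r|$). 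Taking the infimum over all such $a$ gives the inequality.

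For the reverse inequality $|r|\,\|b\|_{\Q} \leq \|rb\|_{\Q}$, I would apply the inequality just proved to the element $rb$ in place of $b$ and the scalar $r^{-1} \in \Q$ in place of $r$, obtaining $\|r^{-1}(rb)\|_{\Q} \leq |r^{-1}|\,\|rb\|_{\Q}$, i.e. $\|b\|_{\Q} \leq |r|^{-1}\|rb\|_{\Q}$, which rearranges to the desired bound. Combining the two inequalities yields the equality.

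I do not expect any real obstacle here: the key point is simply that $C_2(X,\Q)$ is a $\Q$-vector space, that $\partial_2$ is $\Q$-linear, and that the $\ell_1$-norm $|\cdot|_1$ on chains is homogeneous under scaling by rationals — all of which are immediate from the definitions. The only mild subtlety worth noting is that this homogeneity genuinely requires working over $\Q$ (or a field): over $\Z$ one would only get $\|rb\|_{\Z} \leq |r|\,\|b\|_{\Z}$ in general, not equality, since $r^{-1}$ need not be an admissible scalar. This is precisely why the lemma is phrased for $\Q$-coefficients and ties in with the preceding remark that $Z_1(\Gamma,\Z)$ fails to be an integral part of $Z_1(\Gamma,\Q)$ for groups of type $FP_2(\Q)$ but not $FP_2(\Z)$.
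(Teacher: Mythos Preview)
Your proof is correct and follows essentially the same approach as the paper: the paper's proof is little more than a citation to Gersten together with the observation that $|ra|_1=|r||a|_1$ in $C_2(X,\Q)$, which is exactly the homogeneity fact your two-sided inequality argument unpacks. Your closing remark linking this to integral parts is a bit off the mark---the relevance of $\Q$ here is purely the invertibility of $r$, not the finite-generation issues surrounding integral parts---but this does not affect the argument itself.
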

\begin{proof}
This is an extension of the second statement of Lemma 3.1 in \cite{cohomologicalcharacterisation}, where $X$ was assumed to be simply-connected. Recall that the generating set  for 1-cycles of $\Gamma$ does not have superfluous generators. Over the rationals, $|ra|_1=|r||a|_1$ holds in $C_2(X,\Q)$.
\end{proof}

\begin{definition}[Rational homological filling function, \cite{rational}]{\normalfont 
Suppose that $G$ is a group of type $FP_{n+1}(\Q)$. Let
\begin{center}
\begin{tikzcd}
P_{n+1} \arrow[r,"\partial_{n+1}"] & P_n \arrow[r,"\partial_n"] & \dots \arrow[r,"\partial_1"] & P_0 \arrow[r] & \Q \arrow[r] & 0
\end{tikzcd}
\end{center}
be a partial projective resolution via finitely generated $\Q G$-modules. Note that by \cite[VIII.4.3]{brown_book}, $\ker{\partial_n}$ is finitely generated as a $\Q G$-module. Let $K_n$ be an integral part of $\ker{\partial_n}$. For $\gamma\in K_n$, define
$$
\|\gamma\|_{\partial_{n+1}}:=\inf{\mleft\{\|\mu\| : \mu\in P_{n+1}, \partial_{n+1}(\mu)=\gamma\mright\}}.
$$
Finally, define the {\it $n^{th}$-filling function}
$$
FV^{n+1}_{G,\Q}(k):=\sup{\mleft\{\|\gamma\|_{\partial_{n+1}} : \gamma\in K_n, \|\gamma\|\leqslant k\mright\}},
$$
where $\|\cdot\|$ denote the appropriate filling norms.}
\end{definition}

It is Theorem 3.3 in \cite{rational} that $FV^{n+1}_{G,\Q}$ is well-defined up to linear equivalence. In particular, being bounded above by a linear function is invariant under choice of a partial resolution $P_*$ by finitely generated projective $\Q G$-modules, integral part or filling norms.

The following question was asked by Arora and Mart\'{i}nez-Pedroza.

\begin{ques}[{\cite[Question 1.9]{rational}}]
Let $G$ be a group of type $FP_2(\Q)$ and suppose $FV_{G,\Q}^2$ is bounded above by a linear function. Is G a hyperbolic group?
\end{ques}

We give a positive answer to this question below. 

\begin{theorem}\label{thm:answer_AMP}
Suppose that $G$ is a group of type $FP_2(\Q)$ and that $FV_{G,\Q}^2$ is bounded above by a linear function. Then $G$ is hyperbolic.
\end{theorem}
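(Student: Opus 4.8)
The plan is to reduce the statement to the implication $\ref{item2}\Rightarrow\ref{item1}$ of Theorem \ref{thm:mineyev}, by exhibiting from the complex $X$ of this section an explicit model computing the rational homological filling function $FV^2_{G,\Q}$, and then reading off that the hypothesis becomes precisely condition \ref{item2} of that theorem.

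First I would extract a partial projective resolution from $X$. Since $G$ acts freely and cocompactly on $X$, the cellular chain complex yields
$$ C_2(X,\Q)\xrightarrow{\partial_2}C_1(X,\Q)\xrightarrow{\partial_1}C_0(X,\Q)\to\Q\to 0, $$
a partial resolution by finitely generated free $\Q G$-modules. Because $H_1(X;\Q)=0$ by construction, $\ker\partial_1 = Z_1(\Gamma,\Q) = B_1(X,\Q)$. For the integral part I would take $K_1:=B_1(X,\Z)$: since $C_2(X,\Z)$ is free over $\Z G$ on the set of $2$-cells, $B_1(X,\Z)=\partial_2 C_2(X,\Z)$ is the $\Z G$-submodule generated by the finitely many circuits $w_i$ representing the generators $\alpha_i$; as the $\alpha_i$ lie in $C_1(\Gamma,\Z)$ and generate $Z_1(\Gamma,\Q)$ over $\Q G$, this is indeed an integral part of $\ker\partial_1$. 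With these choices the filling norm $\|\gamma\|_{\partial_2}$ appearing in the definition of $FV^2$ is exactly the norm $\|\gamma\|_\Q$ of this section (the infimum of $|a|_1$ over rational $2$-chains $a$ with $\partial_2 a=\gamma$), and the norm $\|\gamma\|$ used to measure elements of $K_1$ is, up to linear equivalence, the restriction to $K_1$ of the $\ell^1$-norm $|\cdot|_1$ on the free module $C_1(X,\Q)$.

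Next I would invoke Theorem 3.3 of \cite{rational}, which guarantees that the function obtained from this data agrees up to linear equivalence with $FV^2_{G,\Q}$ computed from any admissible resolution, integral part and filling norms. Hence the hypothesis that $FV^2_{G,\Q}$ is bounded above by a linear function supplies constants $C,D\geq 0$ with $\|b\|_\Q\leq C|b|_1+D$ for all $b\in K_1=B_1(X,\Z)$. Since a nonzero integral $1$-chain satisfies $|b|_1\geq 1$, this gives $\|b\|_\Q\leq(C+D)|b|_1$ for every $b\in B_1(X,\Z)$, which is condition \ref{item2} of Theorem \ref{thm:mineyev}. That theorem then yields that $G$ is hyperbolic.

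The main obstacle will be the bookkeeping of the middle step: one must carefully match the abstract filling function of \cite{rational}, defined via a partial projective resolution with filling norms on finitely generated projective modules, against the concrete quantities $\|\cdot\|_\Q$ and $|\cdot|_1$ attached to $X$. In particular one needs that $B_1(X,\Z)$ is genuinely an integral part of $\ker\partial_1$, and that the norm used on it in the definition of $FV^2$ is linearly equivalent to $|\cdot|_1$ — here it helps that every module in the $X$-model is free, so all filling norms in sight are honest $\ell^1$-norms and Lemma \ref{norm_equiv} applies, while the invariance statement of \cite{rational} licenses the passage between models. Once this identification is in place, the implication is immediate from Theorem \ref{thm:mineyev}.
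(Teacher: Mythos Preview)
Your proposal is correct and follows essentially the same route as the paper: both compute $FV^2_{G,\Q}$ via the cellular chain complex of $X$, identify the relevant norms with $\|\cdot\|_\Q$ and $|\cdot|_1$, and then appeal to Theorem~\ref{thm:mineyev}. Your argument is in fact more streamlined: you observe directly that $B_1(X,\Z)$ is an integral part (it coincides with the paper's $I=\phi(M)$, since both are the $\Z G$-span of the $\alpha_i$), so the linear bound on $FV^2$ yields condition~\ref{item2} immediately; the paper instead argues by contradiction and passes through a rescaling $b_i\mapsto r_ib_i$ together with Lemma~\ref{lem:scalingallowed}, a detour which your direct approach makes unnecessary. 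Your handling of the additive constant via $|b|_1\geq 1$ for nonzero $b\in B_1(X,\Z)$ is clean and correct.
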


\begin{proof}
By the construction of $X$ above, we have that $H_1(X;\Q)=0$. Suppose that $G$ is not hyperbolic. Then by Theorem \ref{thm:mineyev}, for any $K\geqslant0$ there exists some $b_K\in B_1(X,\Z)$ such that
$$
\|b_K\|_{\Q}> K|b_K|_1,
$$
i.e.
$$
\inf{\mleft\{|a|_1 : a\in C_2(X,\Q),\partial_2(a)=b_K\mright\}}>K|b_K|_1.
$$
By considering $K=1,2,3,\dots$, we form a sequence $(b_i)_{i\in\mathbb{N}}$. Since each $b_i$ is in $B_1(X,\Z)$, which is contained in $Z_1(X,\Q)$, we can think of them as rational 1-cycles.

Let $z_1,z_2,\dots,z_m$ be the generators of $Z_1(X,\Q)$ from the construction of $X$; recall that these are integral 1-cycles. Consider the free $\Q G$-module $N$ with generating set $\{e_1,e_2,\dots,e_m\}$ and the epimorphism
$$
\phi:N\to Z_1(X,\Q),
$$
defined by $\phi(e_i)=z_i$ for $i=1,2,\dots,m$. Let $f_i\in N$ be such that $\phi(f_i)=b_i$.

Consider the free $\Z G$-module $M$ with generating set $\{e_1,e_2,\dots,e_m\}$. We have a natural inclusion $M\subset N$. For each $i$, there exists some positive integer $r_i$ such that $r_i f_i\in M$ and $\phi(r_i f_i)=r_i b_i$.

The image $I=\phi(M)$ inside $Z_1(X,\Q)$ is finitely generated as a $\Z G$-module by $\phi(e_i)=z_i$, but it also generates all of $Z_1(X,\Q)$ as a $\Q G$-module. Therefore $I$ is an integral part for $Z_1(X,\Q)$. Furthermore, it contains every $r_i b_i$.

We compute $FV_{G,\Q}^2(n)$ using the rational cellular structure on $X$ and take $I$ as the required integral part. By our assumptions, there exists a constant $C$ such that
$$
FV_{G,\Q}^2(n)\leqslant Cn\ \ \forall n\in\mathbb{N}.
$$
This means that for all positive integers $n$, we have
$$
\sup{\mleft\{\|b\|_{\Q}:b\in I,|b|_{C_1(X,\Q)}\leqslant n\mright\}}\leqslant Cn.
$$
This implies that for all $b\in I$ such that $|b|_{C_1(X,\Q)}\leqslant m$, we have
\begin{equation}\label{eq:fcbound}
\|b\|_{\Q}\leqslant Cm,
\end{equation}
where $|\cdot|_{C_1(X,\Q)}$ denotes the $\ell_1$-norm on $C_1(X,\Q)$. This is a filling norm because the module $C_1(X,\Q)$ is free as a $\Q G$-module.

\medskip

Now choose $i$ such that $i>C$. For this $i$, let $n=|b_i|_1$.
By construction, we have
$$
\|b_i\|_{\Q}>i|b_i|_1,
$$
which implies
\begin{equation}\label{eq:scaledeq}
|r_i|\|b_i\|_{\Q}>i|r_i||b_i|_1.
\end{equation}
On the other hand, $|b_i|_{C_1(X,\Q)}= n$. Consider $r_i b_i\in I$. We have
$$
|r_i b_i|_{C_1(X,\Q)}=|r_i||b_i|_{C_1(X,\Q)}= |r_i|n,
$$
so by setting $m=|r_i| n$ in (\ref{eq:fcbound}), we get
$$
\|r_i b_i\|_{\Q}\leqslant C |r_i| n.
$$
Applying Lemma \ref{lem:scalingallowed}, we get
$$
|r_i| \|b_i\|_{\Q}\leqslant C |r_i| n < i |r_i| |b_i|_1,
$$
which contradicts (\ref{eq:scaledeq}). Hence $G$ is hyperbolic.
\end{proof}

This proves Theorem \ref{intro_thm:answer_AMP}, since the converse follows from hyperbolic groups being finitely presented and Theorem 1.8 in \cite{rational}.

\medskip

We end this section by giving a generalisation of a theorem of Gersten \cite{cohomologicalcharacterisation} from finitely presented groups to groups of type $FP_2(\Q)$. It will be needed for applications to subgroups of hyperbolic groups in Section \ref{sec:applications}.

We use the shorthand $\ell_{\infty}:=\ell^{\infty}(\N,\R)$ to denote the injective Banach space of bounded real sequences, which can be thought of as a normed $\R G$-module for any group $G$ with trivial $G$-action.

\begin{theorem}\label{Gersten_l_hyp}
Suppose that $G$ is a group of type $FP_2(\Q)$ with $H^2_{(\infty)}(G; \ell_{\infty})=0$. Then $G$ is hyperbolic.
\end{theorem}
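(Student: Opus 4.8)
The plan is to deduce hyperbolicity from the criterion just established, Theorem~\ref{thm:mineyev}, by reading the hypothesis $H^2_{(\infty)}(G;\ell_\infty)=0$ as an extension property for functionals that are bounded on $G$-orbits. Form the cocompact free $2$-complex $X$ as above; by construction $H_1(X;\Q)=0$, the modules $C_i(X;\Q)$ are finitely generated and free over $\Q G$, and since $G$ is of type $FP_2(\Q)$ the module $Z_1:=Z_1(X;\Q)$ is finitely generated over $\Q G$ and coincides with $B_1(X;\Q)$. I would splice a free $\Q G$-resolution of $Z_1$ onto the exact sequence $0\to Z_1\to C_1(X;\Q)\to C_0(X;\Q)\to\Q\to 0$ to obtain a free $\Q G$-resolution $P_*\twoheadrightarrow\Q$ with $P_0=C_0(X;\Q)$ and $P_1=C_1(X;\Q)$. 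By Lemma~\ref{lem:equiv_infty} with $R=\Q$, the group $H^2_{(\infty)}(G;\ell_\infty)$ is the degree-$2$ cohomology of $\operatorname{Hom}^{bG}_{\Q}(P_*,\ell_\infty)$; since every structure map is $G$-equivariant and the relevant ones are surjective, a short diagram chase identifies this with the cokernel of the restriction map $\operatorname{Hom}^{bG}_{\Q}(C_1(X;\Q),\ell_\infty)\to\operatorname{Hom}^{bG}_{\Q}(Z_1,\ell_\infty)$. Thus the hypothesis is equivalent to the statement that every $\Q$-linear map $Z_1\to\ell_\infty$ which is bounded on $G$-orbits extends to such a map on $C_1(X;\Q)$.

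Next I would feed a single, carefully chosen functional into this. Equip $Z_1=B_1(X;\Q)$ with the filling norm $\|\cdot\|_\Q$. Because $X$ has only finitely many $G$-orbits of $2$-cells there is a constant $M$ with $|\partial_2 a|_1\le M|a|_1$ for all $a\in C_2(X;\Q)$, hence $|b|_1\le M\|b\|_\Q$ for all $b\in Z_1$; in particular $\|\cdot\|_\Q$ is a genuine, finite-valued, $G$-invariant norm on $Z_1$. Since $G$ is finitely generated, $Z_1$ is countable, so $(Z_1,\|\cdot\|_\Q)$ is a separable normed $\Q$-space and therefore admits a $\Q$-linear isometric embedding $\phi\colon Z_1\to\ell_\infty=\ell^\infty(\N,\R)$ (apply Hahn--Banach to a countable dense subset of its real completion). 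By $G$-invariance of $\|\cdot\|_\Q$ the map $\phi$ is bounded on $G$-orbits, so by the reformulated hypothesis it extends to some $\psi\in\operatorname{Hom}^{bG}_{\Q}(C_1(X;\Q),\ell_\infty)$. As $C_1(X;\Q)$ is finitely generated and free over $\Q G$, the boundedness of $\psi$ on the finitely many $G$-orbits of a free basis gives, after expanding an arbitrary chain in the associated $\Q$-basis and using the triangle inequality, a constant $L$ with $\|\psi(c)\|_\infty\le L\,|c|_1$ for all $c\in C_1(X;\Q)$. Hence for every $b\in B_1(X;\Z)\subseteq Z_1$,
\[
\|b\|_\Q=\|\phi(b)\|_\infty=\|\psi(b)\|_\infty\le L\,|b|_1,
\]
which is exactly condition~\ref{item2} of Theorem~\ref{thm:mineyev}. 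Therefore $G$ is hyperbolic.

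I expect the main obstacle to be the first step. Since $X$ is only two-dimensional and need not be simply connected, it is not itself a resolution, so one must take care that passing to $\operatorname{Hom}^{bG}_{\Q}$ and restricting to $2$-cocycles genuinely recovers the functionals on $Z_1$ bounded on $G$-orbits --- that is, that the property ``bounded on $G$-orbits'' is preserved as one moves between $C_1(X;\Q)$, its submodule $Z_1$, and the spliced resolution of $Z_1$. This is a consequence of the $G$-equivariance and surjectivity of the maps involved, combined with the elementary observation --- already used in the proof of Theorem~\ref{l_inf_gen} --- that a $\Q$-linear map out of a finitely generated $\Q G$-module is bounded on $G$-orbits as soon as it is bounded on the orbits of a finite generating set. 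Everything else is soft: the isometric embedding into $\ell_\infty$ is Hahn--Banach on the separable completion of $(Z_1,\|\cdot\|_\Q)$, and the passage from orbital boundedness to the uniform $\ell^1$-estimate on the free module $C_1(X;\Q)$ is the same bookkeeping with $\ell^1$-norms that appears in the proof of Theorem~\ref{l_inf_gen}.
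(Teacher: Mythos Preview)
Your argument is correct, and it follows a genuinely different and in some ways more direct path than the paper's.

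The paper extends $C_*(X;\R)$ to a free $\R G$-resolution, proves a \emph{strong vanishing} statement first (there is a uniform $M$ such that every $\R$-valued $2$-cocycle bounded on $G$-orbits has a bounded primitive of norm at most $M$ times its own), boosts this to vanishing with coefficients $\ell^\infty(S,\R)$ for arbitrary sets $S$, and then takes $S$ to be the full set of norm-one bounded linear functionals on $B_1(X;\R)$ to recover the equivalence of $\|\cdot\|_\R$ and $|\cdot|_1$; Theorem~\ref{thm:mineyev}\ref{item3} finishes. Your approach instead exploits the countability of $Z_1(X;\Q)$ to pack the norm $\|\cdot\|_\Q$ into a \emph{single} $\ell_\infty$-valued Kuratowski-type embedding, extends that one functional via the hypothesis, and reads off the linear bound directly from the finitely generated free structure of $C_1(X;\Q)$, landing on Theorem~\ref{thm:mineyev}\ref{item2}. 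You thereby bypass both the strong-vanishing step and the passage to uncountable index sets~$S$. What the paper's route buys is the intermediate uniform-primitive statement, which is of independent interest and does not need separability; what yours buys is brevity and conceptual clarity, since the entire argument is visibly ``embed isometrically, extend, estimate''.

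Your own stated worry about the identification of $H^2_{(\infty)}$ with the cokernel of $\operatorname{Hom}^{bG}_{\Q}(C_1,\ell_\infty)\to\operatorname{Hom}^{bG}_{\Q}(Z_1,\ell_\infty)$ is handled exactly as you say: the map $P_2\twoheadrightarrow Z_1\hookrightarrow C_1$ is $G$-equivariant, $Z_1$ is finitely generated over $\Q G$, and bounded-on-orbits passes back and forth by the same $\ell^1$-bookkeeping used in the proof of Theorem~\ref{l_inf_gen}. The check that $\|\cdot\|_\Q$ is a genuine (not merely pseudo-) norm via $|b|_1\le M\|b\|_\Q$ is also correct and necessary for the isometric embedding to make sense.
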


\begin{proof}
Consider the two-dimensional $X$ above. We can use its cellular complex, extended abstractly to an $\R G$-resolution $F_*\twoheadrightarrow \R$ (possibly infinite) where  $F_i=C_i(X, \R)$ for $i\leq 2$ and Lemma \ref{lem:equiv_infty} to compute the $\ell_{\infty}$-cohomology in lower degrees of $G$. Thus, our assumption implies that the second cohomology of the co-chain complex
$$
{Hom}^{bG}_\R(F_i, \ell_{\infty})
$$
vanishes.

For $i\in\{1,2\}$, since $X$ has finitely many $G$-orbits of both 1-cells and 2-cells, we may define the following norm on $i$-cochains bounded on $G$-orbits ${Hom}^{bG}_\R(C_i(X,\R), \R)$:
$$
\|f\|_{\infty}:=\sup\left\{|f(x)|\ :\ x \text{ is an $i$-cell of $X$}\right\}.
$$
We claim that there exists $M>0$ with the property that for all $z\in {Hom}^{bG}_\R(C_2(X,\R), \R)$ with $\delta(z)=0$, there exists some $c\in{Hom}^{bG}_\R(C_1(X,\R), \R)$ with $\delta(c)=z$ and $\|c\|_{\infty}\leqslant M\|z\|_{\infty}$.

If not, then there exist $z_i\in {Hom}^{bG}_\R(C_2(X,\R), \R)$ for $i\in\N$ such that $\delta(z_i)=0$ and $\|z_i\|_{\infty}=1$ for all $i$, with the property that
$$
\sup_{i\in\N}\ \inf\left\{\|c_i\|_{\infty}\ :\ c_i\in{Hom}^{bG}_\R(C_1(X,\R), \R),\ \delta(c_i)=z_i\right\}=\infty.
$$
We can now combine the $z_i$ to form a 2-cocycle $\bar{z}\in {Hom}^{bG}_\R(C_2(X,\R), \ell_{\infty})$ via $\bar{z}(x)(i)=z_i(x)$ which is bounded on $G$-orbits, but which is not the image of any 1-cochain bounded on $G$-orbits. This contradicts the assumption that $H^2_{(\infty)}(G; \ell_{\infty})=0$, which proves the claim. Note that this is similar to the proof of Proposition 4.6 in \cite{cohomologicalcharacterisation}, where this property was referred to as \textit{strong vanishing}.

By the same argument as Proposition 4.7 in \cite{cohomologicalcharacterisation}, it follows that the second cohomology of the co-chain complex
$$
{Hom}^{bG}_\R(F_i, \ell^{\infty}(S,\R))
$$
vanishes for all discrete sets $S$.
We now follow the argument in the proof of Theorem 4.3 in \cite{cohomologicalcharacterisation}. By setting $S$ to be the set of all bounded linear maps
$$
g:B_1(X,\R) \to \R
$$
of norm 1, we can deduce that the norms $\|\cdot \|_{\R}$ (denoted $N_{\R}$ in \cite{cohomologicalcharacterisation}) and $|\cdot|_1$ are equivalent on $B_1(X,\R)$, which means there exists a constant $K$ such that $\|\cdot \|_{\R}\leqslant K |\cdot|_1$.

Finally, by Theorem \ref{thm:mineyev} we deduce that $G$ is hyperbolic.
\end{proof}

\section{Applications} \label{sec:applications}

In this section, we give some applications of Section \ref{sec:linf} and Theorem \ref{l_inf_gen}.

\subsection{Subgroups of hyperbolic groups} Subgroups of hyperbolic groups inherit some of the properties of hyperbolic groups, but may not inherit hyperbolicity even when these subgroups  have strong finiteness conditions \cite{Rips, Brady, typefsubgroup}. We consider subgroups of type $FP(\Q)$.

\begin{cor}\label{cor_i_inft}
Let $G$ be a hyperbolic group with $\cd_{\Q} \, G = n$, $n\geq 2$. Suppose $H$ is a subgroup of type $FP(\Q)$. Then  $H^n_{(\infty)}(H; V)=0$  for all normed $\R$-modules $V$.
\end{cor}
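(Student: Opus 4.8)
The plan is to derive Corollary \ref{cor_i_inft} directly from Theorem \ref{l_inf_gen}, once we observe that the ambient hyperbolic group $G$ has vanishing $\ell^{\infty}$-cohomology above degree one. First I would apply Theorem \ref{l_inf_gen} with $R=\Q$: its hypotheses are exactly that $H$ be of type $FP(\Q)$ and that $\cd_{\Q}G=n$, both of which are given (and the remark following Theorem \ref{l_inf_gen} notes that, since $\cd_{\Q}H\le\cd_{\Q}G=n$, being of type $FP(\Q)$ is here the same as being of type $FP_n(\Q)$). This yields that the restriction homomorphism $H^n_{(\infty)}(G;V)\to H^n_{(\infty)}(H;V)$ is surjective for every normed $\Q$-module $V$. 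A normed $\R$-module is in particular a normed $\Q$-module (restriction of scalars along $\Q\hookrightarrow\R$ preserves all three norm axioms), and since the definition $H^*_{(\infty)}(-;V)=H^*(-;\ell^{\infty}(-,V))$ makes no reference to the scalar ring on $V$, the same surjectivity holds for the given normed $\R$-module $V$. It therefore suffices to prove $H^n_{(\infty)}(G;V)=0$.

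For that I would use that every hyperbolic group is finitely presented, and that a finitely presented hyperbolic group $G$ satisfies $\cdl G\le 1$; this is the combination of Theorem \ref{intro_Gersten_cohom_char} with \cite[Theorem 20]{mineyev00}, as recalled in the introduction. Since $n=\cd_{\Q}G\ge 2$, the very definition of $\cdl$ gives $H^n_{(\infty)}(G;V)=0$ for every normed $\R$-module $V$. Combining this with the surjectivity from the previous step gives $H^n_{(\infty)}(H;V)=0$, which is the assertion.

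I do not anticipate a real obstacle; the content of the corollary is entirely packaged inside Theorem \ref{l_inf_gen}, and what remains is bookkeeping. The two places that need care are: (i) matching the coefficients, since Theorem \ref{l_inf_gen} is stated for normed $R$-modules with $\Z\le R\le\Q$ while the corollary is stated for normed $\R$-modules, so one must pass explicitly through the restriction-of-scalars observation above; and (ii) the fact that $\cdl G\le 1$ has to be applied in the possibly large degree $n$ rather than merely in degree $2$, which is precisely why Mineyev's higher-dimensional vanishing is invoked alongside Gersten's characterisation rather than Theorem \ref{intro_Gersten_cohom_char} on its own.
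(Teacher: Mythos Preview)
Your proposal is correct and follows essentially the same approach as the paper's proof: the paper simply cites \cite{mineyev00} for the vanishing $H^n_{(\infty)}(G;V)=0$ when $G$ is hyperbolic and $n\ge 2$, and then applies Theorem~\ref{l_inf_gen} to obtain the surjectivity of the restriction map. Your additional care with the restriction-of-scalars point (treating a normed $\R$-module as a normed $\Q$-module so that Theorem~\ref{l_inf_gen} applies with $R=\Q$) is a detail the paper leaves implicit, but it is not a genuinely different route.
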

\begin{proof} Since $G$ is hyperbolic, $H^n_{(\infty)}(G; V)=0$ \cite{mineyev00}.  By Theorem \ref{l_inf_gen}, we have $H^n_{(\infty)}(H; V)=0$.
\end{proof}

When the hyperbolic group has  rational cohomological dimension $2$, we can say more.

\begin{cor}\label{Gersten_cd} Let $G$ be a hyperbolic group such that $\cd_{\Q} \, G = 2$. Suppose $H$ is a subgroup of $G$ of type $FP_2(\Q)$. Then $H$ is hyperbolic.
\end{cor}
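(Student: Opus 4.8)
The plan is to combine Corollary \ref{cor_i_inft} with Theorem \ref{Gersten_l_hyp}, using $n=2$ as the relevant dimension. First I would observe that since $H$ is of type $FP_2(\Q)$ and $\cd_\Q H \le \cd_\Q G = 2$, the group $H$ is in fact of type $FP(\Q)$ (finiteness of type beyond the cohomological dimension is automatic, as noted in the remark after Theorem \ref{l_inf_gen}). This lets me apply Corollary \ref{cor_i_inft} with the subgroup $H$ and any normed $\R$-module $V$: we get $H^2_{(\infty)}(H; V) = 0$. In particular, taking $V = \ell_\infty = \ell^\infty(\N,\R)$, we conclude $H^2_{(\infty)}(H; \ell_\infty) = 0$.

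Next I would invoke Theorem \ref{Gersten_l_hyp}, which states that a group of type $FP_2(\Q)$ with vanishing $H^2_{(\infty)}(\,\cdot\,; \ell_\infty)$ is hyperbolic. Since $H$ is of type $FP_2(\Q)$ by hypothesis and we have just established the cohomological vanishing, Theorem \ref{Gersten_l_hyp} applies directly and yields that $H$ is hyperbolic. That completes the argument.

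There is essentially no obstacle here: this corollary is a formal consequence of the two main technical results already proved in the excerpt — the surjectivity of the restriction map in top-degree $\ell^\infty$-cohomology (Theorem \ref{l_inf_gen}, via Corollary \ref{cor_i_inft}) and the $FP_2(\Q)$-generalisation of Gersten's cohomological characterisation of hyperbolicity (Theorem \ref{Gersten_l_hyp}). The only small point worth spelling out is the passage from $FP_2(\Q)$ to $FP(\Q)$ for $H$, which is needed to feed $H$ into Corollary \ref{cor_i_inft}; this is immediate once one notes $\cd_\Q H \le 2$. One could alternatively phrase the whole thing without mentioning $FP(\Q)$ by checking that the proof of Corollary \ref{cor_i_inft} only really uses $FP_n(\Q)$, but invoking the remark is cleaner.

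\begin{proof}
Since $H$ is a subgroup of $G$, we have $\cd_{\Q} H\leq \cd_{\Q} G=2$, so $H$ being of type $FP_2(\Q)$ is equivalent to being of type $FP(\Q)$. By Corollary \ref{cor_i_inft} applied with $n=2$ and $V=\ell_{\infty}$, we obtain $H^2_{(\infty)}(H;\ell_{\infty})=0$. Since $H$ is of type $FP_2(\Q)$, Theorem \ref{Gersten_l_hyp} now implies that $H$ is hyperbolic.
\end{proof}
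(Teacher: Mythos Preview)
Your proof is correct and follows essentially the same approach as the paper: both arguments obtain $H^2_{(\infty)}(H;\ell_\infty)=0$ from the hyperbolicity of $G$ together with the surjectivity of the restriction map (Theorem~\ref{l_inf_gen}), and then conclude via Theorem~\ref{Gersten_l_hyp}. The only cosmetic difference is that you route through Corollary~\ref{cor_i_inft} (hence spelling out the $FP_2(\Q)\Leftrightarrow FP(\Q)$ passage), whereas the paper invokes Theorem~\ref{l_inf_gen} directly.
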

\begin{proof} By Theorems \ref{l_inf_gen}  and  \ref{intro_Gersten_cohom_char}, $H^2_{(\infty)}(H; \ell_{\infty})=0$. By Theorem \ref{Gersten_l_hyp}, $H$ is hyperbolic.
\end{proof}

 Rips constructed subgroups of hyperbolic small cancellation groups that are finitely generated but not finitely presented \cite[Theorem 1]{Rips}. Corollary \ref{Gersten_cd} shows that those subgroups are not of type $FP_2(\Q)$.

\begin{theorem}[Rips]\label{rips_const} Given any finitely presented group $Q$ and any real number {$\lambda>0$}, there is a {$C'(\lambda)$} group $G$ and a short exact sequence
$$1\to N\to G\xrightarrow{q} Q\to 1$$
\noindent such that $N$ is finitely generated.
\end{theorem}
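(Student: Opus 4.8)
The plan is to write down an explicit presentation for $G$ and to split the argument into a structural part (valid for \emph{any} choice of certain auxiliary words) and a small-cancellation part (where the words must be chosen carefully). Fix a finite presentation $Q=\langle x_1,\dots,x_m \mid r_1,\dots,r_n\rangle$. I would introduce two new generators $a_1,a_2$ and define $G$ by the presentation with generating set $\{x_1,\dots,x_m,a_1,a_2\}$ and two families of relators: for each $j\in\{1,\dots,n\}$ a relator $r_j\,U_j^{-1}$, and for each $i\in\{1,\dots,m\}$, $k\in\{1,2\}$ and $\varepsilon\in\{1,-1\}$ a relator $x_i^{\varepsilon}a_k x_i^{-\varepsilon}\,V_{i,k,\varepsilon}^{-1}$, where the $U_j$ and $V_{i,k,\varepsilon}$ are words in $a_1^{\pm1},a_2^{\pm1}$ yet to be specified. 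Set $N:=\langle\langle a_1,a_2\rangle\rangle$, the normal closure of $\{a_1,a_2\}$ in $G$.

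Next I would verify the two structural properties, which hold regardless of how the auxiliary words are chosen. The relators of the second family say that $x_i^{\pm1}a_k x_i^{\mp1}$ is equal in $G$ to a word in $a_1,a_2$; hence the subgroup $\langle a_1,a_2\rangle\leq G$ is normalised by each $x_i^{\pm1}$ (taking $\varepsilon=1$ and $\varepsilon=-1$) and obviously by $a_1,a_2$, so it is normal, and therefore equals $N$. In particular $N$ is generated by $a_1$ and $a_2$, hence is finitely generated. For the quotient, adjoining the relations $a_1=a_2=1$ to the presentation of $G$ turns each relator $r_j U_j^{-1}$ into $r_j$ and each relator $x_i^{\varepsilon}a_k x_i^{-\varepsilon}V_{i,k,\varepsilon}^{-1}$ into the trivial word, so $G/N$ has presentation $\langle x_1,\dots,x_m\mid r_1,\dots,r_n\rangle$, i.e. $G/N\cong Q$. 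This produces the short exact sequence $1\to N\to G\xrightarrow{q} Q\to 1$ with $N$ finitely generated.

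The real content, and the step I expect to be the main obstacle, is arranging that the presentation of $G$ satisfies $C'(\lambda)$. I would take each $U_j$ and each $V_{i,k,\varepsilon}$ to be a long word assembled from blocks $a_1 a_2^{t}$ with pairwise distinct, rapidly growing exponents $t$, drawing the exponents for different auxiliary words from disjoint ranges, and requiring every such word to have length at least $L$, a threshold depending only on $\lambda$ and on $\max_j|r_j|$. One then checks by a finite case analysis over the finitely many (cyclic) pairs of relators that every piece has length bounded independently of $L$: the $x$-portions of the relators are fixed and short, while the block structure with all exponents distinct prevents two distinct relators — or a relator and a nontrivial cyclic rotation of itself — from sharing a long common subword. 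Since every relator has length at least about $L$, choosing $L$ large enough forces the ratio of the length of a piece to the length of a relator containing it below $\lambda$, so $G$ is $C'(\lambda)$. Combined with the previous paragraph, this completes the proof; taking $\lambda=1/6$ yields the $C'(1/6)$ case used above.
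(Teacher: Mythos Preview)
The paper does not give its own proof of this statement: it is quoted as a theorem of Rips with a citation to \cite{Rips}, and the paper moves on immediately to the proof of Corollary~\ref{cor:ripscor}. So there is nothing in the paper to compare your argument against.

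That said, your proposal is a faithful reconstruction of Rips' original argument. The structural half is correct as written: the conjugation relators for both $\varepsilon=1$ and $\varepsilon=-1$ force $\langle a_1,a_2\rangle$ to be normal (you are right to include both signs --- having only $x_i a_k x_i^{-1}\in\langle a_1,a_2\rangle$ would not by itself give the reverse inclusion), and killing $a_1,a_2$ recovers the given presentation of $Q$. The small-cancellation half is also the standard mechanism: using blocks $a_1 a_2^{t}$ with all exponents distinct and drawn from disjoint ranges bounds the length of any piece by a constant depending only on the fixed $x$-parts, while the relator lengths can be made arbitrarily large, so $C'(\lambda)$ holds for $L$ large enough. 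The one place where a referee might ask for a line more of detail is the self-overlap case (a relator against a nontrivial cyclic rotation of itself or its inverse): you should note explicitly that the strictly increasing exponents in the $a_2$-blocks prevent any long periodic subword, which is what rules this out.
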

\begin{proof}[Proof of Corollary \ref{cor:ripscor}] Suppose $Q$ is a finitely presented group that contains a finitely generated subgroup $P$ which is not finitely presented. Consider the preimage $H=q^{-1}(P)$. Then $H$ is finitely generated but it is not finitely presented, for otherwise $P$ would be finitely presented. Set $\lambda=1/6$. Then $G$ is hyperbolic and moreover $\cd_{\Q} \, G \leq 2$ \cite{Olshanski}. Since $H$ is not finitely presented,  Corollary \ref{Gersten_cd} shows that $H$ is not of type $FP_2(\Q)$.\end{proof}

\subsection{Non-vanishing in top dimension}

The following result was proven by Gersten in \cite[Prop.~7.2]{gersten_note} for groups of type $F_n$. The proof generalises verbatim to any group. 

\begin{lemma}[Gersten] \label{int_real} Let $G$ be a group. Then $H^i_{(\infty)}(G; \Z)=H^i_{(\infty)}(G; \R)$ for all $i\geq 2$.
\end{lemma}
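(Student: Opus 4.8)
The plan is to relate $\ell^{\infty}$-cohomology with $\Z$-coefficients and with $\R$-coefficients via the short exact sequence of normed $RG$-modules coming from the inclusion $\Z \hookrightarrow \R$, and then show that the interpolating term $\R/\Z$ contributes nothing to $\ell^{\infty}$-cohomology. Concretely, fix a projective $\Z G$-resolution $P_* \twoheadrightarrow \Z$ and compute everything using the orbitally-bounded cochain complexes $Hom^{bG}_{\Z}(P_i, -)$ afforded by Lemma \ref{lem:equiv_infty}. The key point is that the functor $\ell^{\infty}(G, -)$ is \emph{exact} on short exact sequences of normed modules with \emph{bounded} set-theoretic splittings: the sequence $0 \to \Z \to \R \to \R/\Z \to 0$ splits by a set map (a choice of representatives in $[0,1)$) which is bounded, so $0 \to \ell^{\infty}(G,\Z) \to \ell^{\infty}(G,\R) \to \ell^{\infty}(G,\R/\Z) \to 0$ is exact (here $\R/\Z$ is given the quotient norm, which takes values in $[0,1/2]$). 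Applying $Hom_{\Z G}(P_*, -)$ and passing to cohomology yields a long exact sequence
$$
\cdots \to H^{i-1}_{(\infty)}(G; \R/\Z) \to H^i_{(\infty)}(G; \Z) \to H^i_{(\infty)}(G; \R) \to H^i_{(\infty)}(G; \R/\Z) \to \cdots.
$$

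It then suffices to show $H^i_{(\infty)}(G; \R/\Z) = 0$ for \emph{all} $i \geq 1$ (so that both the incoming and outgoing maps vanish for $i \geq 2$). This is the heart of the argument, and it is exactly what Gersten proves in \cite[Prop.~7.2]{gersten_note}: the point is that $\R/\Z$ is a \emph{bounded} normed group, so a cochain with values in $\ell^{\infty}(G, \R/\Z)$ is automatically orbitally bounded, and one can contract the cochain complex. More precisely, $\ell^{\infty}(G, \R/\Z) \cong \ell^{\infty}(G, \R)/\ell^{\infty}(G,\Z)$ and, working with the bar resolution or any free resolution, every cocycle can be lifted and killed because the relevant coefficient module is a quotient by a "lattice"; alternatively one observes that $H^*(G; \ell^{\infty}(G,\R/\Z))$ is the cohomology of $G$ with coefficients in a coinduced-type module which is relatively injective, forcing vanishing in positive degrees. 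Since the claim is that Gersten's proof "generalises verbatim", I would simply reproduce his contraction of the cochain complex $Hom^{bG}_{\Z}(P_*, \R/\Z)$, noting at each step that boundedness of $\R/\Z$ removes the only obstruction that distinguished his $F_n$ setting from the general case.

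Finally, assembling the long exact sequence with $H^{i-1}_{(\infty)}(G;\R/\Z) = H^i_{(\infty)}(G;\R/\Z) = 0$ for $i \geq 2$ gives the isomorphism $H^i_{(\infty)}(G;\Z) \xrightarrow{\ \cong\ } H^i_{(\infty)}(G;\R)$ for all $i \geq 2$, as claimed.

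I expect the main obstacle to be verifying carefully that $\ell^{\infty}(G, -)$ preserves exactness of $0 \to \Z \to \R \to \R/\Z \to 0$ at the level of \emph{normed} modules — i.e.\ that surjectivity survives after applying $\ell^{\infty}(G,-)$ — which requires the boundedness of the section $\R/\Z \to \R$, and then the vanishing $H^i_{(\infty)}(G;\R/\Z)=0$, which is where all the work resides and where one must check that no finiteness hypothesis on $G$ sneaks into Gersten's original argument.
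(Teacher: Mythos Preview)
Your outline is correct and is essentially Gersten's argument, which the paper does not reproduce: it simply cites \cite[Prop.~7.2]{gersten_note} and asserts that the proof there ``generalises verbatim to any group'' once one uses Wienhard's definition $H^*_{(\infty)}(G;V)=H^*(G;\ell^{\infty}(G,V))$.

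One small sharpening: of the two justifications you offer for $H^i_{(\infty)}(G;\R/\Z)=0$ ($i\geq 1$), the second is the clean one and deserves to be stated plainly rather than as an ``alternatively''. Because $\R/\Z$ is bounded, \emph{every} map $G\to\R/\Z$ is bounded, so $\ell^{\infty}(G,\R/\Z)=\mathrm{Map}(G,\R/\Z)=\mathrm{Coind}_{1}^{G}(\R/\Z)$, and Shapiro's lemma gives $H^i(G;\mathrm{Coind}_1^G(\R/\Z))\cong H^i(1;\R/\Z)=0$ for $i\geq 1$. This step visibly uses nothing about $G$, which is exactly why the $F_n$ hypothesis in Gersten's original formulation drops out. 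Your first alternative (``lift and kill because the coefficient module is a quotient by a lattice'') is vague and not really a proof as written; I would drop it.
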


\begin{lemma}\label{lem:PD}  Let $\G$ be a Poincar\'{e} duality  group of dimension $d\geq 1$. Then $\G$ is amenable if and only if $H^d_{(\infty)}(\Gamma, \R)\ne 0$ if and only if $H^d_{(\infty)}(\Gamma, \R)$ has uncountably infinite dimension over $\R$.

\end{lemma}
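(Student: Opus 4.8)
\textbf{Proof plan for Lemma \ref{lem:PD}.}

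The plan is to go through a chain of equivalences, using Poincar\'e duality to transfer the question from $\ell^{\infty}$-cohomology in the top degree to $\ell^{\infty}$-homology-type invariants in degree zero, and then to invoke the Block--Weinberger characterisation of amenability via uniformly finite homology. First I would recall that a Poincar\'e duality group $\G$ of dimension $d$ is of type $FP$ over $\Z$ (indeed type $F$ up to finite index, but $FP$ suffices), so $\ell^{\infty}$-cohomology can be computed, via Lemma \ref{lem:equiv_infty}, from orbitally-bounded cochains on a finite-type free resolution. By Lemma \ref{int_real} it is enough to treat real coefficients for the non-vanishing statements, and the dualising module of $\G$ is $\Z$ (so $\G$ is orientable) or $\Z$ with a sign action; since $\ell^{\infty}$-cohomology is insensitive to the action on the coefficients (the remark after Lemma \ref{lem:equiv_infty}), the orientation character is harmless. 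The key point is that $\ell^{\infty}$-cohomology of a countable group is dual, in the sense made precise in the discussion preceding Corollary \ref{intro_quasi_rel}, to the uniformly finite homology $H^{uf}_*(\G;\R)$ of Block and Weinberger: concretely $H^d_{(\infty)}(\G;\R)\cong \big(H^{uf}_d(\G;\R)\big)^{*}$ or, more usefully here, $H^d_{(\infty)}(\G;\R)\neq 0$ if and only if $H^{uf}_d(\G;\R)\neq 0$.

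Next I would apply Poincar\'e duality at the level of these uniformly-finite / $\ell^{\infty}$ complexes. On a cocompact model for $E\G$ of dimension $d$, cap product with a (locally finite) fundamental class induces an isomorphism between the $\ell^{\infty}$-cochain complex in degree $d-i$ and the uniformly finite chain complex in degree $i$, compatibly with the $\G$-action, hence an isomorphism $H^d_{(\infty)}(\G;\R)\cong H^{uf}_0(\G;\R)$. This is the usual Poincar\'e duality argument, just carried out with $\ell^{\infty}$ coefficients on a cocompact classifying space; the only thing to check is that the duality chain maps are bounded for the relevant norms, which holds because everything is finitely generated over $\Z\G$ and the cap product is given by a fixed finite collection of $\Z\G$-module maps, so Lemma \ref{fg_fill_norms} applies. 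Then I would invoke the Block--Weinberger theorem \cite{BlockWein92}: for a countable group (or more precisely for its Cayley graph, a uniformly discrete space of bounded geometry), $H^{uf}_0(\G;\R)=0$ if and only if $\G$ is non-amenable; and when $\G$ is amenable, $H^{uf}_0(\G;\R)$ is not just nonzero but infinite-dimensional — in fact it contains a continuum of linearly independent classes (distinct "means at infinity", obtained from an uncountable family of pairwise very different F\o lner-type exhaustions). Dualising back through the Poincar\'e duality isomorphism, $H^d_{(\infty)}(\G;\R)=0$ iff $\G$ is non-amenable, and otherwise it has uncountable dimension over $\R$, which is exactly the three-way equivalence claimed.

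The main obstacle, and the step to write carefully, is the precise formulation and proof of the duality $H^d_{(\infty)}(\G;\R)\cong H^{uf}_0(\G;\R)$: one must be sure that cap product with the fundamental class respects the orbitally-bounded condition on cochains and the uniformly-finite condition on chains, and that the resulting maps are genuine chain-homotopy inverses and not merely chain maps. I expect this to follow from standard equivariant Poincar\'e duality over $\Z\G$ combined with the boundedness of finitely-generated module maps (Lemma \ref{fg_fill_norms}), but it requires stating explicitly which model of $E\G$ and which normed structures are used. A secondary, milder point is the passage from "non-vanishing" to "uncountable dimension": for this I would cite the refinement of the Block--Weinberger result that for an infinite amenable group the space of additive invariant functionals at infinity (equivalently, $H^{uf}_0$) is infinite-dimensional and in fact of cardinality of the continuum in the linearly-independent sense, so that the conclusion matches the phrasing used elsewhere in the paper (e.g. "a continuum of linearly independent elements").
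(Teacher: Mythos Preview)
Your outline is correct and lands on the same Block--Weinberger endpoint as the paper, but you are making the duality step harder than it needs to be. The paper exploits the Wienhard definition $H^d_{(\infty)}(\Gamma;\R)=H^d(\Gamma;\ell^{\infty}(\Gamma,\R))$ directly: after passing to an orientable subgroup of index at most $2$ (harmless by quasi-isometry invariance of $\ell^{\infty}$-cohomology, rather than by your appeal to the remark after Lemma~\ref{lem:equiv_infty}, which concerns the action on $V$ and is not quite the right justification for the orientation twist), \emph{ordinary} algebraic Poincar\'e duality for groups gives
\[
H^d(\Gamma;\ell^{\infty}(\Gamma,\R))\cong H_0(\Gamma;\ell^{\infty}(\Gamma,\R))=\ell^{\infty}(\Gamma,\R)_{\Gamma}
\]
in one line, with no need to build a bounded cap product or to check that the duality chain maps preserve the orbitally-bounded/uniformly-finite conditions. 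The identification of the coinvariants $\ell^{\infty}(\Gamma,\R)_{\Gamma}$ with $H_0^{uf}(\Gamma;\R)$ is then a citation (to \cite{BNW2012}), and Block--Weinberger finishes the amenability equivalence. For the uncountable-dimension clause, the paper cites specific results in Blank's thesis (Theorem~6.6.1 and Proposition~6.7.2) giving dimension at least $2^{2^{|\Gamma|}}$, which is more precise than the refinement you gesture at. Your geometric cap-product route would also work, but note that a cocompact model for $E\Gamma$ of dimension $d$ is not known to exist for an arbitrary PD group, so you would in any case have to retreat to the algebraic cap product on a finite-type projective resolution --- at which point you are essentially reproving the one-line duality the paper uses for free.
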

\begin{proof}
Recall that $\ell^{\infty}$-cohomology is quasi-isometry invariant and a finite index subgroup of a Poincar\'{e} duality group is again a Poincar\'{e} duality group \cite{Johnson_Wall}. Passing to an index at most $2$ subgroup of $\G$, we can assume it is an orientable Poincar\'{e} duality group. By duality, we have
\begin{align*}
    H^d_{(\infty)}(\G; \R)&=H^d(\G; \ell^{\infty}(\G, \R))\\
    &=H_0(\G; \ell^{\infty}(\G, \R))\\
    &=\ell^{\infty}(\G, \R)_{\G}.
\end{align*}
Moreover, $H_0(\G; \ell^{\infty}(\G, \R))$ is isomorphic to uniformly finite homology of $G$ in degree $0$ \cite{BNW2012}. By the fundamental result of Block and Weinberger \cite{BlockWein92}, amenable groups are exactly those groups for which uniformly finite homology in degree $0$ does not vanish. If $\G$ amenable, then combining Theorem 6.6.1 and Proposition 6.7.2 of \cite{Blank_thesis}, it follows that $\ell^{\infty}(\G, \R)_{\G}$ has dimension at least $2^{2^{|\G|}}$. 
\end{proof}

The next proposition is our main tool for establishing non-vanishing of $\ell^{\infty}$-cohomology in the top degree.

\begin{prop}\label{non_vanish_gen} Let $A=\Z$ or $\R$. Let $\Gamma$ be a group that has an amenable Poincar\'{e} duality subgroup $N$ of dimension $d\geq 1$ and $d=\vcd \, \Gamma<\infty$ or $d=\cd_{\Q} \, \Gamma<\infty$. Then $H^d_{(\infty)}(\Gamma, A)$ contains a continuum of linearly independent elements.
\end{prop}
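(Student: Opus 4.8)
The plan is to combine Lemma \ref{lem:PD} with the surjectivity result of Theorem \ref{l_inf_gen}, using the amenable Poincar\'{e} duality subgroup $N$ as the ``source'' of non-vanishing $\ell^\infty$-cohomology. First I would observe that since $N$ is an amenable Poincar\'{e} duality group of dimension $d \geq 1$, Lemma \ref{lem:PD} gives that $H^d_{(\infty)}(N; \R)$ has uncountably infinite dimension over $\R$; moreover by Lemma \ref{int_real} (valid since $d \geq 2$ — I would need to handle $d=1$ separately, but there $\Gamma$ is virtually free or has $\cd_\Q = 1$, and an amenable PD$_1$ subgroup forces $\Gamma$ to be virtually $\Z$, a case one can treat directly) the same holds with $\Z$-coefficients, so in both cases $A = \Z$ or $A=\R$ we have $H^d_{(\infty)}(N; A)$ containing a continuum of linearly independent elements. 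The goal is then to push these up to $\Gamma$ along a ``transfer'' or section of the restriction map.

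The key input is that $N$, being an amenable Poincar\'{e} duality group of dimension $d$, is of type $FP$ (indeed of type $FP_\infty$, even $FP$ over $\Z$, hence over any $R$ with $\Z \leq R \leq \Q$), and $\cd_R N = d$. If $d = \vcd\,\Gamma$ or $d = \cd_\Q \Gamma$ and $\Gamma$ is assumed (as in the applications) to have $\cd_R \Gamma = d$ for the appropriate coefficient ring $R$ — here one takes $R = \Z$ when working with $A = \Z$ and $d = \vcd\,\Gamma$ forces integral cohomological dimension $d$ after noting $\Gamma$ is torsion-free up to passing to finite index (and $\ell^\infty$-cohomology is a commensurability/quasi-isometry invariant, so we may pass to a torsion-free finite-index subgroup $\Gamma'$ containing $N$ by intersecting), and $R = \Q$ when $d = \cd_\Q \Gamma$. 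Then Theorem \ref{l_inf_gen} applies with $H = N$: the restriction homomorphism
$$
H^d_{(\infty)}(\Gamma; V) \longrightarrow H^d_{(\infty)}(N; V)
$$
is surjective for every normed $R$-module $V$; taking $V = A$ (with trivial action, which by the remark after Lemma \ref{lem:equiv_infty} loses no generality) gives that $H^d_{(\infty)}(\Gamma; A)$ surjects onto $H^d_{(\infty)}(N; A)$. Since a vector space surjecting onto one of uncountable dimension must itself have uncountable dimension, and since a continuum of linearly independent elements downstairs lifts (choosing preimages) to a continuum of linearly independent elements upstairs, we conclude $H^d_{(\infty)}(\Gamma; A)$ contains a continuum of linearly independent elements.

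The main obstacle I expect is purely bookkeeping around cohomological dimension and torsion: ensuring that $\cd_R \Gamma = d$ (not merely $\vcd$ or $\cd_\Q$) so that Theorem \ref{l_inf_gen} literally applies — this is where passing to a suitable finite-index torsion-free subgroup $\Gamma' \leq \Gamma$ with $N \leq \Gamma'$ is needed, after which $\cd_\Z \Gamma' = \vcd\,\Gamma = d$, and quasi-isometry invariance of $\ell^\infty$-cohomology (Remark \ref{rmk:quasi_isom_inv}) transports the conclusion back to $\Gamma$. In the $\cd_\Q$ case no such passage is needed since $\cd_\Q$ is already a rational cohomological dimension and $R = \Q$ works directly. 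A secondary point is the $d = 1$ edge case, which must be disposed of by an ad hoc argument (an amenable PD$_1$ subgroup of a group of (V)cd one). Everything else — that amenable PD groups are $FP$, that $H^d_{(\infty)}(N;A)$ is big, that surjections preserve large dimension — is either standard or already recorded in the excerpt.
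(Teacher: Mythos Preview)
Your proposal is correct and follows essentially the same route as the paper: pass to a finite-index subgroup so that $\cd_R\Gamma=d$ (using quasi-isometry invariance of $\ell^\infty$-cohomology), then combine Theorem~\ref{l_inf_gen} applied to $H=N$ with Lemma~\ref{lem:PD}, reducing to $A=\R$ via Lemma~\ref{int_real}. One small simplification over your bookkeeping: you need not arrange $N\leq\Gamma'$, since $N\cap\Gamma'$ is again an amenable Poincar\'{e} duality group of dimension $d$ and can serve as the subgroup in Theorem~\ref{l_inf_gen}.
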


\begin{proof} By Lemma \ref{int_real}, it suffices to consider $A=\R$.  Recall that $\ell^{\infty}$-cohomology is quasi-isometry invariant and a finite index subgroup of a Poincar\'{e} duality group is again a Poincar\'{e} duality group \cite{Johnson_Wall}. Passing to a finite index subgroup of $\Gamma$, we can either assume that $d=\cd \, \Gamma<\infty$ or $d=\cd_{\Q} \, \Gamma<\infty$. The result now follows from Theorem \ref{l_inf_gen} and Lemma \ref{lem:PD}.
\end{proof}

It turns out that many well-known classes of groups contain free abelian or nilpotent subgroups of large Hirsch length. 

\begin{cor}\label{non_vanish_lattice}  Let $A=\Z$ or $\R$. Suppose $\Gamma$ is one of the following groups and $d=\vcd \, \Gamma\geq 1$:
\begin{enumerate}[label=(\roman*), nosep]
    \item \label{lattice_1}  $\Gamma$ is a finitely generated right-angled Artin group;
    \item \label{lattice_2}  $\Gamma$ is the Bestvina-Brady group of any finite flag complex $L^n$ with $H^n(L; A)=0$;
    \item \label{lattice_3}  $\Gamma$ is the mapping class group $\mathrm{Mod}(S_2)$ of a closed orientable surface of genus $2$;
    \item \label{lattice_4}  $\Gamma=\sln (n,\Z)$ for $n\geq 2$;
    \item \label{lattice_5}  $\Gamma=\mathrm{Out}(\mathbb F_n)$ for $n\geq 2$.
\end{enumerate}

 Then the cokernel of the comparison map $\iota^d:H^d(\Gamma; A)\to H^d_{(\infty)}(\Gamma; A)$ contains a continuum of linearly independent elements.
\end{cor}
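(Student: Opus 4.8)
The plan is to refine the proof of Proposition \ref{non_vanish_gen} by keeping track of the comparison maps. By Lemma \ref{int_real} it suffices to treat $A=\R$ when $d\geq 2$; the groups with $\vcd\G=1$ are virtually free and are handled in the same way, using the evident $\Z$-coefficient analogue of Lemma \ref{lem:PD} when $A=\Z$. So fix $A=\R$. For each of the five families I would exhibit an amenable, orientable Poincar\'e duality subgroup $N\leq\G$ with $\dim N=d=\vcd\G$, as follows. For a right-angled Artin group $\G=A_S$, take $N=\Z^d$ generated by a maximal clique of $S$, using that $\cd A_S=\vcd A_S$ equals the clique number of $S$, which is $d$. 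For a Bestvina--Brady group $BB_L$ of a flag complex $L=L^n$ with $H^n(L;A)=0$, take $N=\Z^n$ obtained by intersecting the $\Z^{n+1}\leq A_{L^{(1)}}$ spanned by an $n$-simplex of $L$ with the kernel of the height homomorphism; here one uses that, since the top homology of an $n$-dimensional complex is free, the condition $H^n(L;A)=0$ is equivalent to $\tilde H_n(L;\Z)=0$, whence the cohomology computations of Leary--Saadeto\u{g}lu give $\cd_{\Q}BB_L=\vcd BB_L=n$. For $\mathrm{Mod}(S_2)$, take $N=\Z^3$ generated by the Dehn twists about the three curves of a pants decomposition, using Harer's computation $\vcd\,\mathrm{Mod}(S_2)=3=3g-3$. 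For $\sln(n,\Z)$, take $N$ to be the upper unitriangular subgroup, a torsion-free nilpotent (hence amenable and orientable Poincar\'e duality) group of Hirsch length $\binom{n}{2}=\vcd\,\sln(n,\Z)$ by the Borel--Serre bound. For $\mathrm{Out}(\mathbb F_n)$, take $N=\Z^{2n-3}$, using that $\vcd\,\mathrm{Out}(\mathbb F_n)=2n-3$ (Culler--Vogtmann) and that this is attained by a free abelian subgroup.

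Granting such an $N$, I first note that $\cd_{\Q}\G=d$: one has $\cd_{\Q}\G\leq\vcd\G=d$ by the transfer argument over $\Q$, and $\cd_{\Q}\G\geq\cd_{\Q}N=d$ because $N\leq\G$. Since $N$ is of type $FP(\Q)$ (being a Poincar\'e duality group) and $\cd_{\Q}\G=d$, Theorem \ref{l_inf_gen} shows that the restriction map $H^d_{(\infty)}(\G;\R)\twoheadrightarrow H^d_{(\infty)}(N;\R)$ is surjective. The inclusion $N\hookrightarrow\G$ moreover yields a commutative square whose rows are the comparison maps $\iota^d_\G$ and $\iota^d_N$ and whose columns are the restriction maps in ordinary and in $\ell^{\infty}$-cohomology; since the right column is surjective, a short diagram chase produces a surjection $\operatorname{coker}(\iota^d_\G)\twoheadrightarrow\operatorname{coker}(\iota^d_N)$ of $\R$-vector spaces.

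It then remains to show that $\operatorname{coker}(\iota^d_N)$ contains a continuum of linearly independent elements. As $N$ is an orientable Poincar\'e duality group of dimension $d$, duality gives $H^d(N;\R)\cong H_0(N;\R)=\R$, so $\operatorname{im}\iota^d_N$ is at most one-dimensional; and as $N$ is amenable and countably infinite, Lemma \ref{lem:PD} and its proof (via the Block--Weinberger characterisation of amenability) give $\dim_{\R}H^d_{(\infty)}(N;\R)\geq 2^{2^{\aleph_0}}$. Hence $\dim_{\R}\operatorname{coker}(\iota^d_N)\geq 2^{2^{\aleph_0}}$, and lifting a linearly independent family of cardinality $2^{\aleph_0}$ along the surjection $\operatorname{coker}(\iota^d_\G)\twoheadrightarrow\operatorname{coker}(\iota^d_N)$ gives the desired continuum of linearly independent elements in $\operatorname{coker}(\iota^d_\G)$.

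The diagram chase and the cardinal bookkeeping are routine; the substantive input is the group theory of the first paragraph. I expect the main obstacle to be the Bestvina--Brady case: deducing from the bare hypothesis $H^n(L;A)=0$ that $\vcd BB_L=\dim L$, so that $\Z^n$ is genuinely a \emph{top-dimensional} amenable Poincar\'e duality subgroup — this needs the cohomology computations of Leary--Saadeto\u{g}lu — whereas the other four cases only require citing the relevant (virtual) cohomological dimensions together with the standard constructions of maximal-rank abelian or nilpotent subgroups.
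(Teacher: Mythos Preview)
Your argument is correct and shares the same first step with the paper: in each case one locates an amenable Poincar\'e duality subgroup $N\leq\Gamma$ of dimension $d$ (free abelian, or upper unitriangular for $\sln(n,\Z)$), and invokes Theorem~\ref{l_inf_gen} to make the restriction map in $\ell^{\infty}$-cohomology surjective in degree $d$.

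Where you diverge is in the cokernel step. The paper concludes by bounding the \emph{domain} of $\iota^d_\Gamma$: it notes that in cases \ref{lattice_1}, \ref{lattice_3}--\ref{lattice_5} the group $\Gamma$ is of type $VF$, so $H^d(\Gamma;A)$ is finitely generated, while in case \ref{lattice_2} one cites a result of Leary that $H^d(\Gamma;A)$ is countably generated; since Proposition~\ref{non_vanish_gen} already gives a continuum of independent elements in $H^d_{(\infty)}(\Gamma;A)$, the cokernel must be large. You instead push the comparison map down to $N$ via the commuting square and use that $H^d(N;\R)\cong\R$ by Poincar\'e duality, so $\operatorname{coker}(\iota^d_N)$ is huge and surjects from $\operatorname{coker}(\iota^d_\Gamma)$. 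Your route is a bit more self-contained---it avoids the extra citation for the countable generation of $H^d(BB_L;A)$ in case \ref{lattice_2}---while the paper's route is more direct once one is willing to quote the finiteness/countability of ordinary cohomology. Both are short and valid.
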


\begin{proof}  We first check that each case satisfies the hypothesis of Proposition \ref{non_vanish_gen} where $N$ can be taken either abelian or nilpotent. 

\noindent \ref{lattice_1} The dimension of the flag complex $L$ for $\Gamma$ equals to $d-1$ (see for example \cite[Proposition 18]{Leary}) and hence $\Gamma$ contains a subgroup isomorphic to $\Z^{d}$. 

\noindent \ref{lattice_2} $\Gamma$ contains a free abelian subgroups of rank equal to the dimension of $L$ which is $n$. By our hypothesis and \cite[Theorem 22]{Leary}, $\cd \, \Gamma=n$.

\noindent \ref{lattice_3} In this case $d=3$ and $\mathrm{Mod}(S_2)$ contains a group isomorphic to $\Z^3$ generated by Dehn twists \cite{Harer}. 

\noindent \ref{lattice_4} It is known that $\vcd \, \Gamma =n(n-1)/2$ \cite{brown_book}. The unipotent subgroup of $\Gamma$ of upper triangular matrices is a nilpotent subgroup of Hirsch length (and hence rational cohomological dimension) equal to $n(n-1)/2$.

\noindent \ref{lattice_5} It is shown in \cite{Culler_Vogtmann} that $\mathrm{Out}(\mathbb F_n)$ contains a free abelian subgroup of rank $d$.

Proposition \ref{non_vanish_gen} implies that $H^d_{(\infty)}(\Gamma; A)$ contains a continuum of linearly independent elements. Our stronger assertion follows, since in each of the cases \ref{lattice_1} and \ref{lattice_3}-\ref{lattice_5}, the groups are of type $VF$, in which case $H^d(\Gamma; A)$ is a finitely generated $A$-module. In case \ref{lattice_2}, it follows by Corollary 12 of \cite{Leary} that $H^d(\Gamma; A)$ is countably generated as an $A$-module.
\end{proof}

\begin{lemma}\label{lem:MV} Let $G$ be the fundamental group of a graph of groups with vertex groups $\{G_v\}_{v\in V}$ and edge groups $\{G_e\}_{e\in E}$. Then there is a long exact Mayer-Vietoris sequence:
    $$\dots \to \prod_{e\in E} H^{i-1}_{(\infty)}(G_e; \ell^{\infty}({G_e}\backslash G,W))\to H^{i}_{(\infty)}(G; W)\to 
    \prod_{v\in V}H^{i}_{(\infty)}(G_v; \ell^{\infty}({G_v}\backslash G,W))\to $$ 
    $$\to\prod_{e\in E}H^{i}_{(\infty)}(G_e; \ell^{\infty}({G_e}\backslash G,W))\to \dots$$
    for all $i> 0$ and normed $\R G$-modules $W$.
\end{lemma}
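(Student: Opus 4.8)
The plan is to realise the claimed sequence as the long exact sequence in group cohomology $H^{*}(G;-)$ attached to a short exact sequence of coefficient $\Z G$-modules coming from the Bass--Serre tree $T$ of the graph of groups. Since $T$ is contractible, its augmented simplicial chain complex gives a short exact sequence of $\Z G$-modules
$$0\longrightarrow C_{1}(T;\Z)\longrightarrow C_{0}(T;\Z)\longrightarrow \Z\longrightarrow 0.$$
Choosing a lift to $T$ of each vertex and of each oriented edge of the underlying graph, Bass--Serre theory identifies $C_{0}(T;\Z)\cong\bigoplus_{v\in V}\Z[G/G_{v}]$ and $C_{1}(T;\Z)\cong\bigoplus_{e\in E}\Z[G/G_{e}]$ as $\Z G$-modules (vertex and edge stabilisers in $T$ being conjugates of the $G_{v}$ and the $G_{e}$), the boundary map being assembled from the edge-inclusion maps $G_{e}\hookrightarrow G_{v}$ of the graph of groups.

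Next I would apply $\mathrm{Hom}_{\Z}(-,\ell^{\infty}(G,W))$, with its diagonal $G$-action, to this sequence. As $\Z$ is free abelian the sequence splits over $\Z$, so this functor preserves exactness and yields a short exact sequence of $\Z G$-modules
$$0\to\ell^{\infty}(G,W)\to\prod_{v\in V}\mathrm{Hom}_{\Z}(\Z[G/G_{v}],\ell^{\infty}(G,W))\to\prod_{e\in E}\mathrm{Hom}_{\Z}(\Z[G/G_{e}],\ell^{\infty}(G,W))\to 0.$$
Its long exact sequence in $H^{*}(G;-)$, together with the fact that group cohomology commutes with the products over $V$ and $E$, is — after the terms are identified — exactly the Mayer--Vietoris sequence in the statement; by definition the $G$-terms $H^{i}(G;\ell^{\infty}(G,W))$ are $H^{i}_{(\infty)}(G;W)$.

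It remains to identify the vertex and edge terms. For each $v$ the $\Z G$-module $\mathrm{Hom}_{\Z}(\Z[G/G_{v}],\ell^{\infty}(G,W))$ is the coinduced module $\mathrm{Coind}_{G_{v}}^{G}\,\mathrm{Res}_{G_{v}}\ell^{\infty}(G,W)$, so Shapiro's lemma gives $H^{i}(G;\mathrm{Hom}_{\Z}(\Z[G/G_{v}],\ell^{\infty}(G,W)))\cong H^{i}(G_{v};\mathrm{Res}_{G_{v}}\ell^{\infty}(G,W))$. Choosing a transversal for the right cosets $G_{v}\backslash G$, left translation identifies $G$ with $G_{v}\times(G_{v}\backslash G)$, equivariantly for the $G_{v}$-action on the first factor; hence $\mathrm{Res}_{G_{v}}\ell^{\infty}(G,W)\cong\ell^{\infty}(G_{v},\ell^{\infty}(G_{v}\backslash G,W))$ as normed $\R G_{v}$-modules, and by definition this computes $H^{i}_{(\infty)}(G_{v};\ell^{\infty}(G_{v}\backslash G,W))$ (and, as noted after Lemma \ref{lem:equiv_infty}, the coefficient action is immaterial). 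The edge terms are treated identically, and unwinding the isomorphisms shows the maps $\prod_{v}H^{i}_{(\infty)}(G_{v};-)\to\prod_{e}H^{i}_{(\infty)}(G_{e};-)$ are induced by the inclusions $G_{e}\hookrightarrow G_{v}$.

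I expect the one delicate point to be precisely this last step: one must keep careful track of left/right coset conventions so that the coefficient modules appearing are genuinely $\ell^{\infty}(G_{v}\backslash G,W)$ and $\ell^{\infty}(G_{e}\backslash G,W)$ as stated, and check that the coinduction--restriction and Shapiro isomorphisms are natural enough to make the connecting maps and the vertex-to-edge maps come out as claimed. The remaining ingredients — splitting of the tree sequence over $\Z$, exactness of $\mathrm{Hom}_{\Z}(-,\ell^{\infty}(G,W))$ on it, Shapiro's lemma, and commuting cohomology past products — are standard.
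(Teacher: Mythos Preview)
Your proposal is correct and follows essentially the same route as the paper. The paper's proof is terser: it simply invokes the standard Mayer--Vietoris long exact sequence in group cohomology for a graph of groups with coefficients $\ell^{\infty}(G,W)$, and then applies the $\R H$-module isomorphism $\ell^{\infty}(G,W)\cong\ell^{\infty}(H,\ell^{\infty}(H\backslash G,W))$ to each vertex and edge term. You have unpacked the first step by deriving the Mayer--Vietoris sequence from the Bass--Serre tree via Shapiro's lemma, and your coefficient identification $\mathrm{Res}_{G_v}\ell^{\infty}(G,W)\cong\ell^{\infty}(G_v,\ell^{\infty}(G_v\backslash G,W))$ is exactly the isomorphism the paper uses; so the two arguments are the same in substance, differing only in how much of the standard machinery is spelled out.
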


\begin{proof} For any subgroup $H\leq G$, there is an $\R H$-module isomorphism
$$\ell^{\infty}(G,W)\cong \ell^{\infty}(H,\ell^{\infty}(H\backslash G,W)).$$ The result now follows from applying this isomorphism to the Mayer-Vietoris long exact sequence of $G$ in group cohomology with $\ell^{\infty}(G,W)$ coefficients.
    \end{proof}

\begin{cor}\label{quasi_graph} 
Let $\G$ be a finitely generated group with $2\leq \cd_{\Q} \, \G<\infty$. Suppose that $\G$ has an  amenable Poincar\'{e} duality subgroup of dimension $\cd_{\Q} \, \G$.  Suppose $\G$ is quasi-isometric to the fundamental group $G$ of a finite graph of hyperbolic groups. Then  $\cd_{\Q} \, G=\cd_{\Q} \, \G= 2$.
\end{cor}

\begin{proof} Since rational cohomological dimension is a quasi-isometry invariant amongst groups with finite rational cohomological dimension \cite[Theorem 1.2]{sauer}, denote
$$n:=\cd_{\Q} \, G=\cd_{\Q} \, \G.$$ 

Suppose $n\geq 3$. Let $\{G_v\}_{v\in V}$ be the vertex groups and $\{G_e\}_{e\in E}$ be the edge groups of the finite graph of groups.  We  apply the Mayer-Vietoris sequence of Lemma \ref{lem:MV}:
  $$\dots \to \prod_{e\in E} H^{i-1}_{(\infty)}(G_e; \ell^{\infty}({G_e}\backslash G,\R))\to H^{i}_{(\infty)}(G; \R)\to 
    \prod_{v\in V}H^{i}_{(\infty)}(G_v; \ell^{\infty}({G_v}\backslash G,\R))\to $$ 
    $$\to\prod_{e\in E}H^{i}_{(\infty)}(G_e; \ell^{\infty}({G_e}\backslash G,\R))\to \dots$$
By \cite[Theorem 0]{mineyev00}, 
$$H^{n}_{(\infty)}(G_v; \ell^{\infty}({G_v}\backslash G,\R))= H^{n-1}_{(\infty)}(G_e; \ell^{\infty}({G_e}\backslash G,\R))=0.$$
Then $H^{n}_{(\infty)}(G; \R)$ vanishes, implying that  $H^{n}_{(\infty)}(\G; \R)$ also vanishes, but by Proposition \ref{non_vanish_gen}, $H^{n}_{(\infty)}(\G; \R)\ne 0$. This is a contradiction.
\end{proof}

\noindent We recall that by a result of Dunwoody, finitely generated groups of rational cohomological dimension at most one are virtually free \cite{Dunwoody}. Thus the assumption of Corollary \ref{quasi_graph} that $\cd_{\Q} \, \G\geq 2$ is in fact equivalent to $G$ not being virtually free.

\begin{cor}\label{quasi_graph_app} Let $\G$ be a finitely generated group satisfying the hypothesis of Corollary \ref{intro_non_vanish_lattice}. Suppose $\G$ is quasi-isometric to the fundamental group of a finite graph of hyperbolic groups. Then either $\G$ is a right angled Artin group or a Bestvina-Brady groups of  cohomological dimension at most $2$.
\end{cor}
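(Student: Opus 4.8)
The plan is to feed the quasi-isometry rigidity of Corollary \ref{quasi_graph} into the class list of Corollary \ref{non_vanish_lattice}. By hypothesis $\G$ is a finitely generated group of one of the types (i)--(v) in Corollary \ref{non_vanish_lattice}, and the proof of that corollary produces in each case an amenable Poincar\'e duality subgroup $N\le\G$ (free abelian or nilpotent) whose dimension equals $d:=\vcd\G$. Since a $d$-dimensional Poincar\'e duality group satisfies $\cd_\Q N=d$, and since $\cd_\Q\G=\cd_\Q H\le\cd H=\vcd\G$ for a torsion-free finite-index subgroup $H\le\G$, we get $\cd_\Q\G=d<\infty$. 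If $d\le 1$ then $\G$ is virtually free by \cite{Dunwoody} and the statement is immediate, so I would henceforth assume $d=\cd_\Q\G\ge 2$.

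Next I would verify that $\G$ satisfies the hypotheses of Corollary \ref{quasi_graph}: it is finitely generated with $2\le\cd_\Q\G<\infty$, it contains the amenable Poincar\'e duality subgroup $N$ of dimension exactly $\cd_\Q\G$, and by assumption it is quasi-isometric to the fundamental group of a finite graph of hyperbolic groups. Corollary \ref{quasi_graph} then forces $\cd_\Q\G=2$, i.e.\ $\vcd\G=2$.

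The final step is to discard from (i)--(v) every type incompatible with $\vcd\G=2$. Using the virtual cohomological dimensions recalled in the proof of Corollary \ref{non_vanish_lattice} --- $\vcd\,\mathrm{Mod}(S_2)=3$, $\vcd\,\sln(n,\Z)=n(n-1)/2$ and $\vcd\,\mathrm{Out}(\mathbb F_n)=2n-3$ --- none of the types (iii), (iv), (v) has virtual cohomological dimension $2$. Hence $\G$ is of type (i) or (ii); that is, $\G$ is a right-angled Artin group or a Bestvina--Brady group, and in either (torsion-free) case $\cd\G=\cd_\Q\G=2$: for a right-angled Artin group because $\cd$ is the clique number, and for a Bestvina--Brady group of type (ii) as recorded in the proof of Corollary \ref{non_vanish_lattice}. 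This yields the assertion.

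I do not anticipate a real obstacle, since the two substantial ingredients --- the rigidity statement of Corollary \ref{quasi_graph} and the class list of Corollary \ref{non_vanish_lattice} --- are already in hand; the only points requiring care are the bookkeeping between $\cd_\Q$, $\vcd$ and $\cd$ on the surviving classes and the harmless low-dimensional reduction $\cd_\Q\G\le 1$.
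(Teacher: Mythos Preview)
Your approach is essentially identical to the paper's: extract from the proof of Corollary~\ref{non_vanish_lattice} an amenable (abelian or nilpotent) Poincar\'e duality subgroup of dimension $\cd_{\Q}\G$, invoke Corollary~\ref{quasi_graph} to force $\cd_{\Q}\G=2$, and eliminate cases (iii)--(v) on dimensional grounds. The paper's two-line proof is simply a compression of yours, omitting the $\vcd$/$\cd_{\Q}$ bookkeeping you spell out.

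One small point: the case $d\le 1$ is not as immediate as you assert. With $d=1$ one could have $\G=\sln(2,\Z)$ or $\G=\mathrm{Out}(\mathbb F_2)$; both are virtually free (hence quasi-isometric to a free group, trivially a finite graph of hyperbolic groups) yet neither is a RAAG nor a Bestvina--Brady group, since both contain torsion. The paper's proof does not treat this boundary case either --- it applies Corollary~\ref{quasi_graph} without verifying its hypothesis $\cd_{\Q}\G\ge 2$ --- so this is really a minor imprecision in the stated range of the corollary rather than a defect in your strategy.
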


\begin{proof} In the proof of Corollary \ref{non_vanish_lattice}, we have established that  $\G$ has either an abelian or a nilpotent  Poincar\'{e} duality subgroup of dimension equal to $\cd_{\Q} \, \G$. By Corollary \ref{quasi_graph},  $\cd_{\Q} \, \G= 2$, which rules out (\ref{lattice_3})-(\ref{lattice_5}).
\end{proof}

\subsection{$\ell^{\infty}$-cohomological dimension}
Suppose $G$ is a  group. Define the {\it $\ell^{\infty}$-cohomological dimension of $G$} by
$$\cdl G := \sup \{ n\; | \; H^n_{(\infty)}(G; V)\ne 0 \mbox{ for some normed $\R$-module } V \}.$$
Let $V$ be a normed $\R$-module  such that $H^n_{(\infty)}(G; V)\ne 0$. Viewing $V$ as a normed $\Q$-module, by the last claim of the proof of Lemma \ref{lem:equiv_infty}, we obtain $H^n_{\Q}(G; \ell^{\infty}(G,V))=H^n_{(\infty)}(G; V).$
This shows that $\cd_{\Q} \, G\geq n$ and hence $\cd_{\Q} \, G \geq \cdl G$.

Note also  that when $G$ is finitely generated, then by Remark \ref{rmk:quasi_isom_inv}, $\cdl G$ is a quasi-isometry invariant of $G$. We end this section with some applications to $\cdl G$. 
\begin{lemma}\label{lem:hyp_cd} Let $G$ be a group of type $FP_2(\Q)$. Then $G$ is an infinite hyperbolic group if and only if $\cdl G=1$.
\end{lemma}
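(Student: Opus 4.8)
The plan is to prove the two implications separately, each time reducing to results already established in the excerpt. For the forward direction, suppose $G$ is an infinite hyperbolic group of type $FP_2(\Q)$. Hyperbolic groups have finite (integral, hence rational) cohomological dimension unless they contain torsion, but in any case $\cd_\Q G<\infty$; more importantly, by Mineyev's vanishing theorem \cite{mineyev00} (as invoked in Corollary \ref{cor_i_inft} and Corollary \ref{quasi_graph}) one has $H^n_{(\infty)}(G;V)=0$ for all $n\geq 2$ and all normed $\R$-modules $V$. Hence $\cdl G\leq 1$. It remains to rule out $\cdl G=0$: if $\cdl G=0$ then $H^1_{(\infty)}(G;\R)=0$, and I would argue that for an infinite finitely generated group $H^1_{(\infty)}(G;\R)\neq 0$ — indeed, using the description of $\ell^\infty$-cohomology via orbitally-bounded cochains on the bar resolution (Lemma \ref{lem:equiv_infty}) together with Elek's cochain complex on the Cayley graph (Remark \ref{rmk:quasi_isom_inv}), a $1$-cocycle that is unbounded but has bounded coboundary — e.g.\ a word-length-type function truncated appropriately, or more simply the observation that $\ell^\infty(G,\R)/(\text{constants})$ surjects onto $H^1_{(\infty)}(G;\R)$ via the coboundary and is nonzero for infinite $G$ — shows $H^1_{(\infty)}(G;\R)\neq 0$. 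So $\cdl G=1$.

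For the converse, suppose $G$ is of type $FP_2(\Q)$ with $\cdl G=1$. In particular $H^2_{(\infty)}(G;V)=0$ for all normed $\R$-modules $V$, and in particular for $V=\ell_\infty=\ell^\infty(\N,\R)$. Theorem \ref{Gersten_l_hyp} then applies directly: it asserts precisely that a group of type $FP_2(\Q)$ with $H^2_{(\infty)}(G;\ell_\infty)=0$ is hyperbolic. Thus $G$ is hyperbolic. Finally $G$ must be infinite: a finite group $G$ has $\cd_\Q G=0$ and therefore $\cdl G=0\neq 1$ (equivalently, $H^1_{(\infty)}(G;V)=H^1(G;\ell^\infty(G,V))=0$ since all higher cohomology of a finite group with rational-module coefficients vanishes, $\ell^\infty(G,V)$ being a $\Q G$-module). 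This contradicts $\cdl G=1$, so $G$ is infinite hyperbolic.

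The only genuinely new input needed is the claim that an infinite finitely generated group has $H^1_{(\infty)}(G;\R)\neq 0$; everything else is a citation of Theorem \ref{Gersten_l_hyp}, Mineyev's vanishing \cite{mineyev00}, and Lemma \ref{lem:equiv_infty}. I expect this $H^1$ nonvanishing step to be the main (though minor) obstacle: one needs to exhibit an orbitally-bounded $1$-cochain $c$ on a projective resolution with $\delta c=0$ that is not a coboundary. Concretely, on the standard resolution the relevant statement is that the map $\ell^\infty(G,\R)\xrightarrow{\delta^0} \ell^\infty(G\times G,\R)^{bG}$ has image strictly smaller than the kernel of $\delta^1$; since the constants lie in the kernel of $\delta^0$ and the cokernel $H^1_{(\infty)}(G;\R)$ is well known to be nonzero for infinite groups (it contains, for instance, classes coming from the two ends of any bi-infinite geodesic line when $G$ has more than one end, and in the one-ended case follows from the nontriviality of $\ell^\infty$ first cohomology of an infinite locally finite graph — see Elek \cite{Elek}), one concludes $\cdl G\geq 1$. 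If preferred, one may instead cite that $\ell^\infty$-cohomological dimension of the infinite cyclic group is $1$ together with the fact that an infinite finitely generated group has a subgroup quasi-isometrically embedded\ldots{} but the cleanest route is the direct Cayley-graph computation of Elek, so I would phrase the argument that way.
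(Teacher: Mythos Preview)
Your proposal is correct, and the converse direction together with the bound $\cdl G\leq 1$ match the paper exactly (Theorem \ref{Gersten_l_hyp} and Mineyev \cite{mineyev00}).

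The only difference is in how you obtain $\cdl G\geq 1$ for an infinite hyperbolic $G$. The paper does not attempt the general statement: it simply notes that an infinite hyperbolic group contains a quasi-convex infinite cyclic subgroup $C$ and invokes \cite[Corollary 10.3]{gersten_lowerbounds}, which says restriction to a quasi-convex subgroup is surjective in $\ell^\infty$-cohomology, yielding $H^1_{(\infty)}(G;\R)\twoheadrightarrow H^1_{(\infty)}(C;\R)\neq 0$. Your route instead goes for the stronger fact that $H^1_{(\infty)}(G;\R)\neq 0$ for \emph{every} infinite finitely generated group. This is true, and your word-length suggestion is exactly the right one: if $d$ is the word metric from the identity, then $g\mapsto g\cdot d-d$ is a cocycle with values in $\ell^\infty(G,\R)$ (each value is bounded by $|g|$) which is not a coboundary, since any bounded primitive would differ from the unbounded $d$ by a constant. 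The paper's argument is a one-line citation tailored to the hyperbolic setting; yours is more general and self-contained, but as written it drifts through several alternative justifications (ends, Elek's graph cohomology, quasi-isometric embeddings of $\Z$) when the word-length cocycle already does the job cleanly --- I would keep just that and drop the rest.
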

\begin{proof} If $G$ satisfies $\cdl G=1$, then by Theorem \ref{Gersten_l_hyp}, it is hyperbolic. Conversely, if $G$ is hyperbolic, then $\cdl G\leq 1$ \cite{mineyev00}. Since $G$
contains a quasi-convex infinite cyclic subgroup $C$, by Corollary 10.3 of \cite{gersten_lowerbounds}, the restriction $C\hookrightarrow G$ induces a surjection  
$$H^1_{(\infty)}(G; \R)\twoheadrightarrow H^1_{(\infty)}(C; \R)\ne 0.$$ 
\noindent This shows that $\cdl G=1$. 
\end{proof}

Here is another  reformulation of Corollary \ref{cor_i_inft}.

\begin{cor}\label{cor_i_inft_reform}
Let $G$ be a hyperbolic group with $\cd_{\Q} \, G = n$, $n\geq 2$. Suppose $H$ is a subgroup of type $FP(\Q)$. Then  $\cdl H\leq n-1$.
\end{cor}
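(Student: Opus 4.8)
The plan is to deduce this directly from Corollary \ref{cor_i_inft}, of which it is merely a restatement in the language of $\ell^\infty$-cohomological dimension. First I would recall that since $H$ is a subgroup of $G$, one has $\cd_{\Q} H \leq \cd_{\Q} G = n$; in particular, from the inequality $\cdl H \leq \cd_{\Q} H$ noted just before Lemma \ref{lem:hyp_cd}, we already get $\cdl H \leq n$. So the only thing left to rule out is that $\cdl H = n$, i.e.\ that $H^n_{(\infty)}(H; V) \neq 0$ for some normed $\R$-module $V$.

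But $H$ is of type $FP(\Q)$, so Corollary \ref{cor_i_inft} applies verbatim and gives $H^n_{(\infty)}(H; V) = 0$ for every normed $\R$-module $V$. Combining this with the previous paragraph yields $\cdl H \leq n-1$, as claimed. (Unwinding Corollary \ref{cor_i_inft}: hyperbolicity of $G$ forces $H^n_{(\infty)}(G; V) = 0$ by \cite{mineyev00}, and then the surjectivity of the restriction map in Theorem \ref{l_inf_gen} — applicable since $\cd_R G = n$ with $R = \Q$ and $H$ is of type $FP(\Q) = FP_n(\Q)$ — transports this vanishing to $H$.)

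There is essentially no obstacle here: the corollary is a translation of an already-proved statement, and the proof is two lines. The only point requiring the slightest care is making sure the coefficient modules match — Theorem \ref{l_inf_gen} and Corollary \ref{cor_i_inft} are stated for normed $R$-modules with $R = \Q$, and since $\ell^\infty$-cohomology of a group does not depend on the module structure beyond the norm (Remark following Lemma \ref{lem:equiv_infty}), the vanishing for all normed $\Q$-modules $V$ is the same as the vanishing for all normed $\R$-modules $V$ that enters the definition of $\cdl$.
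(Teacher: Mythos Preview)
Your proof is correct and matches the paper's approach exactly: the paper does not even give a proof, stating the corollary directly as ``another reformulation of Corollary~\ref{cor_i_inft}''. Your final paragraph's concern about $\Q$- versus $\R$-module coefficients is unnecessary, since Corollary~\ref{cor_i_inft} is already stated for normed $\R$-modules $V$, which is precisely what enters the definition of $\cdl$.
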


Before stating the next result, we need the following lemma.

\begin{lemma}\label{lem:direct_sum}Let $H$ be a subgroup of $G$ and $V$ be a normed $\R$-module. Then $\ell^{\infty}(H,V)$ is a direct summand of $\ell^{\infty}(G,V)$ as an $\R H$-module.
\end{lemma}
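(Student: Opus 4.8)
The plan is to exhibit an explicit $\R H$-linear projection from $\ell^\infty(G,V)$ onto the subspace $\ell^\infty(H,V)$, using a transversal for the cosets $H\backslash G$. First I would fix a set $T$ of representatives for the right cosets $\{Ht : t\in T\}$, so that every $g\in G$ is uniquely $g = h t$ with $h\in H$, $t\in T$; we may and do choose the representative of the trivial coset to be $1_G$. The right $H$-action on $G$ preserves each coset $Ht$, so the map $g\mapsto t(g)\in T$ recording the coset of $g$ is $H$-equivariant in the sense that $t(hg)$ depends only on the coset, while the $H$-part transforms by left multiplication. Concretely, write $\varphi\colon G\to H$ for the function with $g = \varphi(g)\, t(g)$; then $\varphi(hg) = h\,\varphi(g)$ for $h\in H$.

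Next I would define the projection $\pi\colon \ell^\infty(G,V)\to\ell^\infty(H,V)$ by restricting a bounded function along the inclusion of the trivial-coset piece, i.e. $(\pi f)(h) := f(h)$ for $h\in H$, viewing $H\subseteq G$. This is clearly bounded (norm $\le 1$) and linear. To see it is $\R H$-linear, recall the $H$-action on $\ell^\infty(G,V)$ is $(h\cdot f)(g) = h\big(f(h^{-1}g)\big)$; then $(\pi(h\cdot f))(h') = (h\cdot f)(h') = h\big(f(h^{-1}h')\big) = h\big((\pi f)(h^{-1}h')\big) = (h\cdot \pi f)(h')$, using that $h^{-1}h'\in H$. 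So $\pi$ is an $\R H$-module homomorphism. It is a retraction of the natural inclusion $\iota\colon \ell^\infty(H,V)\hookrightarrow \ell^\infty(G,V)$ that extends a bounded function on $H$ by zero on the other cosets (or by any fixed bounded extension); one checks $\pi\circ\iota = \mathrm{id}$. Since $\iota$ is an $\R H$-module map with a one-sided inverse $\pi$, the module $\ell^\infty(G,V)$ splits as $\ell^\infty(H,V)\oplus\ker\pi$ as $\R H$-modules, which is the claim.

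One technical point to handle carefully: the inclusion $\iota$ should itself be $\R H$-linear, so "extend by zero off $H$" must be interpreted correctly — a function supported on $H$ and extended by $0$ to all of $G$ is sent by $h\cdot(-)$ to a function supported on $hH = H$, so this does give an $\R H$-submodule of $\ell^\infty(G,V)$ isometrically isomorphic to $\ell^\infty(H,V)$, and $\pi$ restricted to it is the identity. So in fact the cleanest formulation is: the $\R H$-submodule $W_0$ of functions vanishing outside $H$ is isometrically $\R H$-isomorphic to $\ell^\infty(H,V)$, and $\pi$ is a norm-one $\R H$-linear retraction onto $W_0$; hence $\ell^\infty(G,V) = W_0\oplus\ker\pi$. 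I expect the main (minor) obstacle to be bookkeeping with left versus right cosets and making sure the chosen transversal interacts correctly with the left $H$-action used in the definition of the module structure on $\ell^\infty(G,V)$; everything else is a routine verification. No appeal to amenability or to any finiteness of $[G:H]$ is needed, since boundedness is preserved under the coordinate projection regardless of the index.
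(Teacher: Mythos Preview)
Your proof is correct and follows essentially the same approach as the paper: both exhibit an $\R H$-linear section of the restriction map $\ell^\infty(G,V)\to\ell^\infty(H,V)$. The paper's section sends $f\in\ell^\infty(H,V)$ to $ht\mapsto f(h)$ (constant along a right transversal $T$), whereas you extend by zero off $H$; either choice works, and indeed your transversal apparatus is not actually needed for the extend-by-zero section you end up using.
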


\begin{proof} Let $T$ be a right transversal of $H$ in $G$. We define 
$$\varphi: \ell^{\infty}(H,V)\to \ell^{\infty}(G,V), \; f\mapsto (ht\mapsto f(h)), \; \forall h\in H, \; \forall t\in T.$$
One can check that $\varphi$ is an $H$-equivariant section of the restriction morphism from $\ell^{\infty}(G,V)$ to $\ell^{\infty}(H,V)$.
\end{proof}

\begin{cor}\label{rel_hyp_cd} Let $G$ be a finitely generated infinite group, and hyperbolic relative to $\mathcal H= \{H_i\}_{i\in I}$. Then 
$$\cdl G= \max\{ 1, \cdl H_i \; | \; i\in I\}.$$
\end{cor}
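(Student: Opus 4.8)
The plan is to prove the two inequalities separately. The lower bound $\cdl G \geq \max\{1, \cdl H_i\}$ is the easy half: since $G$ is infinite and finitely generated it contains an element of infinite order (or at any rate $H^1_{(\infty)}(G;\R)\neq 0$, which can be extracted exactly as in the proof of Lemma \ref{lem:hyp_cd} using \cite[Corollary 10.3]{gersten_lowerbounds}), giving $\cdl G \geq 1$. For each peripheral subgroup $H_i$ and each normed $\R$-module $V$ with $H^n_{(\infty)}(H_i;V)\neq 0$, I would use Lemma \ref{lem:direct_sum}: writing $H^n_{(\infty)}(H_i;V) = H^n(H_i;\ell^\infty(H_i,V))$ and noting $\ell^\infty(H_i,V)$ is an $\R H_i$-direct summand of $\ell^\infty(G,V)\cong\ell^\infty(H_i,\ell^\infty(H_i\backslash G,V))$, one gets that $H^n_{(\infty)}(H_i;V)$ is (up to the coefficient-swallowing identity from Remark after Lemma \ref{lem:equiv_infty}) a direct summand of $H^n_{(\infty)}(H_i;\ell^\infty(H_i\backslash G,V))$, which by Milizia's injectivity statement \cite[Theorem 1.1]{Milizia} injects into $H^n_{(\infty)}(G;\ell^\infty(G,V))=H^n_{(\infty)}(G;V)$ — more directly, the restriction map $H^n_{(\infty)}(G;W)\to\prod_i H^n_{(\infty)}(H_i;W)$ is split surjective onto each factor because of the section $\varphi$ of Lemma \ref{lem:direct_sum}. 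Hence $H^n_{(\infty)}(G;W)\neq 0$ for a suitable $W$, so $\cdl G \geq \cdl H_i$.

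For the upper bound $\cdl G \leq \max\{1,\cdl H_i\}$, set $n := \max\{1,\cdl H_i \mid i\in I\}$ and suppose for contradiction that $H^k_{(\infty)}(G;V)\neq 0$ for some $k > n$ and some normed $\R$-module $V$. The idea is to run the group-pair long exact sequence for the relatively hyperbolic pair $(G,\mathcal H)$, exactly the diagram \eqref{eq:compare} used in the proof of Theorem \ref{exact_rel_hyp}, but now without the amenability hypothesis: the bottom row
$$H^k_{(\infty)}(G,\mathcal H;V)\to H^k_{(\infty)}(G;V)\to \prod_{i\in I}H^k_{(\infty)}(H_i;V)$$
is exact (this is the $\ell^\infty$-cohomology of the pair, which fits into a long exact sequence by the standard construction). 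Since $k > n \geq \cdl H_i$, the product term vanishes, so $H^k_{(\infty)}(G;V)$ is a quotient of $H^k_{(\infty)}(G,\mathcal H;V)$. Now invoke the key vanishing input for relatively hyperbolic groups: $H^k_{(\infty)}(G,\mathcal H;V)=0$ for all $k\geq 2$. This is the relative analogue of Mineyev's result \cite{mineyev00} and is precisely the content of Milizia's characterization of relative hyperbolicity via relative $\ell^\infty$-cohomology \cite{Milizia} — the bounded-value cohomology of the pair vanishes in degrees $\geq 2$. Combining, $H^k_{(\infty)}(G;V)=0$ for all $k\geq 2$ that also exceed $n$, i.e. for all $k>n$ (note $n\geq 1$ already, and if $n=1$ the relevant $k$ are $\geq 2$ so the relative vanishing applies directly). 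This contradicts our assumption, completing the proof.

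The main obstacle I anticipate is making precise the "relative $\ell^\infty$-cohomology of the pair vanishes in degrees $\geq 2$" input and the associated long exact sequence with twisted coefficients $\ell^\infty(G,V)$: one must check that the pair cohomology $H^*_{(\infty)}(G,\mathcal H;V)$ is well-defined, sits in a long exact sequence with $H^*_{(\infty)}(G;V)$ and $\prod_i H^*_{(\infty)}(H_i;V)$, and that Milizia's vanishing (stated for suitable coefficients) applies to $V=\ell^\infty(G,W)$-type modules — here the coefficient-module-agnosticism noted after Lemma \ref{lem:equiv_infty} (the $G$-action on $V$ does not affect $\ell^\infty$-cohomology) is what lets us reduce to trivial coefficients and quote Milizia directly. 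A secondary point to handle carefully is the finite-versus-infinite indexing set $I$; since $G$ is finitely generated, $I$ may be taken finite (up to conjugacy), so the products are finite and cause no trouble, but this should be remarked. Everything else is a routine diagram chase of the kind already carried out in the proof of Theorem \ref{exact_rel_hyp}.
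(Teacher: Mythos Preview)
Your proposal is correct and takes essentially the same approach as the paper: both arguments rest on the long exact sequence of the pair $(G,\mathcal H)$ with coefficients $\ell^\infty(G,V)$, Milizia's vanishing $H^j_{(\infty)}(G,\mathcal H;V)=0$ for $j\geq 2$ (the paper cites this as \cite[Theorem~1.3]{Milizia}), and Lemma~\ref{lem:direct_sum} to exhibit each $H^j_{(\infty)}(H_i;V)$ as a summand of $H^j_{(\infty)}(G;V)$. The only organisational difference is that the paper disposes of the low-degree case by observing that if $G$ is hyperbolic then so are all peripherals (hence $\cdl G=m=1$), rather than arguing $\cdl G\geq 1$ directly as you do.
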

\begin{proof} Denote $m:=\max\{ 1, \cdl H_i \; | \; i\in I\}$, which may be infinity. If  $G$ is hyperbolic, then so are all peripheral subgroups because they are undistorted. In this case, $\cdl G= m=1$.

Suppose $\cdl G\geq 2$. Using the long exact sequence in group cohomology for the group pair $(G, \hh)$ with coefficients in $\ell^{\infty}(G,V)$, we have
 $$\cdots \to H^j_{(\infty)}(G, \hh; V)\to H^j_{(\infty)}(G; V)\to \prod_{i\in I}H^j(H_i; \ell^{\infty}(G,V))\to H^{j+1}_{(\infty)}(G, \hh; V)\to \cdots$$
By Theorem 1.3 of \cite{Milizia}, we know that $H^j_{(\infty)}(G, \hh; V)=0$ for all $j\geq 2$. So, the long exact sequence shows that 
$$H^{j}_{(\infty)}(G; V)\cong \prod_{i\in I}H^{j}(H_i; \ell^{\infty}(G,V)), \; \forall j\geq 2.$$
By Lemma \ref{lem:direct_sum}, $\ell^{\infty}(H_i,V)$ is a direct summand of $\ell^{\infty}(G,V)$; therefore, it follows that 
$H^{j}_{(\infty)}(H_i; V)$ is isomorphic to a direct summand of $H^{j}_{(\infty)}(G; V)$, for each $i\in I$. This gives us $\cdl G\geq m$.

On the other hand, since
$$H^{j}_{(\infty)}(G; V)\cong \prod_{i\in I}H^{j}_{(\infty)}(H_i; \ell^{\infty}(H_i\backslash G,V)), \; \forall j\geq 2,$$
we obtain $\cdl G =m$.
\end{proof}
\begin{cor}\label{rel_hyp_fund_grp} Suppose $\Gamma$ is the fundamental group of a non-compact complete finite-volume Riemannian $n$-dimensional manifold $M$ of negative sectional curvature bounded away from zero. Then $\cdl \Gamma =n-1$ and $H^{n-1}_{(\infty)}(\Gamma; \R)$ contains a continuum of linearly independent elements.
\end{cor}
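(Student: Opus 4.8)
The plan is to realise $\Gamma$ as a relatively hyperbolic group and then invoke Corollary \ref{rel_hyp_cd} together with the non-vanishing machinery of Proposition \ref{non_vanish_gen}. First I would recall the standard fact that the fundamental group $\Gamma$ of a complete finite-volume Riemannian manifold $M$ of pinched negative sectional curvature is hyperbolic relative to the collection $\mathcal{H}=\{H_1,\dots,H_k\}$ of cusp subgroups, each $H_j$ being virtually nilpotent; this is classical (Bowditch, Farb). Since $M$ is non-compact, this collection is non-empty. Each cusp subgroup $H_j$ is a finitely generated torsion-free virtually nilpotent group of Hirsch length $n-1$ (as it is the fundamental group of an $(n-1)$-dimensional infranilmanifold cross-section of the cusp), hence it contains a finite-index subgroup which is a torsion-free nilpotent group of Hirsch length $n-1$, i.e.\ a Poincar\'e duality group of dimension $n-1$, and being nilpotent it is amenable.

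Next I would compute $\cdl \Gamma$. Since $\Gamma$ is the fundamental group of an aspherical closed manifold no longer — $M$ is non-compact, so $\Gamma$ has $\cd_{\mathbb{Q}}\Gamma = n-1$ (the open manifold $M$ is homotopy equivalent to a finite complex of dimension $n-1$). Applying Corollary \ref{rel_hyp_cd} with $G=\Gamma$ gives $\cdl \Gamma = \max\{1, \cdl H_j : j\}$. Since $H_j$ is virtually a Poincar\'e duality group of dimension $n-1$ which is amenable, Lemma \ref{lem:PD} combined with quasi-isometry invariance gives $H^{n-1}_{(\infty)}(H_j;\mathbb{R})\neq 0$, so $\cdl H_j = n-1$ (it cannot exceed $\cd_{\mathbb{Q}}H_j = n-1$). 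Provided $n\geq 2$, which holds since $M$ is a non-compact manifold with a cusp (indeed $n\geq 2$ is needed for the statement to be non-vacuous; for $n=2$ the cusp subgroup is $\mathbb{Z}$ and $\cdl\Gamma=1$, consistent with $\Gamma$ being free), we conclude $\cdl\Gamma = n-1$.

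For the final assertion about a continuum of linearly independent elements in $H^{n-1}_{(\infty)}(\Gamma;\mathbb{R})$, I would apply Proposition \ref{non_vanish_gen} directly with $N = H_j$ (or rather its finite-index Poincar\'e duality nilpotent subgroup), noting $d = n-1 = \cd_{\mathbb{Q}}\Gamma < \infty$, so that $H^{n-1}_{(\infty)}(\Gamma;\mathbb{R})$ contains a continuum of linearly independent elements. Alternatively, and perhaps more cleanly, the isomorphism from the proof of Corollary \ref{rel_hyp_cd}, namely that $H^{n-1}_{(\infty)}(H_j;V)$ is a direct summand of $H^{n-1}_{(\infty)}(\Gamma;V)$ via Lemma \ref{lem:direct_sum}, transports the continuum of linearly independent elements in $H^{n-1}_{(\infty)}(H_j;\mathbb{R})$ (coming from Lemma \ref{lem:PD} applied to the amenable $PD_{n-1}$ subgroup of $H_j$) into $H^{n-1}_{(\infty)}(\Gamma;\mathbb{R})$.

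The main obstacle I anticipate is not the cohomological algebra but verifying the relative hyperbolicity and the structure of the cusps with the precise hypotheses stated: one must ensure that "negative sectional curvature bounded away from zero" together with finite volume genuinely yields the geometrically-finite structure with virtually nilpotent cusp subgroups of Hirsch length exactly $n-1$, and that each such cusp subgroup contains an amenable Poincar\'e duality subgroup of dimension $n-1$ — this last point uses the Margulis lemma and the fact that a torsion-free finitely generated nilpotent group of Hirsch length $h$ is a $PD_h$ group. Once these geometric inputs are in place, the rest is a direct citation of Corollary \ref{rel_hyp_cd}, Proposition \ref{non_vanish_gen}, and Lemma \ref{lem:PD}.
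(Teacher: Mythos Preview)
Your proposal is correct and, for the computation of $\cdl\Gamma$, follows exactly the paper's argument: relative hyperbolicity with virtually nilpotent cusp subgroups of Hirsch length $n-1$ (citing Farb), then Corollary~\ref{rel_hyp_cd} together with Lemma~\ref{lem:PD}.

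For the continuum claim your primary route differs slightly from the paper's. You invoke Proposition~\ref{non_vanish_gen}, which requires the extra input $\cd_{\Q}\Gamma=n-1$ (which you correctly justify via the retraction of $M$ onto its thick part) and then feeds through Theorem~\ref{l_inf_gen}. The paper instead stays inside the relatively hyperbolic framework: it uses the group-pair long exact sequence and Milizia's vanishing of $H^{\ast}_{(\infty)}(\Gamma,\mathcal H;V)$ to obtain a surjection $H^{n-1}_{(\infty)}(\Gamma;\R)\twoheadrightarrow\prod_i H^{n-1}_{(\infty)}(H_i;\R)$ via Lemma~\ref{lem:direct_sum}, and then applies Lemma~\ref{lem:PD} to each $H_i$. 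Your alternative approach~(b) is precisely this. Both routes are valid; yours trades the group-pair machinery for the cohomological-dimension computation, while the paper's avoids computing $\cd_{\Q}\Gamma$ altogether.
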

\begin{proof} We know that
$\Gamma$ is hyperbolic relative to the fundamental groups of the cusps, which are infra-nilmanifolds of dimension $(n-1)$ \cite{farb98}. Therefore, peripheral subgroups are torsion-free virtually nilpotent groups of Hirsch length $(n-1)$. The first claim now follows from 
Corollary \ref{rel_hyp_cd} and Lemma \ref{lem:PD}.

Let $\mathcal H= \{H_i\}_{i\in I}$ denote the collection of the fundamental groups of the cusps. For the second claim, observe that the long exact sequence in group cohomology for the group pair $(\Gamma, \hh)$ with coefficients in $\ell^{\infty}(\G,V)$ gives a surjection from $H^{n-1}_{(\infty)}(\Gamma, \R)$ to $\prod_{i\in I}H^{n-1}(H_i; \ell^{\infty}(\G,\R))$. This in turn surjects onto $\prod_{i\in I}H^{n-1}_{(\infty)}(H_i; \R)$ by Lemma \ref{lem:direct_sum}. By Lemma \ref{lem:PD}, we know that each $H^{n-1}_{(\infty)}(H_i; \R)$ has dimension  at least $2^{2^{|H_i|}}$, hence the claim follows.
\end{proof}

\begin{proof}[Proof of Theorem \ref{intro_rel_hyp_fund_grp}] There is an isomorphism $H^{*}_{(\infty)}(M; V)\cong H^*_{(\infty)}(\pi_1(M); V)$ \cite[Remark 2.3]{Milizia}.
If $M$ is non-compact, then by Corollary \ref{rel_hyp_fund_grp}, we obtain that $H^n_{(\infty)}(M; \R)=0$ and $H^{n-1}_{(\infty)}(M; \R)$ contains a continuum of linearly independent elements. If $M$ is compact, then $\pi_1(M)$ is a Poincar\'{e} duality group. It is non-amenable, since $\pi_1(M)$ is a non-elementary hyperbolic group. By Lemma \ref{lem:PD}, it follows that $H^n_{(\infty)}(M; \R)=0$.
\end{proof}

\begin{cor}\label{quasi_isometric_rel} Let $G$ be a finitely generated  relatively hyperbolic group with $\cd_{\Q}\, G<\infty$. 
Let $\G$ be a finitely generated group with $2\leq \cd_{\Q}\, \G <\infty$. Suppose that $\G$ has an  amenable  Poincar\'{e} duality subgroup of dimension $\cd_{\Q} \,\G$.  If $G$ is quasi-isometric to $\G$, then there is a peripheral subgroup $H$ of $G$ such that $\cd_{\Q} \,H=\cd_{\Q} \,G$.
\end{cor}
\begin{proof} Suppose each peripheral subgroup $H$ of $G$ satisfies $\cd_{\Q} \, H<\cd_{\Q}\, G$. Since rational cohomological dimension is a quasi-isometry invariant amongst group with finite rational cohomological dimension \cite[Theorem 1.2]{sauer}, $n:=\cd_{\Q}\, G=\cd_{\Q}\, \G$. By Corollary \ref{rel_hyp_cd}, $\cdl G\leq n-1$, implying $H^n_{(\infty)}(G;\R)=0$. Since $G$ and $\G$ are  quasi-isometric, we have $H^n_{(\infty)}(\G;\R)=0$. On the other hand, by Proposition \ref{non_vanish_gen}, $H^n_{(\infty)}(\G; \R)\ne 0$, which is a contradiction. 
\end{proof}

\noindent Corollary \ref{quasi_isometric_rel} applies to groups $\G$ satisfying the hypothesis of Corollary \ref{non_vanish_lattice}.  We should mention that a finitely generated group which is quasi-isometric to a finitely generated relatively hyperbolic group is itself relatively hyperbolic \cite{drutu}. It is known that, apart from perhaps some right-angled Artin groups and Bestvina-Brady groups, that the other classes of groups listed in Corollary \ref{intro_non_vanish_lattice} are not relatively hyperbolic. In the case of  right-angled Artin groups, they are non-virtually cyclic relatively hyperbolic if and only if the associated flag complex is disconnected \cite{thickmetricspaces}.

\section{Some questions}\label{sec:questions}

We have only scratched the surface and many questions on the relationship between $\ell^{\infty}$-cohomology of a group and its geometric properties remain unanswered. We state some of these next.

\begin{ques}\label{hyperbolic_embed_question}
Let $H$ be a hyperbolically embedded subgroup of a finitely generated  group $G$. Is $\cdl H\leq \cdl G$?
\end{ques}

\noindent For the definition of a hyperbolically embedded subgroup, we refer to \cite{DGO17}. There are many examples of such subgroups, such as parabolic subgroups of a relatively hyperbolic group; or the fact that a group is acylindrically hyperbolic if and only if it contains a non-trivial hyperbolically embedded virtually cyclic subgroup \cite{Osin_2016}. In the former case, the question has a positive answer by Corollary \ref{rel_hyp_cd}.

\begin{ques}\label{CAT(0)_groups}
Let $G$ be a group acting properly and cocompactly by simplicial isometries on a CAT(0)-simplicial complex $X$. Does $\cdl G$ equal to the maximum dimension of flats in $X$?
\end{ques}

\noindent By \cite[Theorem 7.1]{gersten_lowerbounds}, it follows that $H^2_{(\infty)}(G; \ell^{\infty})\ne 0$ if and only if $X$ contains a flat plane.

\begin{ques}\label{ques:outeraut_RAAGS} Let $G$ be the outer automorphism group of a finitely generated right-angled Artin group with $\vcd \, G=d\geq 2$. Is $H^d_{\infty}(G, \R)\ne 0$?
\end{ques}

\noindent By Corollary \ref{non_vanish_lattice}, we know that Question \ref{ques:outeraut_RAAGS} has a positive answer in the extremal cases when the associated graph is either a set of points or it is complete. In \cite{DaySaleWade}, it is conjectured that $G$ contains a poly-$\Z$ subgroup of Hirsch length equal to $\vcd \, G$. If this conjecture holds, then Question \ref{ques:outeraut_RAAGS} can be answered positively by Proposition \ref{non_vanish_gen}.

\begin{ques}\label{ques:higher_iso_fun} Let $G$ be a group of type $FP_n(\Q)$ for $n\geq 2$.
Is $\cdl G\leq n-1$ if and only if the filling function $FV_{G,\Q}^n$ is bounded above by a linear function?
\end{ques}

\noindent Question \ref{ques:higher_iso_fun} has a positive answer for $n=2$ by Theorems \ref{Gersten_l_hyp} and \ref{thm:answer_AMP}, and for all $n\geq 2$ when $G$ is hyperbolic by \cite{mineyev00}. We note that Question 3.9 of \cite{rational} asks if $FV_{G,\Q}^n(k)<\infty$ for each $k\in \mathbb N$ when $G$ is of type $FP_n(\Q)$.

\bibliographystyle{alpha}
\bibliography{references}

\begin{thebibliography}{BNW12}

\bibitem[AM22]{AscMil}
D.~Ascari and F.~Milizia.
\newblock Weakly bounded cohomology classes and a counterexample to a
  conjecture of {G}romov.
\newblock arXiv:2207.03972, 2022.

\bibitem[AMP21]{rational}
Shivam Arora and Eduardo Martínez-Pedroza.
\newblock Subgroups of word hyperbolic groups in rational dimension 2.
\newblock {\em Groups Geom. Dyn.}, 15(1):83--100, 2021.

\bibitem[BDM09]{thickmetricspaces}
Jason Behrstock, Cornelia Dru\c{t}u, and Lee Mosher.
\newblock Thick metric spaces, relative hyperbolicity, and quasi-isometric
  rigidity.
\newblock {\em Math. Ann.}, 344(3):543--595, 2009.

\bibitem[BFR19]{onerelatorhyperbolic}
Gilbert Baumslag, Benjamin Fine, and Gerhard Rosenberger.
\newblock One-relator groups: an overview.
\newblock In {\em Groups {S}t {A}ndrews 2017 in {B}irmingham}, volume 455 of
  {\em London Math. Soc. Lecture Note Ser.}, pages 119--157. Cambridge Univ.
  Press, Cambridge, 2019.

\bibitem[BH99]{bridsonhaefliger}
Martin~R. Bridson and Andr\'{e} Haefliger.
\newblock {\em Metric spaces of non-positive curvature}, volume 319 of {\em
  Grundlehren der mathematischen Wissenschaften [Fundamental Principles of
  Mathematical Sciences]}.
\newblock Springer-Verlag, Berlin, 1999.

\bibitem[Bie81]{bieri_book}
Robert Bieri.
\newblock {\em Homological dimension of discrete groups}.
\newblock Queen Mary College Mathematics Notes. Queen Mary College, Department
  of Pure Mathematics, London, second edition, 1981.

\bibitem[Bla15]{Blank_thesis}
M.~Blank.
\newblock {\em Relative Bounded Cohomology for Groupoids}.
\newblock PhD thesis. Regensburg University, 2015.
\newblock https://epub.uni-regensburg.de/31298/.

\bibitem[BNW12]{BNW2012}
Jacek Brodzki, Graham~A. Niblo, and Nick Wright.
\newblock Pairings, duality, amenability and bounded cohomology.
\newblock {\em J. Eur. Math. Soc. (JEMS)}, 14(5):1513--1518, 2012.

\bibitem[Bra99]{Brady}
Noel Brady.
\newblock Branched coverings of cubical complexes and subgroups of hyperbolic
  groups.
\newblock {\em J. London Math. Soc. (2)}, 60(2):461--480, 1999.

\bibitem[Bro82]{brown_book}
Kenneth~S. Brown.
\newblock {\em Cohomology of groups}, volume~87 of {\em Graduate Texts in
  Mathematics}.
\newblock Springer-Verlag, New York-Berlin, 1982.

\bibitem[BW92]{BlockWein92}
Jonathan Block and Shmuel Weinberger.
\newblock Aperiodic tilings, positive scalar curvature and amenability of
  spaces.
\newblock {\em J. Amer. Math. Soc.}, 5(4):907--918, 1992.

\bibitem[CV86]{Culler_Vogtmann}
Marc Culler and Karen Vogtmann.
\newblock Moduli of graphs and automorphisms of free groups.
\newblock {\em Invent. Math.}, 84(1):91--119, 1986.

\bibitem[CW15]{cal_wal}
Danny Calegari and Alden Walker.
\newblock Random groups contain surface subgroups.
\newblock {\em J. Amer. Math. Soc.}, 28(2):383--419, 2015.

\bibitem[DGO17]{DGO17}
F.~Dahmani, V.~Guirardel, and D.~Osin.
\newblock Hyperbolically embedded subgroups and rotating families in groups
  acting on hyperbolic spaces.
\newblock {\em Mem. Amer. Math. Soc.}, 245(1156):v+152, 2017.

\bibitem[Dru09]{drutu}
Cornelia Dru\c{t}u.
\newblock Relatively hyperbolic groups: geometry and quasi-isometric
  invariance.
\newblock {\em Comment. Math. Helv.}, 84(3):503--546, 2009.

\bibitem[DSW21]{DaySaleWade}
Matthew~B. Day, Andrew~W. Sale, and Richard~D. Wade.
\newblock Calculating the virtual cohomological dimension of the automorphism
  group of a {RAAG}.
\newblock {\em Bull. Lond. Math. Soc.}, 53(1):259--273, 2021.

\bibitem[Dun79]{Dunwoody}
M.~J. Dunwoody.
\newblock Accessibility and groups of cohomological dimension one.
\newblock {\em Proc. London Math. Soc. (3)}, 38(2):193--215, 1979.

\bibitem[Ele98]{Elek}
G\'{a}bor Elek.
\newblock Coarse cohomology and {$l_p$}-cohomology.
\newblock {\em $K$-Theory}, 13(1):1--22, 1998.

\bibitem[Far98]{farb98}
B.~Farb.
\newblock Relatively hyperbolic groups.
\newblock {\em Geom. Funct. Anal.}, 8(5):810--840, 1998.

\bibitem[Fra18]{Franceschini2018}
Federico Franceschini.
\newblock A characterization of relatively hyperbolic groups via bounded
  cohomology.
\newblock {\em Groups Geom. Dyn.}, 12(3):919--960, 2018.

\bibitem[FS20]{FriSi}
R.~Frigerio and A.~Sisto.
\newblock Central extensions and bounded cohomology.
\newblock arXiv:2003.01146, 2020.

\bibitem[Ger92]{gersten_boundedcocycles}
S.~M. Gersten.
\newblock Bounded cocycles and combings of groups.
\newblock {\em Internat. J. Algebra Comput.}, 2(3):307--326, 1992.

\bibitem[Ger95]{gersten_exotic}
S.~M. Gersten.
\newblock Isoperimetric functions of groups and exotic cohomology.
\newblock In {\em Combinatorial and geometric group theory ({E}dinburgh,
  1993)}, volume 204 of {\em London Math. Soc. Lecture Note Ser.}, pages
  87--104. Cambridge Univ. Press, Cambridge, 1995.

\bibitem[Ger96a]{cohomologicalcharacterisation}
S.~Gersten.
\newblock A cohomological characterization of hyperbolic groups.
\newblock Available online, 1996.

\bibitem[Ger96b]{gersten_note}
S.~M. Gersten.
\newblock A note on cohomological vanishing and the linear isoperimetric
  inequality.
\newblock 1996.
\newblock CiteSeerX: 10.1.1.38.4445.

\bibitem[Ger96c]{Gersten_cd}
S.~M. Gersten.
\newblock Subgroups of word hyperbolic groups in dimension {$2$}.
\newblock {\em J. London Math. Soc. (2)}, 54(2):261--283, 1996.

\bibitem[Ger98]{gersten_lowerbounds}
S.~M. Gersten.
\newblock Cohomological lower bounds for isoperimetric functions on groups.
\newblock {\em Topology}, 37(5):1031--1072, 1998.

\bibitem[Gro93]{Gromov93}
M.~Gromov.
\newblock Asymptotic invariants of infinite groups.
\newblock In {\em Geometric group theory, {V}ol. 2 ({S}ussex, 1991)}, volume
  182 of {\em London Math. Soc. Lecture Note Ser.}, pages 1--295. Cambridge
  Univ. Press, Cambridge, 1993.

\bibitem[Har86]{Harer}
John~L. Harer.
\newblock The virtual cohomological dimension of the mapping class group of an
  orientable surface.
\newblock {\em Invent. Math.}, 84(1):157--176, 1986.

\bibitem[IMM22]{typefsubgroup}
G.~Italiano, B.~Martelli, and Migliorini M.
\newblock Hyperbolic 5-manifolds that fiber over {$S^1$}.
\newblock {\em Invent. math.}, 2022.

\bibitem[JW72]{Johnson_Wall}
F.~E.~A. Johnson and C.~T.~C. Wall.
\newblock On groups satisfying {P}oincar\'{e} duality.
\newblock {\em Ann. of Math. (2)}, 96:592--598, 1972.

\bibitem[JZL23]{onerelatorcoherent}
A.~Jaikin-Zapirain and M.~Linton.
\newblock On the coherence of one-relator groups and their group algebras.
\newblock arXiv:2303.05976, 2023.

\bibitem[KK21]{kielakkropholler}
Dawid Kielak and Peter Kropholler.
\newblock Isoperimetric inequalities for {P}oincar\'{e} duality groups.
\newblock {\em Proc. Amer. Math. Soc.}, 149(11):4685--4698, 2021.

\bibitem[Li18]{Li2018}
Xin Li.
\newblock Dynamic characterizations of quasi-isometry and applications to
  cohomology.
\newblock {\em Algebr. Geom. Topol.}, 18(6):3477--3535, 2018.

\bibitem[LS11]{Leary}
Ian~J. Leary and M\"{u}ge Saadeto\u{g}lu.
\newblock The cohomology of {B}estvina-{B}rady groups.
\newblock {\em Groups Geom. Dyn.}, 5(1):121--138, 2011.

\bibitem[Mil21]{Milizia}
Francesco Milizia.
\newblock {$\ell^\infty$}-cohomology, bounded differential forms and
  isoperimetric inequalities.
\newblock arXiv:2105.14795, 2021.

\bibitem[Min00]{mineyev00}
Igor Mineyev.
\newblock Higher dimensional isoperimetric functions in hyperbolic groups.
\newblock {\em Math. Z.}, 233(2):327--345, 2000.

\bibitem[Min02]{mineyev}
Igor Mineyev.
\newblock Bounded cohomology characterizes hyperbolic groups.
\newblock {\em Q. J. Math.}, 53(1):59--73, 2002.

\bibitem[MP17]{EMP}
Eduardo Mart\'{\i}nez-Pedroza.
\newblock Subgroups of relatively hyperbolic groups of {B}redon cohomological
  dimension 2.
\newblock {\em J. Group Theory}, 20(6):1031--1060, 2017.

\bibitem[Ols91]{Olshanski}
A.~Yu. Olshanski\u{\i}.
\newblock {\em Geometry of defining relations in groups}, volume~70 of {\em
  Mathematics and its Applications (Soviet Series)}.
\newblock Kluwer Academic Publishers Group, Dordrecht, 1991.
\newblock Translated from the 1989 Russian original by Yu. A. Bakhturin.

\bibitem[Osi16]{Osin_2016}
D.~Osin.
\newblock Acylindrically hyperbolic groups.
\newblock {\em Trans. Amer. Math. Soc.}, 368(2):851--888, 2016.

\bibitem[Pap95]{papasoglu}
P.~Papasoglu.
\newblock Strongly geodesically automatic groups are hyperbolic.
\newblock {\em Invent. Math.}, 121(2):323--334, 1995.

\bibitem[Rip82]{Rips}
E.~Rips.
\newblock Subgroups of small cancellation groups.
\newblock {\em Bull. London Math. Soc.}, 14(1):45--47, 1982.

\bibitem[Sau06]{sauer}
R.~Sauer.
\newblock Homological invariants and quasi-isometry.
\newblock {\em Geom. Funct. Anal.}, 16(2):476--515, 2006.

\bibitem[Wei94]{weibel_book}
Charles~A. Weibel.
\newblock {\em An introduction to homological algebra}, volume~38 of {\em
  Cambridge Studies in Advanced Mathematics}.
\newblock Cambridge University Press, Cambridge, 1994.

\bibitem[Wie12]{wienhard}
Anna Wienhard.
\newblock Remarks on and around bounded differential forms.
\newblock {\em Pure Appl. Math. Q.}, 8(2):479--496, 2012.

\bibitem[Wil18]{Wilton}
Henry Wilton.
\newblock Essential surfaces in graph pairs.
\newblock {\em J. Amer. Math. Soc.}, 31(4):893--919, 2018.

\end{thebibliography}

\end{document}